\newenvironment{proof}{\noindent{\sc Proof.}}{\qed}
\newtheorem{theorem}{Theorem}[section]
\newtheorem{lemma}{Lemma}[section]
\newtheorem{cor}{Corollary}[section]
\newtheorem{rem}{Remark}[section]
\newtheorem{definition}{Definition}[section]
\newtheorem{prop}{Proposition}[section]
\newcommand{\qed}{$\blacksquare$}
\def\hindu{\arabic}
\renewcommand{\theequation}{\hindu{section}.\hindu{equation}}
\def\bhag#1{\noindent
\setcounter{equation}{0}
\section{#1}
}
\def\HH{{\mathbb H}}
\def\NN{{\mathbb N}}
\def\RR{{\mathbb R}}
\def\CC{{\mathbb C}}
\def\ZZ{{\mathbb Z}}
\def\SS{{\mathbb S}}
\def\TT{\mathbb T}
\def\a{\alpha}
\def\x{\mathbf{x}}
\def\k{\mathbf{k}}
\def\y{\mathbf{y}}
\def\w{\mathbf{w}}
\def\z{\mathbf{z}}
\def\O{{\cal O}}
\def\C{{\mathcal C}}
\def\be{\begin{equation}}
\def\ee{\end{equation}}
\def\bea{\begin{eqnarray}}
\def\eea{\end{eqnarray}}
\def\eref#1{(\ref{#1})}
\def\disp{\displaystyle}
\def\donchitre#1#2{\vskip 6.5cm\noindent
\parbox[t]{1in}{\special{eps:#1.eps x=6.5cm y=5.5cm}}
\hbox to 7cm{}\parbox[t]{0.0cm}{\special{eps:#2.eps x=6.5cm y=5.5cm}}}
\def\tn{|\!|\!|}
\def\XX{{\mathbb X}}
\def\BB{{\mathbb B}}
\def\gs{\gtrsim}
\def\ls{\lesssim}
\title{Local approximation of operators}
\author{
 H.~N.~Mhaskar\thanks{
Institute of Mathematical Sciences, Claremont Graduate University, Claremont, CA 91711. 
\textsf{email:} hrushikesh.mhaskar@cgu.edu.
The research is supported in part by NSF grant DMS 2012355 and ARO Grant W911NF2110218.}
 }
 \date{}
\begin{document}
\maketitle

\begin{abstract}
Many applications, such as system identification, classification of time series, direct and inverse problems in partial differential equations, and uncertainty quantification lead to the question of approximation of a non-linear operator between metric spaces $\mathfrak{X}$ and $\mathfrak{Y}$. We study the problem of determining the degree of approximation of  such operators on a compact subset $K_\mathfrak{X}\subset \mathfrak{X}$  using a finite amount of information.
If $\mathcal{F}: K_\mathfrak{X}\to K_\mathfrak{Y}$, a well established strategy to approximate $\mathcal{F}(F)$ for some $F\in K_\mathfrak{X}$ is to encode $F$ (respectively, $\mathcal{F}(F)$) in terms of a finite number $d$ (repectively $m$) of real numbers. Together with appropriate reconstruction algorithms (decoders), the problem reduces to the approximation of $m$ functions on a compact subset of a high dimensional Euclidean space $\RR^d$, equivalently, the unit sphere $\SS^d$ embedded in $\RR^{d+1}$. 
The problem is challenging because $d$, $m$, as well as the complexity of the approximation on $\SS^d$ are all large, and it is necessary to estimate the accuracy keeping track of the inter-dependence of all the approximations involved. 
In this paper, we establish constructive methods to do this efficiently; i.e., with the constants involved in the estimates on the approximation on $\SS^d$ being $\O(d^{1/6})$. 
We study different smoothness classes for the operators, and also propose a method for approximation of $\mathcal{F}(F)$ using only information in a small neighborhood of $F$, resulting in an effective reduction in the number of parameters involved. 
To further mitigate the problem of large number of parameters, we propose prefabricated networks, resulting in a substantially smaller number of effective parameters.
The problem is studied in both deterministic and probabilistic settings.
\end{abstract}

\noindent\textbf{Keywords:} Approximation of operators, zonal function networks, approximation on high dimensional spheres.\\[0.5ex]

\noindent\textbf{AMS Classification 2000:} 41A25, 41A63, 42C10.\\

\bhag{Introduction}\label{bhag:introduction}

While much of approximation theory deals with the question of approximation of functions on subsets of a Euclidean space, many applications require an approximation of non-linear functionals and even non-linear operators defined on compact subsets of function spaces. 
We give a few examples here; many more are listed in the references cited in Section~\ref{bhag:relatedwork}.
\begin{enumerate}
\item 
In system identification problems, the hidden states of a non-linear system are not known, but need to be modeled using observations of the input-output relationships of the system (e.g., \cite{narendra1990identification, levin1995identification}). 
Both the input signals and the output signals are functions of time, and the model is thus an unknown non-linear operator which needs to be approximated.
\item
In prediction of time series, a time series $(t_\ell)_{\ell=0}^\infty$ is modeled by a functional relationship
$$
t_{\ell+q}=F(t_\ell, \cdots, t_{\ell+q-1}), \qquad \ell=0,1,\cdots,
$$
 for a judiciously chosen $q$, where $F$ is some possibly non-linear function (e.g., \cite{narendra1990identification, gaussian_diabetes}). 
 In order to use such a model for classification of time series, different time series are modeled by different functions $F$, so that the class label is a non-linear functional on a class of functions.
\item
In the theory of non-linear partial differential equations (PDEs), the mapping from the initial/boundary conditions to the  solution or in the case of inverse problems, from the solution to the initial/boundary conditions or the coefficient functions in the differential equations are all non-linear operators.
\item
In uncertainty quantification problems, the coefficients of the PDEs are random functions of the variables, and one is interested in some quantity of interest (e.g., \cite{cohen2011analytic}). 
Clearly, the quantity of interest is a possibly non-linear functional on the space of functions involved.
\end{enumerate}

An obvious and natural way to solve such problems is to encode the input functions as well as output functions using finitely many parameters, and treat the problem as a problem of approximation of functions between finite dimensional Euclidean spaces. 
For example, a time series can be encoded in a variety of ways, such as thresholded PCA components of  snippets \cite{mason2021manifold}, values of the empirical mode decomposition at certain points \cite{mert2018emotion}, parameters of an ARMA model \cite{chellappa}, etc.
In the context of PDEs, the input and output functions can be encoded using values of the functions involved \cite{lu2019deeponet},  coefficients with respect to certain frames/bases \cite{dung2022analyticity}, random features \cite{kovachki2021neural}, etc.
Clearly, any encoder needs to be associated with a decoder which can approximate the input and output functions well\footnote{An encoder is referred to also as a parameter selection, feature map, or information operator. 
A decoder is referred to also as approximation operator, or reconstruction algorithm.}.
The exact nature of the encoder and decoder need to depend upon the specific application, but
it is  clear intuitively that the number of parameters defined by the encoders must be very large in order for the corresponding decoders to achieve a good approximation. 
Therefore, even though the idea behind the reduction to the problem of operator approximation to that approximation of functions between Euclidean spaces is obvious, the dimensions of these spaces poses a formidable problem.
In this paper, we  assume that the appropriate encoder/decoder pairs have been selected, and focus on the technical problems arising from the high dimensionality of the Euclidean spaces to which the parameters generated/desired by the encoders  belong.
We will discuss the issues involved in Section~\ref{bhag:techintro}.

After pointing out a few papers related to the current paper in Section~\ref{bhag:relatedwork}, we will formalize in Section~\ref{bhag:operatorapprox}  the intuitive thinking just described in an abstract manner.
We will explain there how the problem can be reduced to the problem of efficient approximation of functions on a high dimensional sphere, and highlight the technical contributions of the current paper.
In the rest of the paper, we will focus on approximation on a high dimensional sphere. 
In Section~\ref{bhag:notation}, we will review some preparatory material required both to formulate our results and to prove them.
In Section~\ref{bhag:mainresults}, we will formulate our main theorems about approximation on the sphere.
In Section~\ref{bhag:manifold}, we illustrate an example on the application of our theory for approximation of operators defined on the space of continuous functions on a smooth, compact, Riemannian manifold.
The proofs of all the new results in this paper are given in Section~\ref{bhag:proofs}. In Appendix~\ref{bhag:computation}, we make some comments on a possible computational scheme for the kernels introduced in Section~\ref{bhag:kernels}. 
In Appendix~\ref{bhag:non_chen}, we give another example to illustrate a certain technical point which could not be commented upon in the main part of the paper.

\bhag{Related work}\label{bhag:relatedwork}
An early and widely cited work on the problem of functional/operator approximation is the paper \cite{chen1995universal} by   Chen and Chen. They consider approximation by neural networks for nonlinear operators using what is sometimes called trunk and branch networks. First, one obtains an approximation of $\mathcal{F}(F)(y)\approx\sum_k \beta_k(F)\tau_k(y)$, where $\tau_k$ is a basis for range space of $\mathcal{F}$. The \textit{trunk networks} are neural network approximations to $\tau_k$, and the \textit{branch networks}  approximate the coefficients $\beta_k(F)$. In turn, each branch network is a composition of two networks, one to approximate $F$ and one to approximate $\beta_k$ as a function of the parameters of the network to approximate $F$. 
The authors prove universal approximation theorems for the resulting networks.
While this approach is adequate for universal approximation theorems, it introduces extra error terms due to the approximation of $\mathcal{F}(F)$ in the indicated format (see Appendix~\ref{bhag:non_chen}).

From the point of view of system identification, the problem was studied already in early works of Sandberg \cite{sandberg1991approximation, sandberg1991gaussian}, Modha and Hecht-Nielsen \cite{modha1993multilayer}, Dingankar \cite{dingankar1995applications}, among others.
This work motivated our own work \cite{neursyst}.

In \cite{neursyst}, we have taken a simpler approach than that of \cite{chen1995universal} for the approximation of nonlinear functionals (such as $\beta_k$) on spaces of the form $L^p([-1,1]^s)$. We have constructed networks with a single hidden layer by considering the functional as a function of the coefficients of $F$ in a tensor product Legendre polynomial series. We have proved estimates  on the degree of approximation in terms of the size of the networks, and proved that they are optimal in the sense of non-linear widths. The current paper is a substantial generalization and refinement of this work.
The paper \cite{song2021approximation}  obtains results similar to those in \cite{neursyst} in the case when the activation function is an ReLU function.

The problem of approximation of functions of infinitely many variables, especially on tensor product domains has a long history of research in the information based complexity community - there are too many papers in the Journal of Complexity alone to give a reasonably good bibliography.
A detailed treatment from this point of view can be found in the series of books by Novak and Wo\'zniakowski \cite{novak2008tractability}.
We point out only two recent papers. In
\cite{werschulz2021tractability}, Werschulz and Wo\'zniakowski study the tractability of approximating solutions of Volterra equations in high dimensions.
In
\cite{kritzer2020exponential}, Kritzer, Pillichshammer, and Wo\'zniakowski study the tractability of approximation of operators between tensor product weighted Hilbert spaces.

In the last couple of years, a great deal of interest in this direction is triggered by possible applications to the solutions of direct and inverse problems involving partial differential equations (PDEs). For example, in \cite{li2020fourier, nelsen2021random} the authors introduce the concept of Fourier neural networks, and examime the feature selection and training algorithms for solutions of PDEs.
The paper \cite{kovachki2021neural} establishes a universal approximation theorem for deep networks in the topology of uniform convergence on compact sets using Hilbert space norms. 
This paper has  a long list of related papers and the correspondingly long discussion.
Universal approximation property for deep networks is established also in \cite{kovachki2021universal, deng2021convergence}, where the rates of convergence are studied for special PDEs.
The paper
\cite{lanthaler2021error} follows the approach of Chen and Chen, approximating the functionals $\beta_k(F)$  using values of $F$. Error estimates are given in terms of an appropriate $L^2$ norm. Lower bounds are established on the degree of approximation and it is pointed out that the
 curse of dimensionality is avoided for holomorphic functions and solutions of certain PDEs. 
 We note that every compact set of functions on $\RR^q$ has a non-linear width, depending necessarily on $q$, usually increasing with $q$. 
 In 
\cite{liu2022deep}, the authors give statistical estimates on the error in approximation in the presence of noisy data for the solution of judiciously formulated optimization problems involved in training the networks used for approximation.

\bhag{Approximation of operators}\label{bhag:operatorapprox}
In this section, we wish to formulate the problem of approximation of operators in an abstract setting. 
Before doing so in Section~\ref{bhag:formulation}, we illustrate the main technical problems by means of a simple example in Section~\ref{bhag:techintro}.
After discussing in Section~\ref{bhag:transformation}  the transformation of the problem of approximating functions on $\RR^d$ to that of approximating functions on the unit sphere $\SS^d$ embedded in $\RR^{d+1}$,
we discuss the main technical contributions of the current paper in Section~\ref{bhag:contributions}.

\subsection{An elementary example}\label{bhag:techintro}

The purpose of this section is illustrate the issues that arise in our approach to the approximation of operators.
Although we will strive to keep the notation consistent with the rest of the  paper, the notation here will expire at the end of this subsection.

Let $\TT=\RR/(2\pi\ZZ)$, so that functions on $\TT$ are exactly the $2\pi$-periodic function on $\RR$. 
The  space $L^2$ comprises measurable functions $F : \TT\to \CC$ for which
$$
\|F\|=\left\{\frac{1}{2\pi}\int_{-\pi}^{\pi} |F(t)|^2dt\right\}^{1/2} <\infty,
$$
where two functions are considered equal if they are equal almost everywhere with respect to the Lebesgue measure.
In this example, we are interested in approximating a possibly non-linear  operator $\mathcal{F} :L^2\to L^2$ that satisfies
\be\label{eq:introlip}
\|\mathcal{F}(F_1)-\mathcal{F}(F_2)\|\le \|F_1-F_2\|, \qquad F_1, F_2\in L^2.
\ee
For $F\in L^2$, we define
$$
\hat{F}(k)=\frac{1}{2\pi}\int_{-\pi}^\pi F(t)e^{-ikt}dt, \qquad k\in\ZZ.
$$
For integer $n\ge 1$, the Fourier partial sum operator is defined by
$$
s_n(F)(x)=\sum_{|k|<n}\hat{F}(k)\exp(ikx).
$$
It is well known that $\lim_{n\to\infty}s_n(F)= F$ in $L^2$ so that for every $n$, an information operator (encoder)  is defined by $\mathcal{I}_n(F)=(\hat{F}(k))_{|\k|<n}\in \RR^n$, and the corresponding reconstruction algorithm (decoder) is defined by $\mathcal{A}_n((a_k)_{|k|<n})=\sum_{|k|<n}a_k\exp(ik\circ)$.
In particular, $s_n(F)=\mathcal{A}_n(\mathcal{I}_n(F))$.

Although the operator $\mathcal{F}$ itself is defined on $L^2$, we are interested in approximating it only on the compact subset $K\subset L^2$ comprising $F\in L^2$ for which
\be\label{eq:l2lipnorm}
\|F\|_L=\|F\|+\max_{0<h<1}\frac{\|F(\circ+h)-F(\circ)\|}{h} \le 1.
\ee
It is well known that there exists an absolute constant $c_1$ such that
\be\label{eq:lipdegapprox}
\sup_{F\in K}\|F-s_n(F)\|\le c_1/n, \qquad n\in\NN.
\ee
Since $\mathcal{F}$ is continuous, it maps $K$ into another compact set, say $K_1$. 
Then it is well known that there exists a non-increasing sequence $\{\delta_m\}_{m=1}^\infty$ converging to $0$ as $m\to\infty$ such that
\be\label{eq:compactdegapprox}
\sup_{G\in K_1}\|G-s_m(G)\|\le \delta_m, \qquad m\in\NN.
\ee
Thus, for any $F\in K$, we obtain using \eqref{eq:compactdegapprox}, \eqref{eq:introlip}, and \eqref{eq:lipdegapprox} that
\be\label{eq:triang}
\begin{aligned}
\|\mathcal{F}(F)-s_m(\mathcal{F}(s_n(F)))\|&\le \|\mathcal{F}(F)-s_m(\mathcal{F}(F))\|+\| s_m(\mathcal{F}(F))-s_m(\mathcal{F}(s_n(F)))\|\\
&\le \delta_m+\|\mathcal{F}(F)-\mathcal{F}(s_n(F))\|\le \delta_m+\|F-s_n(F)\|\\
&\le \delta_m+c_1/n.
\end{aligned}
\ee
The estimate \eqref{eq:triang} is entirely due to our set up and well known results on approximation by trigonometric polynomials to the function classes under consideration.
The main problem here to learn $\mathcal{F}$ from a training data comprising a number of functions $F$, more precisely, the features of the functions $F$, given by their Fourier coefficients.
We note that 
$$
s_m(\mathcal{F}(s_n(F)))=\mathcal{A}_m\left(\mathcal{I}_m(\mathcal{F}(\mathcal{A}_n(\mathcal{I}_n(F)))\right).
$$
Out of these, $\mathcal{A}_m$ is ``domain knowledge'', $\mathcal{I}_n(F)\in\RR^{2n-1}$ is the ``input variable''. 
Different points in $\RR^{2n-1}$ will correspond to different functions (actually, trigonometric polynomials).
Thus the problem reduces to approximation of the map from $\RR^{2n-1}\to \RR^{2m-1}$ given by
\be\label{eq:intromainfn}
\mathbf{f}(\mathbf{x})= \mathcal{I}_m(\mathcal{F}(\mathcal{A}_n(\x))).
\ee
We observe that even though $\mathcal{F}(F)$ may not be in $K$, \eqref{eq:introlip} can be used to show that $\mathbf{f}$ is a Lipschitz continuous function. 
Given that for every $F\in K$, $\mathcal{I}_n(F)\in \BB_d$, where $d=2n-1$, and $\BB_d$ is the unit ball of $\RR^d$, we may approximate each component of $\mathbf{f}$ and focus on approximation of complex valued Lipschitz continuous functions on $\BB_d$.
There are many results known for such an approximation using various kind of approximations.
Typically, with an approximation $\mathbb{G}(f)$ of $f$ involving $N$ parameters, one has an estimate of the form
$$
\|f-\mathbb{G}(f)\|\le \frac{c_2(d)}{N^{1/d}}.
$$
Thus, using the $N$ parameter approximation process $\mathbb{G}$ for each component of $\mathbf{f}$, and abusing the notation somewhat, we obtain in the end
\be\label{eq:introfinal}
\|\mathcal{F}(F)-\mathbb{G}(s_n(F))\|\le \delta_m +c/n+\frac{\sqrt{m}c_2(d)}{N^{1/d}}.
\ee
It is clear that to get a good approximation, $m$ and $n$ (and hence, $d=2n-1$), as well as $N$ should be large. 
The first two terms on the right hand side of \eqref{eq:introfinal} are characteristic of our problem. 
The main techincal difficulty is to control the last term.
Here, the factor $\sqrt{m}$ comes because the process $\mathbb{G}$ is applied to each component of $\mathbf{f}$, and an estimate in the sense of $L^2$ norm is desired.
We don't consider this to be a major technical problem - it is only a question of the norm involved.
Under the assumption of Lipschitz continuity alone,  the term $N^{-1/d}$ is unimprovable as well.
So, the main technical challenge is how to control $c_2(d)$ as a function of $d$.
The other technical challenge is how to work efficiently with the large number of input variables and the parameters in the process $\mathbb{G}$; in particular, how to reduce the space and time complexity of the process.

In this paper, we will address these issues as follows.
First, unlike the currently used approaches, we will separate the pre-processing and post-processing steps involved in the construction of information operators (encoders) and reconstruction algorithms (decoders) from the main technically challenging question of approximating the components of $\mathbf{f}$ as in \eqref{eq:intromainfn}.
Second, we will transform this problem into a problem of approximation on the unit  sphere in $\RR^{d+1}$.
Then we will define higher smoothness conditions on $\mathcal{F}$ (equivalently, components of $\mathbf{f}$) to improve upon the factor $N^{-1/d}$.
Again, the central technical problem is to define the smoothness as well as the operators so as keep the constant $c_2(d)$ dependent sub-polynomially on $d$.
Finally, we design our approximation process so that in approximating $\mathbf{f}(\x)$ for some point $\x$ (equivalently, an input function $F$), we need to utilize only those points which are in a small neighborhood of $\x$ (equivalently, input functions in a small neighborhood of $F$). 

\subsection{Problem formulation}\label{bhag:formulation}
Let $(\mathfrak{X},\rho_\mathfrak{X})$, $(\mathfrak{Y},\rho_\mathfrak{Y})$ be metric spaces, $K_{\mathfrak{X}}$, (respectively, $K_{\mathfrak{Y}}$) be a compact subset of $\mathfrak{X}$ (repsectively, $\mathfrak{Y}$), $\mathcal{F}:K_\mathfrak{X}\to K_{\mathfrak{Y}}$ be a continuous function.
The goal is to approximate $\mathcal{F}$, with estimates on the accuracy in approximation.
The idea is the following.

In the theory of optimal recovery \cite{micchelli1985lectures, micchelli1977survey, devore1989optimal}, it is customary to define an approximation as a composition $\mathcal{A}_{d,K_{\mathfrak{X}}}\circ \mathcal{I}_{d,K_{\mathfrak{X}}}$ of two (possibly nonlinear) operators: The \textit{information operator} (\textit{encoder}) $\mathcal{I}_{d,K_{\mathfrak{X}}}:K_{\mathfrak{X}}\to \RR^d$, and the \textit{(reconstruction) algorithm} (\textit{decoder}) $\mathcal{A}_{d,K_{\mathfrak{X}}} :\RR^d\to \mathfrak{X}$. 

The worst case error with these operators is defined to be
\be\label{eq:worst_error}
\mathsf{wor}(K_{\mathfrak{X}};\mathcal{A}_{d,K_{\mathfrak{X}}}, \mathcal{I}_{d,K_{\mathfrak{X}}})=\sup_{F\in K_{\mathfrak{X}}}\rho_\mathfrak{X}\bigg(F, \mathcal{A}_{d,K_{\mathfrak{X}}}\big(\mathcal{I}_{d,K_{\mathfrak{X}}}(F)\big)\bigg).
\ee
The best choice for $\mathcal{I}_{d,K_{\mathfrak{X}}}$, $\mathcal{A}_{d,K_{\mathfrak{X}}}$ is described by
the non-linear $d$-width of $K_{\mathfrak{X}}$ defined by
\be\label{eq:mwidth}
\mathsf{width}_{d,\mathfrak{X}}(K_{\mathfrak{X}})=\inf_{\mathcal{I}_{d,K_{\mathfrak{X}}}, \mathcal{A}_{d,K_{\mathfrak{X}}}}\mathsf{wor}(K_{\mathfrak{X}};\mathcal{A}_{d,K_{\mathfrak{X}}}, \mathcal{I}_{d,K_{\mathfrak{X}}}),
\ee
where the infimum is over \textbf{all continuous} information operators and \textbf{all} algorithms.

In our theory, we assume that an appropriate (if not optimal) choice of the operators $\mathcal{I}_{d,K_{\mathfrak{X}}}$, $\mathcal{A}_{d,K_{\mathfrak{X}}}$, $\mathcal{I}_{m,K_{\mathfrak{Y}}}$, $\mathcal{A}_{m,K_{\mathfrak{Y}}}$ is already made based on domain knoweldge.
The problem of approximating $\mathcal{F}(F)$ for any $F\in K_{\mathfrak{X}}$ is thus reduced to the problem of approximating  $m$ functions of the form $f_j :\RR^d \to \RR$ so that $f_j(\mathcal{I}_{d,K_{\mathfrak{X}}}(F))$ is  the $j$-th component of 
$\disp\mathcal{I}_{m,K_{\mathfrak{Y}}}\left(\mathcal{F}(\mathcal{A}_{d,K_{\mathfrak{X}}}\left(\mathcal{I}_{d,K_{\mathfrak{X}}}(F)\right))\right)$.
This idea is presented pictorially in Figure~\ref{fig:oper_approx}.
\begin{figure}[h]
\begin{center}
\includegraphics[width=0.7\textwidth,keepaspectratio]{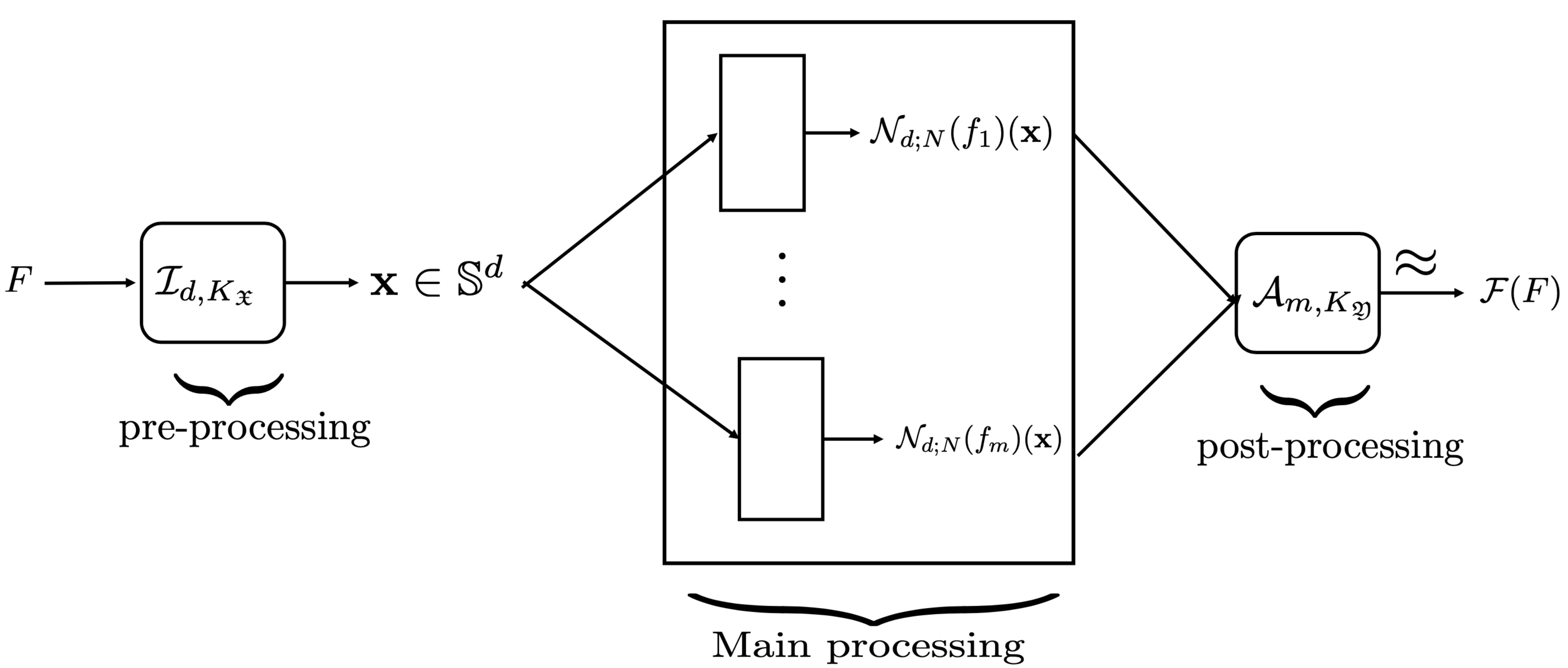} 
\end{center}
\caption{Schematics for operator approximation, see text for the explaination of symbols. The output of the box labeled $\mathcal{A}_{m,K_{\mathfrak{Y}}}$ is actually an approximation to $\mathcal{F}(F)$ as indicated in \eqref{eq:protoestimate}.}
\label{fig:oper_approx}
\end{figure}

To illustrate the error estimate in the approximation of $\mathcal{F}$ while keeping the notation relatively simple, let us assume that $\mathcal{F}$, $\mathcal{I}_{d,K_{\mathfrak{X}}}$, $\mathcal{A}_{d,K_{\mathfrak{X}}}$, $\mathcal{I}_{m,K_{\mathfrak{Y}}}$, $\mathcal{A}_{m,K_{\mathfrak{Y}}}$ are all Lipschitz continuous. Because of continuity and compactness,   there exists a compact subset $K_S\subset C_0(\RR^d)$ such that each $f_j\in K_S$. 
We emphasize that $K_S$ need not consist of the possible $f_j$'s alone; the only requirement is that they all be in $K_S$.
Suppose we find an approximation operator $\mathbb{G}_{d;N}: K_S\to C_0(\RR^d)$ depending upon $N$ parameters such that
\be\label{eq:sphwidth}
\sup_{f\in K_S}\|f-\mathbb{G}_{d;N}(f)\|_\infty \le \epsilon_{d,N},
\ee
then it is easy to deduce that
\be\label{eq:protoestimate}
\begin{aligned}
\sup_{F\in K_{\mathfrak{X}}}\rho_\mathfrak{Y}\bigg(\mathcal{F}(F), \mathcal{A}_{m,K_{\mathfrak{Y}}}&\big(\{\left(\mathbb{G}_{d;N}(f_j)(\mathcal{I}_{d,K_{\mathfrak{X}}}(F))\right)\}\big)\bigg)\\
&\le L\left\{\mathsf{wor}(K_{\mathfrak{X}};\mathcal{A}_{d,K_{\mathfrak{X}}}, \mathcal{I}_{d,K_{\mathfrak{X}}})+\epsilon_{d,N}+\mathsf{wor}(K_{\mathfrak{Y}};\mathcal{A}_{m,K_{\mathfrak{Y}}}, \mathcal{I}_{m,K_{\mathfrak{Y}}})\right\},
\end{aligned}
\ee
where $L$ depends upon the various Lipschitz constants involved.
A crude estimate of the number of parameters in the process is $dNm$.

\subsection{A transformation}\label{bhag:transformation}

In \eqref{eq:protoestimate}, the two worst case errors come from the application domain and the choice of $\mathcal{I}_{d,K_{\mathfrak{X}}}$, $\mathcal{A}_{d,K_{\mathfrak{X}}}$, $\mathcal{I}_{m,K_{\mathfrak{Y}}}$, $\mathcal{A}_{m,K_{\mathfrak{Y}}}$
In order to make the two $\mathsf{wor}$ expressions small, one needs to make $d$, $m$ large.
To keep $\epsilon_{d,N}$ under control, one has to make $N$ large also.
Thus, the problem involves an approximation of a large number $m$ of functions on a space of a large dimension $d$ by a class with high complexity $N$.
It is clear that the bottleneck is the estimation of $\epsilon_{d,N}$, so that it is critical to investigate the dependence of $\epsilon_{d,N}$ on both $d$ and $N$.

Before describing the contributions of our paper, we find it convenient to make a transformation of this problem.
For integer $n\ge 1$, we use the notation $\mathbf{x}=(x_1,\cdots, x_n)\in \RR^n$, 
$$
|\mathbf{x}|_n =\left(\sum_{k=1}^n x_k^2\right)^{1/2}.
$$
We map $\RR^d$ to the unit sphere:
$$
\SS^d=\{\mathbf{x}\in\RR^{d+1} : |\mathbf{x}|_{d+1}=1\}, 
$$
and its upper hemisphere:
$$
\SS^d_+=\{\x\in \SS^d : x_{d+1}>0\}.
$$
One of the reasons for this transformation is that the compactness of $\SS^d$ simplifies the analysis rather than using the compact open topology on $\RR^d$ directly.
We consider the mapping $\pi^* : \RR^d\to\SS^d_+$
 given by
\be\label{eq:euclid_to_sphere}
\pi^*(x_1,\cdots,x_d)= \left(\frac{x_1}{\sqrt{1+|\x|_d^2}},\cdots,\frac{x_d}{\sqrt{1+|\x|_d^2}}, \frac{1}{\sqrt{1+|\x|_d^2}}\right).
\ee
We note that 
\be\label{eq:sphere_to_euclid}
(\pi^*)^{-1}(u_1,\cdots,u_{d+1})=\left(\frac{u_1}{u_{d+1}}, \cdots, \frac{u_d}{u_{d+1}}\right).
\ee
The Frobenius norms of the Jacobians $d\pi^*$ and $d(\pi*)^{-1}$ are given by
\be\label{eq:frobenius}
\|d\pi^*\|_F^2= \frac{d+(d-1)|\x|_d^2}{(1+|\x|_d^2)^2}, \quad \|d(\pi*)^{-1}\|_F^2=\frac{d-1}{u_{d+1}^2}+\frac{|(u_1,\cdots,u_{d+1})|_{d+1}^2}{u_{d+1}^2}.
\ee
We note two consequences of \eqref{eq:frobenius}. 

First, it is elementary calculus to verify using \eqref{eq:frobenius}  that for any compact set $K\subset \RR^d$, there exist positive  constants $c_1(d,K)$, $c_2(d,K)$ such that
$$
c_1(d,K)|\pi^*(\x)-\pi^*(\y)|_{d+1}\le |\x-\y|_d\le c_2(d,K)|\pi^*(\x)-\pi^*(\y)|_{d+1}, \qquad \x,\y\in K.
$$
We note that the dependence on $d$ of $c_1(d,K)$, $c_2(d,K)$ is $\sim d^{1/2}$.

Second, $\pi^*$ is a diffeomorphism from $\RR^d$ to $\SS^d_+$. 
Continuous functions on $\RR^d$ vanishing at infinity are mapped to continuous, even functions on $\SS^d$, preserving smoothness. 
In particular, there is no loss of generality or smoothness of the functions involved if the operator $\mathcal{I}_{d;K_\mathfrak{X}}$ takes values in $\SS^d$ rather than $\RR^d$.
Thus, the problem of approximating a function of $\mathcal{I}_{d;K_\mathfrak{X}}(F)$ may be transformed into the problem of approximation of a function on $\SS^d$.

Similar transformations from a Euclidean ball to the sphere are well known in approximation theory (e.g., \cite{dai2013approximation, michel2012lectures}). We have only given one example of how such a transformation can be made in general.
As argued in \cite{dingxuanpap}, the transformation simplifies the study of neural networks with ReLU activation function by representing them as zonal function networks instead. 
(A zonal function network is a function of the form $\x\mapsto \sum_{k=1}^n a_k\phi(\w_k\cdot\x)$, $\w_k, \x\in\SS^d$, $a_k\in\RR$.)

For approximation on the sphere, spherical polynomials are the most natural class of approximants.
In several papers \cite{mnw2, zfquadpap, sphrelu}, we have described how spherical polynomials can be synthesized as zonal function networks with fixed weights independent of the function being approximated (target function). 
Our zonal function networks are linear operators on $C(\SS^d)$, and can be written as linear combination of pre-fabricated networks with coefficients given by the values of the target function at scattered data on the sphere.
This feature is extremely important in our current problem where a lot of functions need to be approximated. 
It ensures that the networks don't need to be trained separately for each function as is done, e.g., in \cite{lu2019deeponet}.
Given the close connection between zonal function networks and spherical polynomials already established in our  previous papers, we will focus in this  paper on approximation by spherical polynomials, mainly with the objective of dealing with the trade-off mentioned at the beginning of this sub-section.

\subsection{Contributions of this paper}\label{bhag:contributions}

We highlight some of the main contributions of this paper.
\begin{enumerate}
\item Our construction in the ``main processing'' step in Figure~\ref{fig:oper_approx} involves  pre-fabricated ``networks'' $\mathbb{G}_{d;N}$ using $N$ parameters in each sub-box of that box, so that the entire process involves $mN$ pre-computed parameters.
There is no training involved in the traditional sense.
The actual computation of $\mathcal{F}(F)$ then involves only $m$ matrix vector multiplications.
Thus, the total number of parameters is $(d+N)m$ rather than $dNM$. 
\item Unlike most other papers on the subject of approximation of operators, we provide error estimates in the supremum norm rather than a Hilbert space norm, and in fact, provide pointwise estimates in both deterministic and probabilistic sense (rather than in the sense of an expected value of a loss function). 
\item While approximation on the sphere is very well studied, the error bounds for approximation of smooth target functions typically involve unspecified constants depending upon the dimension of the sphere. 
The known examples where the constants can be computed explicitly involve constants that grow exponentially with the dimension (e.g., \cite{frankgreek, mhaskar2016localized}).
Part of the  problem is in the definition of the right smoothness classes.
We define a  smoothness class that is a natural generalization of the notion of Lipschitz continuity, and prove approximation results with constants of the form $d^{1/6}$. In \cite{newman1964jackson}, Newman and Shapiro have given bounds with constants that are independent of the dimension, but these hold only for Lipschitz continuous functions. 
We could not find a reference where similar bounds are achieved for functions with higher smoothness.
\item We achieve approximation using linear operators for which we give explicit constructions using either values of the target function at arbitrary locations on $\SS^d$ (scattered data, in contrast to designated locations such as the points at which special quadrature formulas such as Driscoll-Healy hold), or Fourier-Laplace coefficients of $f$.
\item We prove two kinds of local approximation results.
\begin{enumerate}
\item One is that the approximation of $f\in C(\SS^d)$ near a point $\x\in\SS^d$ involves only the values of $f$ in a small neighborhood of $\x$. 
This mitigates the effect of requiring a large number of functions in $K_\mathfrak{X}$ to obtain an approximation to $\mathcal{F}(F)$ even for a single function $F$.
\item The second aspect is to construct a globally defined operator for approximation with the property that the degree of approximation at any point adjusts \textbf{automatically} according to the smoothness of the target function at that point.
\end{enumerate}
\item The ideas in the paper can be extended in various ways.
\begin{enumerate}
\item Since $K_\mathfrak{X}$ is a compact set, for every $\epsilon>0$, there exists a finite cover of the set with finitely many balls of radius $<\epsilon$. 
Denoting the centers of this ball by $F_\ell$,
a possibly time consuming approximation of $\mathcal{F}(F_\ell)$ at the centers of each of these balls would give us a table so that $\mathcal{F}(F)$ can be computed using a table look-up, where we find $F_\ell$ close to $F$ and return the value of $\mathcal{F}(F_\ell)$ as an approximation to $\mathcal{F}(F)$.
\item It is possible to iterate this paradigm somewhat trivially to the case when one wants to approximate an operator acting on the space of operators, and so on.
\end{enumerate}
\end{enumerate}

\bhag{Preparatory material}\label{bhag:notation}
 In order to state our main results in Section~\ref{bhag:mainresults}, we need to describe some background on approximation on the sphere.
It is necessary for our proofs also to introduce some basic concepts about Jacobi polynomials. 
Section~\ref{bhag:jacobiprep} discusses some elementary facts about Jacobi polynomials.
In Section~\ref{bhag:sphpoly}, we introduce some basic facts about the sphere, including spherical polynomials and their connection with Jacobi polynomials.
An essential role in our theory is played by quadrature formulas on the sphere, which are discussed in Section~\ref{bhag:quadrature}. 
The kernels and operators which are used in our constructions are described in Section~\ref{bhag:kernels}.

 \subsection{Jacobi polynomials}\label{bhag:jacobiprep}
A standard reference for the material here is the book \cite{szego} of Szeg\"o. For $\alpha, \beta>-1$, $x\in (-1,1)$ and integer $\ell\ge 0$, the Jacobi polynomials $p_\ell^{(\alpha,\beta)}$ are defined by the Rodrigues' formula \cite[Formulas~(4.3.1), (4.3.4)]{szego}
\be\label{eq:rodrigues}
(1-x)^\alpha(1+x)^\beta p_\ell^{(\alpha,\beta)}(x)=\left\{\frac{2\ell+\alpha+\beta+1}{2^{\alpha+\beta+1}}\frac{\Gamma(\ell+1)\Gamma(\ell+\alpha+\beta+1)}{\Gamma(\ell+\alpha+1)\Gamma(\ell+\beta+1)}\right\}^{1/2}\frac{(-1)^\ell}{2^\ell \ell!}\frac{d^\ell}{dx^\ell}\left((1-x)^{\ell+\alpha}(1+x)^{\ell+\beta}\right).
\ee

Each $p_\ell^{(\alpha,\beta)}$ is a polynomial of degree $\ell$ with positive leading coefficient.  We have the orthogonality relation
\be\label{eq:jacobiortho}
\int_{-1}^1 p_\ell^{(\alpha,\beta)}(x)p_j^{(\alpha,\beta)}(x)(1-x)^\alpha(1+x)^\beta dx=\delta_{\ell,j},
\ee
and
\be\label{eq:pkat1}
\begin{aligned}
p_\ell^{(\alpha,\beta)}(1)&=\left\{\frac{2\ell+\alpha+\beta+1}{2^{\alpha+\beta+1}}\frac{\Gamma(\ell+1)\Gamma(\ell+\alpha+\beta+1)}{\Gamma(\ell+\alpha+1)\Gamma(\ell+\beta+1)}\right\}^{1/2}\frac{\Gamma(\ell+\alpha+1)}{\Gamma(\alpha+1)\Gamma(\ell+1)}\\
&=\left\{\frac{2\ell+\alpha+\beta+1}{2^{\alpha+\beta+1}}\frac{\Gamma(\ell+\alpha+1)\Gamma(\ell+\alpha+\beta+1)}{\Gamma(\ell+1)\Gamma(\ell+\beta+1)}\right\}^{1/2}\frac{1}{\Gamma(\alpha+1)} .
\end{aligned}
\ee
We have (\cite[Formula~(4.5.3), (4.3.4)]{szego}, \cite[Formula~(41), (42)]{mhaskar2016localized})
\be\label{eq:jacobikern}
K_n^{(\alpha,\beta)}(x)=\sum_{\ell=0}^{n-1} p_\ell^{(\alpha,\beta)}(1)p_\ell^{(\alpha,\beta)}(x)= \frac{2\alpha+2}{2n+\alpha+\beta}p_{n-1}^{(\alpha+1,\beta)}(1) p_{n-1}^{(\alpha+1,\beta)}(x),
\ee
and
\be\label{eq:jacobikernat1}
K_n^{(\alpha,\beta)}(1)=\sum_{\ell=0}^{n-1} p_\ell^{(\alpha,\beta)}(1)^2=\frac{1}{2^{\a+\beta+1}\Gamma(\a+1)\Gamma(\a+2)}\frac{\Gamma(n+\a+1)\Gamma(n+\a+\beta+1)}
{\Gamma(n)\Gamma(n+\beta)}
\ee
Clearly, for any polynomial $P$ of degree $<n$, 
\be\label{eq:jacobireprod}
\int_{-1}^1 P(x)K_n^{(\alpha,\beta)}(x)(1-x)^\alpha(1+x)^\beta dx =P(1),
\ee
and in particular,
\be\label{eq:jacobikernsq}
\int_{-1}^1 \left(K_n^{(\alpha,\beta)}(x)\right)^2(1-x)^\alpha(1+x)^\beta dx = K_n^{(\alpha,\beta)}(1).
\ee
 \subsection{Spherical polynomials}\label{bhag:sphpoly}
Most of the following information is based
on \cite{mullerbk}, \cite[Section~IV.2]{steinweissbk}, and \cite[Chapter XI]{batemanvol2}, although we use a
different notation.
 Let $d\ge 1$ be an integer, $\SS^d$ denote the unit sphere
\be\label{eq:unitsphere}
\SS^d=\{\x=(x_1,\cdots,x_{d+1}) : |\x|_{d+1}^2=x_1^2+\cdots+x_{d+1}^2 =1\}.
\ee
For $\delta\in (0,2]$, let
\be\label{eq:balldef}
\BB(\x,\delta)=\{\y\in\SS^d : |\x-\y|_{d+1}\le \delta\}.
\ee 
Let $\mu_d^*$ be the Riemannian volume measure on $\SS^d$, normalized so that $\mu_d^*(\SS^d)=1$.
We note that the volume of $\SS^d$ itself is given  by
\be\label{eq:sphvol}
\omega_d=\frac{2\pi^{(d+1)/2}}{\Gamma((d+1)/2)}=\frac{\sqrt{\pi}\Gamma(d/2)}{\Gamma((d+1)/2)}\omega_{d-1}=\begin{cases}
\disp\omega_{d-1}\int_{-1}^1 (1-x^2)^{d/2-1}dx, &\mbox{if $d\ge 2$,}\\[1ex]
2\pi, &\mbox{if $d=1$,}
\end{cases}
\ee
and the measure $\mu_d^*$ is defined recursively by
\be\label{eq:sphmeasuredef}
d\mu_d^*=
\begin{cases}
\disp\frac{\omega_{d-1}}{\omega_d}d\mu_{d-1}^*(\x')\sin^{d-1}\theta d\theta, \qquad \x=(\x'\sin\theta,\cos\theta)\in \SS^d, \quad \x'\in\SS^{d-1}, &\mbox{ if $d\ge 2$},\\[1.5ex]
\disp\frac{1}{2\pi} d\theta, \qquad \x=(\sin\theta, \cos\theta)\in\SS^1, &\mbox{ if $d=1$}.
\end{cases}
\ee
The measure $\mu_d^*$ is rotation invariant.  Therefore, it is easy to verify that if $f :[-1,1]\to\RR$ and $t\mapsto f(t)(1-t^2)^{d/2-1}$ is integrable with respect to the Lebesgue measure on $[-1,1]$,  then
\be\label{eq:zonalintegral}
\int_{\SS^d}f(\x\cdot\y)d\mu_d^*(\y)= \frac{\omega_{d-1}}{\omega_d}\int_{-1}^1 f(t)(1-t^2)^{d/2-1}(t)dt, \qquad \x\in\SS^d.
\ee

For a fixed integer $\ell\ge 0$, the restriction to $\SS^d$ of a
homogeneous harmonic polynomial of exact degree $\ell$ is called a spherical
harmonic of degree $\ell$. 
The class of all spherical harmonics of degree
$\ell$ will be denoted by $\HH^d_\ell$. The
spaces $\HH^d_\ell$ are mutually orthogonal relative to
the inner product of $L^2(\mu_d^*)$. 
An orthonormal basis for $\HH^d_\ell$ is $\{Y_{\ell,k}\}_{k=1,\cdots, \mathsf{dim}(\HH^d_\ell)}$.

One has the
well-known addition formula \cite{mullerbk} and \cite[Chapter XI, Theorem 4]{batemanvol2} connecting $Y_{\ell,k}$'s with Jacobi polynomials defined in \eref{eq:rodrigues}:
\begin{equation}
\label{eq:addformula}
 \sum_{k=1}^{\mathsf{dim}(\HH^d_\ell)} Y_{\ell,k}(\x)\overline{Y_{\ell,k}(\y)} =
\frac{\omega_d}{\omega_{d-1}} p_\ell^{(d/2-1,d/2-1)}(1)p_\ell^{(d/2-1,d/2-1)}(\x\cdot\y), \qquad
\ell=0,1,\cdots.
\end{equation}
For $n\ge 0$, we denote by $\Pi_n^d$ the set of restrictions to $\SS^d$ of all algebraic polynomials of degree $<n$.
In this definition, we allow $n$ to be a non-integer, so as to be able to write, for example, $\Pi_{n/2}^d$ rather than the more cumbersome $\Pi_{\lfloor n/2\rfloor}^d$.
For integers $n, \ell, d\ge 1$,  we have 
\be\label{eq:dimpin}
\mathsf{dim}(\Pi_n^d)=\begin{cases}
\disp(2n+d-2)\frac{\Gamma(n+d-1)}{\Gamma(n)\Gamma(d+1)} & \mbox{ if $n\ge 2$},\\[1ex]
1, & \mbox{ if $n=1$,}
\end{cases}
\qquad
\mathsf{dim}(\HH^d_\ell)=
\begin{cases}
\mathsf{dim}(\Pi_\ell^{d-1}) & \mbox{ if $d\ge 2$},\\[1ex]
2 &\mbox{ if $d=1$.}
\end{cases}
\ee
In view of \eqref{eq:addformula},
the reproducing kernel $K_{d;n}$ for $\Pi_n^d$ is defined using Jacobi polynomials by
\be\label{eq:sphreprodkern}
K_{d;n}(x)=\frac{\omega_d}{\omega_{d-1}}K_n^{(d/2-1,d/2-1)}(x)=\frac{2\sqrt{\pi}\Gamma((d+2)/2)}{\Gamma((d+1)/2)(2n+d-2)}p_{n-1}^{(d/2,d/2-1)}(1)p_{n-1}^{(d/2,d/2-1)}(x), \qquad x\in [-1,1].
\ee
Thus, we have the reproduction formula
\be\label{eq:sphreprod}
P(\x)=\int_{\SS^d}P(\y)K_{d;n}(\x\cdot\y)d\mu_d^*(\y), \qquad P\in \Pi_n^d.
\ee
Using \eqref{eq:sphreprod} with $K_{d;n}(\x\cdot\y)$ in place of $P$, and using \eqref{eq:addformula}, we deduce that
\be\label{eq:sphreprodl2}
\mathsf{dim}(\Pi_n^d)=K_{d;n}(1)=\int_{\SS^d}K_{d;n}(\x\cdot\y)^2d\mu_d^*(\y), \qquad \x\in\SS^d.
\ee

\subsection{Quadrature formula}\label{bhag:quadrature}
\begin{definition}\label{def:quadmeasure}
Let $n\ge 1$. A  measure $\nu$ on $\SS^d$ is called a \textbf{quadrature measure of order $n$} if for every $P\in\Pi_n^d$,
\be\label{eq:quadrature}
\int_{\SS^d} Pd\nu =\int_{\SS^d}Pd\mu^*. 
\ee
A measure $\nu$ is called a \textbf{Marcinkiewicz-Zygmund  measure of order $n$} (abbreviated by $\nu\in \mathsf{MZ}(d;n)$) if  for every $P\in\Pi_{n/2}^d$:
\be\label{eq:mzineq}
 \int_{\SS^d} |P|^2d|\nu| \le c\int_{\SS^d}|P|^2d\mu^*,
\ee
for some positive constant $c$. The infimum of all such constants will be denoted by $\tn\nu\tn_{d;n}$.
A measure $\nu$ is called a \textbf{Marcinkiewicz-Zygmund quadrature measure of order $n$} (abbreviated by $\nu\in \mathsf{MZQ}(d;n)$) if both \eqref{eq:quadrature} and \eqref{eq:mzineq} hold for every $P\in \Pi_n^d$.
\end{definition}

\begin{rem}\label{rem:reconciliate}
{\rm
This definition is essentially a special case of the definition given in Section~\ref{bhag:approx_manifold}. 
In \cite{modlpmz}, we have proved that the condition \eqref{eq:mzineq} is equivalent to the same condition with the $L^2$ norm replaced by the $L^1$ norm. 
However, the constants there depend upon the dimension of the manifold (sphere in this context). 
The use of $L^2$ norm, and requiring \eqref{eq:mzineq} to hold for $\Pi_{n/2}$ instead of $\Pi_n$ in the definition allows us to use positive quadrature formulas such as the one described in Theorem~\ref{theo:tchakalofftheo} below directly.
\qed}
\end{rem}
\begin{rem}\label{rem:monotonicity}
{\rm
It is clear that if $n<m$ then a quadrature measure (respectively, Marcinkiewicz-Zygmund measure, respectively, Marcinkiewicz-Zygmund quadrature measure) of order $m$ is also a quadrature measure (respectively, Marcinkiewicz-Zygmund measure, respectively, Marcinkiewicz-Zygmund quadrature measure) of order $n$, and $\tn \nu\tn_{d;n}\le \tn \nu\tn_{d;m}$.
\qed}
\end{rem}

\begin{rem}\label{rem:quadrature}
{\rm
In \cite{mnw1,modlpmz}, we have proved if $\C\subset\SS^d$ is any finite set, there exist  constant $c_1, c_2>0$ depending on $d$ with the following property: if
$$
\sup_{\x\in\SS^d}\min_{\y\in \C}|\x-\y|\le c_1/n,
$$
then there exists $\nu\in \mathsf{MZQ}(d;n)$ such that $\mathsf{supp}(\nu)$ is a subset of $\C$ containing at most $c_2\mathsf{dim}(\Pi_n^d)$ points. 
Existence of a positive measure $\nu\in\mathsf{MZQ}(d;n)$ is also proved in the same papers under the same conditions except for a smaller value of $c_1$.
In 
 \cite{mhaskar2020kernel}, we have estimated the cardinality of a random sample $\C$ that allows the condition mentioned above to be within a logarithmic multiple of $\mathsf{dim}(\Pi_n^d)$. 
 The various constants in all these constructions depend upon $d$ in an unspecified manner.
\qed}
\end{rem}

\begin{rem}\label{rem:positivequadrature}
{\rm
It is clear that if $\nu$ is a positive measure satisfying \eqref{eq:quadrature} for all $P\in\Pi_n^d$, then it satisfies \eqref{eq:mzineq} automatically for $P\in \Pi_{n/2}^d$, so that $\nu\in \mathsf{MZQ}(d;n)$ with $\tn\nu\tn_{d;n}=1$. 
\qed}
\end{rem}

\begin{rem}\label{rem:tchakaloff}
{\rm
We note the following theorem, called Tchakaloff's theorem \cite[Exercise~2.5.8, p.~100]{rivlin1974chebyshev}, that asserts in particular the existence of a positive quadrature formula satisfying \eqref{eq:quadrature} for all $P\in\Pi_n^d$ based on exactly $\mathsf{dim}(\Pi_n^d)$ points.
\begin{theorem}\label{theo:tchakalofftheo}
Let $\mathbb{X}$ be a compact topological space, $\{\phi_j\}_{j=0}^{N-1}$ be continuous real valued functions on $\mathbb{X}$, $\phi_0\equiv 1$, and $\mu^*$ be a probability measure on $\mathbb{X}$ (i.e., $\mu^*$ is a positive Borel measure with $\mu^*(\mathbb{X})=1$). Then there exist $N$ points $x_1,\cdots,x_N$ in $\XX$, and non--negative numbers $w_1,\cdots,w_N$ such that
\begin{equation}\label{tchakaloffquad}
 \sum_{k=1}^N w_k\phi_j(x_k)=\int_\mathbb{X} \phi_j(x)d\mu^*(x), \qquad j=0,\cdots,N-1.
\end{equation}
\end{theorem}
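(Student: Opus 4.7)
The plan is to translate Theorem~\ref{theo:tchakalofftheo} into a geometric statement in $\RR^N$ and deduce it from Carath\'eodory's theorem. Define the continuous map $\Phi:\XX\to\RR^N$ by $\Phi(x)=(\phi_0(x),\phi_1(x),\ldots,\phi_{N-1}(x))$, and let $v=\int_\XX \Phi(x)\,d\mu^*(x)\in\RR^N$, where the integral is taken componentwise. The assumption $\phi_0\equiv 1$ forces both $v$ and every $\Phi(x)$ to lie in the affine hyperplane $H=\{u\in\RR^N:u_0=1\}$, which has affine dimension $N-1$. The conclusion of the theorem is exactly the assertion that $v$ is a convex combination of at most $N$ points of $\Phi(\XX)$.

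The first step is to show that $v\in\mathrm{conv}(\Phi(\XX))$. Because $\XX$ is compact and $\Phi$ continuous, $\Phi(\XX)$ is compact, and hence in finite dimensions $\mathrm{conv}(\Phi(\XX))$ is also compact. If $v\notin\mathrm{conv}(\Phi(\XX))$, the Hahn--Banach separation theorem would provide a linear functional $L$ on $\RR^N$ with $L(v)>\sup_{x\in\XX}L(\Phi(x))$; but since $\mu^*$ is a probability measure,
\be
L(v)=\int_\XX L(\Phi(x))\,d\mu^*(x)\le \sup_{x\in\XX}L(\Phi(x)),
\ee
a contradiction. A more hands-on alternative is to partition $\XX$ into finitely many Borel pieces $E_i$ on which each $\phi_j$ oscillates by less than $\e$ (possible by uniform continuity on the compactum $\XX$), choose $y_i\in E_i$, and observe that $\sum_i \mu^*(E_i)\Phi(y_i)$ is a convex combination of points of $\Phi(\XX)$ approximating $v$ to within $\e\sqrt{N}$ in Euclidean norm, so $v$ lies in the closure of $\mathrm{conv}(\Phi(\XX))$, which coincides with $\mathrm{conv}(\Phi(\XX))$ by compactness.

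The second step is to invoke the refined form of Carath\'eodory's theorem: any point in the convex hull of a set contained in an affine subspace of dimension $d'$ is a convex combination of at most $d'+1$ points of that set. Since $\Phi(\XX)\subset H$ and $\dim H=N-1$, this gives a representation $v=\sum_{k=1}^{N}w_k\Phi(x_k)$ with $x_k\in\XX$, $w_k\ge 0$, and $\sum_k w_k=1$; reading off the $j$-th coordinate yields exactly \eqref{tchakaloffquad}. If strictly fewer than $N$ points actually appear in the Carath\'eodory representation, the list is padded by arbitrary extra nodes assigned weight zero, which does not affect any of the identities.

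The main (mild) obstacle is the first step, namely verifying that $v$ lies in $\mathrm{conv}(\Phi(\XX))$; once that is done, Carath\'eodory's theorem closes the argument almost mechanically. The reduction from the generic $N+1$ points produced by the plain version of Carath\'eodory to the stated bound of $N$ points is precisely what the normalization $\phi_0\equiv 1$ buys, since it confines the relevant compact set to the affine hyperplane $H$ of dimension $N-1$.
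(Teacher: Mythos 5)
Your proposal is correct. Note, however, that the paper does not actually prove Theorem~\ref{theo:tchakalofftheo}: it is cited verbatim from Rivlin's book (\cite[Exercise~2.5.8, p.~100]{rivlin1974chebyshev}) as a known result, so there is no in-paper argument to compare against. Your argument is the standard one: set $\Phi(x)=(\phi_0(x),\ldots,\phi_{N-1}(x))$, show $v=\int_\XX\Phi\,d\mu^*$ lies in $\mathrm{conv}(\Phi(\XX))$ via a separation argument (using that $\Phi(\XX)$ is compact, hence $\mathrm{conv}(\Phi(\XX))$ is compact and closed in $\RR^N$), and then apply the affine refinement of Carath\'eodory's theorem, using $\phi_0\equiv1$ to confine $\Phi(\XX)$ to a hyperplane of affine dimension $N-1$ and thereby reduce the count from $N+1$ to $N$. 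All the steps are sound. One small caution in the ``hands-on'' alternative: $\XX$ is only assumed to be a compact topological space, not a metric space, so the appeal to uniform continuity is not literally available; the same conclusion can be reached by covering $\XX$ with open sets on which each $\phi_j$ oscillates by less than $\e$, extracting a finite subcover, and disjointifying it into Borel pieces. Since your primary route via the separation theorem requires no metric, this is a cosmetic point rather than a gap.
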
 
An optimization procedure to compute the nodes and weights in this theorem is suggested in \cite{sloanfest}.
\qed}
\end{rem}

\subsection{Kernels and operators}\label{bhag:kernels}
Let $r\ge 0$. In Section~\ref{bhag:mainresults}, $r$ will be a parameter that defines the smoothness of the target function.
 In this paper, we will use heavily the following kernels, motivated by \cite{frankgreek, butzer, mhaskar2016localized} :
\be\label{eq:modifiedjacksonkern}
\widetilde{\Phi}_{d;n,r}(x)= K_{d;(d+2)n}(x)\frac{p_{dn}^{(d/2+r,d/2-2)}(x)}{p_{dn}^{(d/2+r,d/2-2)}(1)},
\ee
and
\be\label{eq:jacksonkerndef}
\Phi_{d;n,r}(x)=\widetilde{\Phi}_{d;n,r}(x)\left(\frac{1+x}{2}\right)^{n}= K_{d;(d+2)n}(x)\frac{p_{dn}^{(d/2+r,d/2-2)}(x)}{p_{dn}^{(d/2+r,d/2-2)}(1)}\left(\frac{1+x}{2}\right)^n.
\ee
Corresponding to the two kernels, we define two operators as follows.
If $\nu$ is a measure on $\SS^d$ having bounded total variation and $f$ is integrable with respect to $\nu$, we define
\be\label{eq:modifiedoperatordef}
\widetilde{\sigma}_{d;n,r}(\nu, f)(\x)=\int_{\SS^d}f(\y)\widetilde{\Phi}_{d;n,r}(\x\cdot\y)d\nu(\y), \qquad n >0, \ \x\in\SS^d,
\ee
and
\be\label{eq:sphoperatordef}
\sigma_{d;n,r}(\nu, f)(\x)=\int_{\SS^d}f(\y)\Phi_{d;n,r}(\x\cdot\y)d\nu(\y), \qquad n >0, \ \x\in\SS^d.
\ee

The operators defined above provide good approximation in the sense which we now describe.\\

\noindent\textbf{Constant convention}\\

\noindent\textit{In the sequel, the notation $A\ls B$ will denote $A\le cB$ for a positive constant $c$ that may depend upon fixed parameters under discussion, such as the smoothness parameter $r$ to be introduced in Section~\ref{bhag:mainresults}, but independent of $d$, $n$, $f$, or the points on the sphere. The notation $A\gs B$ will mean $B\ls A$, and $A\sim B$ will mean $A\ls B\ls A$. The notation $A=B+\O(C)$ will mean $|A-B|\ls C$. \qed}\\[1ex]

For $f\in C(\SS^d)$, we denote
\be\label{eq:degapproxdef}
E_{d;n}(f)=\min_{P\in\Pi_n^d}\|f-P\|_\infty.
\ee
\begin{theorem}\label{theo:goodsphapprox}
Let $d\ge 3$, $r\ge 0$, $n\ge 2(d+1)$,  and $f\in C(\SS^d)$.\\[1ex]
{\rm (a)} If $n, r>0$,  $P\in \Pi_n$, and $\nu$ is a quadrature measure of order $2(d+2)n$, then $\sigma_{d;n,r}(\nu,P)=P$, $\widetilde{\sigma}_{d;n,r}(P)=P$.
{\rm (b)} If $\nu\in\mathsf{MZQ}(d;2(d+2)n)$, then
\be\label{eq:degapproxstrong}
E_{d;2(d+2)n}(f)\le \|f-\widetilde{\sigma}_{d;n,r}(\nu,f)\|_\infty \ls d^{1/6}\tn\nu\tn_{d;2(d+2)n}E_{d;n}(f),
\ee 
and
\be\label{eq:degapproxstrongbis}
E_{d;2(d+2)n}(f)\le \|f-\sigma_{d;n,r}(\nu,f)\|_\infty \ls d^{1/6}\tn\nu\tn_{d;2(d+2)n}E_{d;n}(f).
\ee 
\end{theorem}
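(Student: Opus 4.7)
The plan is to separate the argument into the reproduction identity (a) and an error decomposition for (b) that isolates a single operator-norm estimate as the technical core.

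For (a), I would write $\widetilde{\Phi}_{d;n,r}(x)=K_{d;(d+2)n}(x)\,Q(x)$ with $Q(x)=p_{dn}^{(d/2+r,d/2-2)}(x)/p_{dn}^{(d/2+r,d/2-2)}(1)$, a polynomial of degree $dn$ with $Q(1)=1$. For $P\in\Pi_n^d$, the function $\y\mapsto P(\y)Q(\x\cdot\y)$ is a polynomial on $\SS^d$ of degree strictly less than $n+dn\le(d+2)n$, so the reproducing identity \eqref{eq:sphreprod} for $K_{d;(d+2)n}$ gives
\[
\int_{\SS^d} P(\y)\,Q(\x\cdot\y)\,K_{d;(d+2)n}(\x\cdot\y)\,d\mu_d^*(\y)=P(\x)\,Q(1)=P(\x).
\]
Since $\y\mapsto P(\y)\widetilde{\Phi}_{d;n,r}(\x\cdot\y)$ is a polynomial of degree $<n+(2d+2)n-1<2(d+2)n$, the quadrature property of $\nu$ of order $2(d+2)n$ replaces $\mu_d^*$ by $\nu$, yielding $\widetilde{\sigma}_{d;n,r}(\nu,P)=P$. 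Inserting the additional factor $R(x)=((1+x)/2)^n$ (with $R(1)=1$ and $\deg R=n$, keeping the degree of the integrand below $2(d+2)n$) proves $\sigma_{d;n,r}(\nu,P)=P$ by the same argument.

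For (b), pick $P^*\in\Pi_n^d$ with $\|f-P^*\|_\infty=E_{d;n}(f)$. Linearity together with (a) gives $f-\widetilde{\sigma}_{d;n,r}(\nu,f)=(f-P^*)-\widetilde{\sigma}_{d;n,r}(\nu,f-P^*)$, so
\[
\|f-\widetilde{\sigma}_{d;n,r}(\nu,f)\|_\infty\le (1+M)\,E_{d;n}(f),\qquad M:=\sup_{\x\in\SS^d}\int_{\SS^d}|\widetilde{\Phi}_{d;n,r}(\x\cdot\y)|\,d|\nu|(\y).
\]
Applying Cauchy--Schwarz to the factorization $\widetilde{\Phi}_{d;n,r}=K_{d;(d+2)n}\cdot Q$, and noting that $\y\mapsto K_{d;(d+2)n}(\x\cdot\y)$ and $\y\mapsto Q(\x\cdot\y)$ are polynomials of degrees $(d+2)n-1$ and $dn$ respectively (both lying in $\Pi_{(d+2)n}^d$), the Marcinkiewicz--Zygmund inequality for $\nu\in\mathsf{MZ}(d;2(d+2)n)$ dominates each $d|\nu|$-integral by $\tn\nu\tn_{d;2(d+2)n}$ times the corresponding $L^2(\mu_d^*)$ integral. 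The first of these equals $K_{d;(d+2)n}(1)=\mathsf{dim}(\Pi_{(d+2)n}^d)$ by \eqref{eq:sphreprodl2}, and by \eqref{eq:zonalintegral} the second reduces to $(\omega_{d-1}/\omega_d)\int_{-1}^1 Q(t)^2(1-t^2)^{d/2-1}\,dt$, both independent of $\x$ by rotation invariance.

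The principal technical obstacle is therefore the dimension-tracking Jacobi-polynomial estimate
\[
\mathsf{dim}(\Pi_{(d+2)n}^d)\cdot\frac{\omega_{d-1}}{\omega_d}\int_{-1}^1\frac{p_{dn}^{(d/2+r,d/2-2)}(t)^2}{p_{dn}^{(d/2+r,d/2-2)}(1)^2}(1-t^2)^{d/2-1}\,dt\;\ls\;d^{1/3},
\]
whose square root produces the claimed $d^{1/6}$. I would attack this by using \eqref{eq:pkat1} and \eqref{eq:jacobikernat1} to write every $\Gamma$-factor explicitly, Stirling's formula to extract the $d$-dependence, and a substitution localizing near $t=1$ to compare the sphere weight $(1-t^2)^{d/2-1}$ with the Jacobi orthogonality weight $(1-t)^{d/2+r}(1+t)^{d/2-2}$; the parameters $(d/2+r,d/2-2)$ and degree $dn$ appearing in \eqref{eq:modifiedjacksonkern} are engineered precisely so that the huge $\Gamma$-factors in $\mathsf{dim}(\Pi_{(d+2)n}^d)$ cancel against the smallness of the weighted $L^2$ norm of $Q$ up to the sub-polynomial factor $d^{1/3}$, with the exponent $1/3$ arising from the trade-off between the Jacobi-polynomial asymptotics and the effective width $\sim(dn)^{-1}$ of the concentration region. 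For $\sigma_{d;n,r}$ the extra factor $((1+\x\cdot\y)/2)^n\in[0,1]$ gives $|\Phi_{d;n,r}|\le|\widetilde{\Phi}_{d;n,r}|$ pointwise, so the same operator-norm bound transfers. Finally, the left inequalities $E_{d;2(d+2)n}(f)\le\|f-\widetilde{\sigma}_{d;n,r}(\nu,f)\|_\infty$ and its $\sigma$-analog are immediate, since each operator's output, viewed as a function of $\x$, is a polynomial in $\Pi_{2(d+2)n}^d$.
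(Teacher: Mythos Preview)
Your overall architecture matches the paper's exactly: part (a) via the reproducing property of $K_{d;(d+2)n}$ combined with the quadrature condition, and part (b) via the standard Lebesgue-constant argument, Cauchy--Schwarz on the factorization $\widetilde\Phi_{d;n,r}=K_{d;(d+2)n}\cdot Q$, the MZ inequality to pass from $|\nu|$ to $\mu_d^*$, and then the reduction to the weighted Jacobi $L^2$ estimate you display. This is precisely how the paper proceeds (the Lebesgue bound is isolated as Theorem~\ref{theo:fundaest}, and the Jacobi integral is Theorem~\ref{theo:jacobisq}(c) with $s=0$).

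The gap is in your sketch of the Jacobi estimate itself. Your plan---Stirling on the $\Gamma$-factors plus a localization near $t=1$---will not by itself produce the exponent $1/6$; the normalization $p_{dn}^{(d/2+r,d/2-2)}(1)$ and $\mathsf{dim}(\Pi_{(d+2)n}^d)$ are handled by Stirling, but the smallness of the $L^2$ integral away from $t=1$ requires a \emph{sharp pointwise} bound on $p_n^{(A,B)}(\cos\theta)$ with the right dependence on the Jacobi index $A$. The paper imports exactly this from Krasikov's inequality (Proposition~\ref{prop:jacobibds}(b)),
\[
|p_n^{(A,B)}(\cos\theta)|\le 2A^{1/6}(1-\cos\theta)^{-A/2-1/4}(1+\cos\theta)^{-B/2-1/4},
\]
and the $A^{1/6}\sim d^{1/6}$ there is the sole source of the $d^{1/6}$ in the theorem. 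The proof of Theorem~\ref{theo:jacobisq}(a) then splits the integral at an optimized $\delta$, uses $|p_n|\le p_n(1)$ on $[0,2\delta]$ and Krasikov on $[2\delta,\pi]$, and balances. Your phrase ``effective width $\sim(dn)^{-1}$'' is the right intuition for $\delta$, but without Krasikov (or an equivalent uniform-in-$A$ bound) you cannot close the estimate with a sub-polynomial constant, let alone pin down $1/6$.
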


\bhag{Main results}\label{bhag:mainresults}
In this section, we describe our main results on the degree of approximation on $\SS^d$. 
We recall from Section~\ref{bhag:formulation} that each point on $\SS^d$ is potentially $\pi^*(\mathcal{I}_{d,K_\mathfrak{X}}(F))$ (cf. \eqref{eq:euclid_to_sphere}) for some $F\in K_\mathfrak{X}$.
So, for example, when we discuss points in a neighborhood of $\x=\pi^*(\mathcal{I}_{d,K_\mathfrak{X}}(F))\in\SS^d$, it is understood that the discussion refers to functions in a neighborhood of $F$.

Our first theorem deals with local approximation of smooth functions.
There are many definitions of smoothness of a function on the sphere (e.g., \cite{dai2013approximation}). Unlike the moduli of smoothness defined in the cited book, our definition (motivated by \cite[Chapter~VI, Section~2.3]{stein2016singular}) is coordinate free. We find it also more natural, and it leads to the right constants in our theorem below.

We recall that a function $f\in C(\SS^d)$ satisfies a H\"older condition of order $r\in (0,1]$ if 
$$
|f(\x)-f(\y)|\le c(f)|\x-\y|_{d+1}^r, \qquad \x,\y\in \SS^d.
$$
A local smoothness in this sense at $\x\in\SS^d$ would require the above estimate for all $\y$ in a neighborhood of $\x$. 
Fixing $\x$ one can think of $f(\x)\in\Pi_1^d$. 
Thus, one can say that $f$ is locally H\"older at $\x$ if
$$
\min_{P\in\Pi_1^d}\max_{\y\in \mathbb{B}(\x,\delta)}\frac{|f(\y)-P(\y)|}{|\x-\y|_{d+1}^r} <\infty.
$$
These considerations motivate the following Definition~\ref{def:smoothness}.

\begin{definition}\label{def:smoothness}
Let $f\in C(\SS^d)$, $r>0$ and $\x\in\SS^d$. The function $f$ is said to be $r$-smooth at $\x$ if there exists $\delta=\delta(d;f,\x)>0$  such that
\be\label{eq:smoothnessdef}
\|f\|_{d;r,\x}:=\|f\|_\infty+\min_{P\in \Pi_r^d}\max_{\y\in \mathbb{B}(\x,\delta)}\frac{|f(\y)-P(\y)|}{|\x-\y|_{d+1}^r} <\infty.
\ee
The class of all $f\in C(\SS^d)$ for which $\|f\|_{d;r,\x}<\infty$ will be denoted by $W_{d;r,\x}$.
The class $W_{d;r}$ will denote the set of all $f\in C(\SS^d)$ for which 
\be\label{eq:globalsmoothessdef}
\|f\|_{d;r}=\sup_{\x\in \SS^d}\|f\|_{d;r,\x} <\infty.
\ee
We note that for $f\in W_{d;r}$, we may choose $\delta(d;f)$ in \eqref{eq:smoothnessdef} to be independent of $\x$.
\end{definition}

\begin{rem}\label{rem:smoothexam}
{\rm
If $r$ is an integer and $f$ is $r$-times differentiable in a neighborhood of $\x$, then the function $\z\mapsto f(\z/|\z|_{d+1})$, $\z\in\RR^{d+1}$ is also $r$-times differentiable in a Euclidean neighborhood of $\x$. 
The restriction to $\SS^d$ of a Taylor polynomial of this function on the Euclidean neighborhood works as one of the polynomials in the definition \eqref{eq:smoothnessdef}.
\qed}
\end{rem}

\begin{rem}\label{rem:juergen}
{\rm
In the definition of local smoothness of $f$ at $\x$, it is tempting to let $\delta$ be independent of $\x$ by noting (cf. Lemma~\ref{lemma:sphwalsh}) that
$$
\|f\|_{d;r,\x}\le \|f\|_\infty +\min_{P\in\Pi_r}\max_{y\in\SS^d}\frac{|f(\y)-P(\y)|}{|\x-\y|_{d+1}^r}\ls \delta_\x^{-r} \|f\|_{d;r,\x}.
$$
In Theorem~\ref{theo:locsphapprox}, we wish to allow $n$ to be dependent on $\delta_\x$ (in  particular the smallest $n$ that satisfies all the conditions of that theorem).
Therefore, the factor $\delta_\x^{-r}$ may destroy the degree of approximation if we use in the definition the whole sphere rather than a ball of radius dependent on $\x$. 
Moreover, we feel that our definition underlines the role of local smoothness better than using the whole sphere in this definition.
\qed
}
\end{rem}

Our first theorem gives the local approximation properties of the operators for smooth functions.

\begin{theorem}\label{theo:locsphapprox}
Let $d\ge 3$,   $\x\in\SS^d$, $r=r(\x)>0$,and $f\in W_{d;r,\x}$. Let $nd\ge (d+r+1)^2$,  $\nu\in\mathsf{MZQ}(d;2(d+2)n)$.
If $n$ is large enough so that
 \be\label{eq:near_ball_def}
\delta_n=\sqrt{\frac{16r\log n}{n}}\le \delta(d;f,\x),
\ee
then
\be\label{eq:loc_deg_wholeint}
|f(\x)-\sigma_{d;n,r}(\nu,f)(\x)|\ls \frac{d^{1/6}}{\mathsf{dim}(\Pi_{2(2+d)n}^d)^{r/d}}\|f\|_{W_{d;r,\x}}\tn\nu\tn_{d;2(d+2)n}.
\ee
Moreover,
 \be\label{eq:loc_deg_approx}
 \left|f(\x)-\int_{\BB(\x,\delta_n)}\Phi_{d;n,r}(\x\cdot\y)f(\y)d\nu(\y)\right|\ls \frac{d^{1/6}}{\mathsf{dim}(\Pi_{2(d+2)n}^d)^{r/d}}\|f\|_{W_{d;r,\x}}\tn\nu\tn_{d;2(d+2)n}.
 \ee
 All the constants involved in $\ls$ depend only upon $r(\x)$ but are otherwise independent of $f$, $\x$, $n$, and $d$.
\end{theorem}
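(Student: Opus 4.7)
The plan is to combine three ingredients: the reproducing property of $\sigma_{d;n,r}$ on $\Pi_n^d$ given by Theorem~\ref{theo:goodsphapprox}(a), the local polynomial approximant furnished by $f\in W_{d;r,\x}$, and the concentration of the kernel $\Phi_{d;n,r}$ around the diagonal, which is forced by the factor $((1+x)/2)^n$. First, Definition~\ref{def:smoothness} supplies a polynomial $P_\x\in\Pi_r^d$ with $P_\x(\x)=f(\x)$ and $|f(\y)-P_\x(\y)|\le \|f\|_{d;r,\x}|\x-\y|_{d+1}^r$ for every $\y\in\BB(\x,\delta(d;f,\x))$. The hypothesis $nd\ge (d+r+1)^2$ together with $d\ge 3$ force $r<n$, so $P_\x\in\Pi_n^d$; since $\nu$ is in particular a quadrature measure of order $2(d+2)n$, Theorem~\ref{theo:goodsphapprox}(a) gives $\sigma_{d;n,r}(\nu,P_\x)(\x)=P_\x(\x)=f(\x)$. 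Subtracting, the error reduces to
\[
f(\x)-\sigma_{d;n,r}(\nu,f)(\x)=-\int_{\SS^d}\bigl(f(\y)-P_\x(\y)\bigr)\Phi_{d;n,r}(\x\cdot\y)\,d\nu(\y).
\]

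Next, I would split this integral over $\BB(\x,\delta_n)$ and its complement. On the complement, the identity $\x\cdot\y=1-|\x-\y|_{d+1}^2/2$ yields $((1+\x\cdot\y)/2)^n\le (1-\delta_n^2/4)^n\le e^{-n\delta_n^2/4}=n^{-4r}$ by the choice of $\delta_n$. Combined with polynomial-in-$n$-and-$d$ bounds on the remaining factors $K_{d;(d+2)n}$ and $p_{dn}^{(d/2+r,d/2-2)}/p_{dn}^{(d/2+r,d/2-2)}(1)$ (derived from \eqref{eq:jacobikern}--\eqref{eq:jacobikernat1}), and a crude bound $|f-P_\x|\le \|f\|_\infty+\|P_\x\|_\infty\ls \|f\|_{d;r,\x}$ obtained by controlling $P_\x$ via Markov--Bernstein on its behaviour inside $\BB(\x,\delta(d;f,\x))$, the contribution of the complement is bounded by $n^{-4r}$ times terms that are at most polynomial in $d$ and $n$, which is absorbed into the claimed bound since $n^{-4r}$ dominates the target $\mathsf{dim}(\Pi_{2(d+2)n}^d)^{-r/d}\sim n^{-r}$.

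For the near-field part, the local smoothness yields
\[
\Bigl|\int_{\BB(\x,\delta_n)}(f-P_\x)(\y)\Phi_{d;n,r}(\x\cdot\y)d\nu(\y)\Bigr|\le \|f\|_{d;r,\x}\int_{\BB(\x,\delta_n)}|\x-\y|_{d+1}^r\,|\Phi_{d;n,r}(\x\cdot\y)|\,d|\nu|(\y).
\]
The core task is then to show that the last integral is $\ls d^{1/6}\tn\nu\tn_{d;2(d+2)n}\,\mathsf{dim}(\Pi_{2(d+2)n}^d)^{-r/d}$. The factor $|\x-\y|_{d+1}^r$ provides the scale $\sim n^{-r}$ once $\Phi_{d;n,r}$ is recognized as a mollifier of width $\sim 1/n$; the $\mathsf{MZQ}(d;2(d+2)n)$ property of $\nu$ converts bounds against $\mu^*$ into bounds against $|\nu|$ via a Cauchy--Schwarz argument identical in spirit to the one used for Theorem~\ref{theo:goodsphapprox}(b), and it is precisely this step that contributes the $d^{1/6}$ constant.

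The main obstacle is the sharp $d^{1/6}$ constant in the weighted $L^1$-type estimate on $\Phi_{d;n,r}$ over the near ball: it rests on a careful bound on the Jacobi polynomial ratio $p_{dn}^{(d/2+r,d/2-2)}(x)/p_{dn}^{(d/2+r,d/2-2)}(1)$, exploiting the delicate parameters $(d/2+r,d/2-2)$ introduced in the definition \eqref{eq:jacksonkerndef}, and is of the same nature as the key lemma behind Theorem~\ref{theo:goodsphapprox}. The far-field estimate above has more than enough slack ($n^{-4r}$ versus $n^{-r}$) to absorb polynomial losses in both $n$ and $d$, which is why the sharpness issue concentrates on the near-field analysis. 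The second estimate \eqref{eq:loc_deg_approx} follows immediately, since its left-hand side differs from the quantity just bounded in \eqref{eq:loc_deg_wholeint} precisely by the complement integral, which has already been shown to be $o(n^{-r})$.
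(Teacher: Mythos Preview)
Your overall architecture is right and matches the paper: reproduce $f(\x)$ via $\sigma_{d;n,r}(\nu,P_\x)$, split at $\delta_n$, use the local smoothness on the near field, and invoke a weighted Lebesgue-constant estimate with weight $(1-\x\cdot\y)^{r/2}$ there. The paper does exactly this, appealing to Theorem~\ref{theo:fundaest} with $s=r$ for the near-field integral.

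The gap is in your far-field step. You write that after pulling out $((1+\x\cdot\y)/2)^n\le n^{-4r}$, ``polynomial-in-$n$-and-$d$ bounds on the remaining factors $K_{d;(d+2)n}$ and $p_{dn}^{(d/2+r,d/2-2)}/p_{dn}^{(d/2+r,d/2-2)}(1)$'' suffice because $n^{-4r}$ has slack over $n^{-r}$. But no such bounds of fixed polynomial degree in $n$ exist: pointwise, $|K_{d;(d+2)n}|\le K_{d;(d+2)n}(1)=\mathsf{dim}(\Pi_{(d+2)n}^d)\sim ((d+2)n)^d/\Gamma(d+1)$, and even the Krasikov bound \eqref{eq:jacobiptwisebd} combined with $1-\x\cdot\y\ge\delta_n^2/2$ leaves a factor $\sim n^{c\,d}$ with $c>0$. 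For $d>4r$ this swamps your $n^{-4r}$ cushion, so the far-field contribution is not controlled. A related minor slip: the claim $\|P_\x\|_\infty\ls\|f\|_{d;r,\x}$ via a Markov--Bernstein argument hides a factor $(4/\delta)^{2r}$ (this is a Chebyshev/Walsh extrapolation, Lemma~\ref{lemma:sphwalsh}, not Markov--Bernstein), and that factor is not bounded by $\|f\|_{d;r,\x}$.

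The fix is to use the \emph{same} Lebesgue-constant machinery on the far field that you already invoke on the near field. Writing $\Phi_{d;n,r}=\widetilde{\Phi}_{d;n,r}\cdot((1+\x\cdot\y)/2)^n$, on $\SS^d\setminus\BB(\x,\delta_n)$ one has $((1+\x\cdot\y)/2)^n\le e^{-n\delta_n^2/4}=n^{-4r}$, hence
\[
\int_{\SS^d\setminus\BB(\x,\delta_n)}|\Phi_{d;n,r}(\x\cdot\y)||f(\y)-P_\x(\y)|\,d|\nu|(\y)
\le n^{-4r}\,\|f-P_\x\|_\infty\int_{\SS^d}|\widetilde{\Phi}_{d;n,r}(\x\cdot\y)|\,d|\nu|(\y).
\]
Now Theorem~\ref{theo:fundaest} with $s=0$ gives $\int_{\SS^d}|\widetilde{\Phi}_{d;n,r}|\,d|\nu|\ls d^{1/6}\tn\nu\tn_{d;2(d+2)n}$, a bound \emph{independent of $n$}. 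Combined with $\|f-P_\x\|_\infty\ls(4/\delta_n)^{2r}\sim(n/\log n)^r$ from Lemma~\ref{lemma:sphwalsh}, the far-field is $\ls d^{1/6}\tn\nu\tn_{d;2(d+2)n}\,n^{-3r}(\log n)^{-r}$, comfortably inside the target. The second estimate \eqref{eq:loc_deg_approx} then follows exactly as you say.
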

\begin{rem}\label{rem:local_approx_version}
{\rm
The estimate \eqref{eq:loc_deg_approx} shows that for any $\x$, the approximation of $f(\x)$ is accomplished using only  values of $f$ in a small neighborhood of $\x$.
In the case when $f\in W_{d;r(\x),\x}$ for every $\x\in\SS^d$ for some $r(\x)>0$, then the quantity $\delta(d;f,\x)$ may be chosen independent of $\x$. The estimate \eqref{eq:loc_deg_wholeint} then shows that at each point $\x\in\SS^d$, the error $|f(\x)-\sigma_{d;n,r}(\nu,f)(\x)|$ adjusts automatically to the smoothness of $f$ at $\x$, even though the operator is defined in a global manner without requiring any smoothness on $f$ at all.
\qed}
\end{rem}

\begin{rem}\label{rem:r_vs_s}
{\rm
In the statement of Theorem~\ref{theo:locsphapprox}, the operator seems to depend upon the smoothness of $f$ near $\x$. 
It will be clear from the proof that one does not need to know this smoothness beforehand; any $r$ greater than the actual smoothness will work.
We chose to write the theorem in this way to avoid unnecessarily complicated notation that does not add much insight.
\qed}
\end{rem}

The following theorem is a global version of Theorem~\ref{theo:locsphapprox}.
\begin{theorem}\label{theo:globalsphapprox}
Let $d\ge 3$, $n\ge 2(d+1)$, and $f\in C(\SS^d)$.
If $r>0$, $f\in W_{d;r}$, and $n$ is large enough so that
$dn\ge (d+r+1)^2$ and\eqref{eq:near_ball_def} is satisfied with $\delta(d;f)$ in place of $\delta(d;f,\x)$,
then
\be\label{eq:sphsobolapprox}
E_{d;2(d+2)n}(f)\ls \frac{d^{1/6}}{\mathsf{dim}(\Pi_{2(d+2)n}^d)^{r/d}}\|f\|_{W_{d;r}}.
\ee
\end{theorem}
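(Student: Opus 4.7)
The plan is to derive Theorem~\ref{theo:globalsphapprox} as an essentially immediate uniform consequence of the pointwise local bound in Theorem~\ref{theo:locsphapprox}. The hypothesis $f\in W_{d;r}$ furnishes, via Definition~\ref{def:smoothness} and the remark immediately following \eqref{eq:globalsmoothessdef}, a single $\delta(d;f)>0$ that may serve as $\delta(d;f,\x)$ uniformly in $\x\in\SS^d$, together with the uniform bound $\|f\|_{d;r,\x}\le \|f\|_{d;r}$ for every $\x$.

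First I would select a concrete quadrature measure using Tchakaloff's theorem (Remark~\ref{rem:tchakaloff}): there is a positive quadrature measure $\nu$ of order $2(d+2)n$ supported on $\mathsf{dim}(\Pi_{2(d+2)n}^d)$ points of $\SS^d$. By Remark~\ref{rem:positivequadrature}, any positive quadrature measure of order $2(d+2)n$ lies automatically in $\mathsf{MZQ}(d;2(d+2)n)$ with $\tn\nu\tn_{d;2(d+2)n}=1$, so the quadrature-constant factor in the bound of Theorem~\ref{theo:locsphapprox} reduces to $1$ and plays no role in the subsequent estimates.

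Next I would apply Theorem~\ref{theo:locsphapprox} pointwise. The condition $nd\ge (d+r+1)^2$ is assumed outright, and the requirement $\delta_n\le \delta(d;f,\x)$ is satisfied with the single value $\delta(d;f)$ on the right simultaneously for every $\x\in\SS^d$ by hypothesis. Consequently \eqref{eq:loc_deg_wholeint} yields
\begin{equation*}
|f(\x)-\sigma_{d;n,r}(\nu,f)(\x)| \;\ls\; \frac{d^{1/6}}{\mathsf{dim}(\Pi_{2(d+2)n}^d)^{r/d}}\|f\|_{W_{d;r}}, \qquad \x\in\SS^d,
\end{equation*}
with implied constants independent of $\x$. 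Since $\sigma_{d;n,r}(\nu,f)\in \Pi_{2(d+2)n}^d$---this is the very containment exploited in Theorem~\ref{theo:goodsphapprox} to use $\|f-\sigma_{d;n,r}(\nu,f)\|_\infty$ as an upper bound for $E_{d;2(d+2)n}(f)$---taking the supremum over $\x\in\SS^d$ and invoking the defining minimality \eqref{eq:degapproxdef} produce the required estimate \eqref{eq:sphsobolapprox}.

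I expect the only substantive point needing care to be the uniformity of the neighborhood, i.e. verifying that a single global $\delta(d;f)$ in the hypothesis \eqref{eq:near_ball_def} truly activates the local hypothesis of Theorem~\ref{theo:locsphapprox} at every $\x$ at once. This is precisely what the concluding note of Definition~\ref{def:smoothness} guarantees, so no new analytic input is required beyond the uniform invocation of the local theorem already established; the proof is a direct corollary, with the $d^{1/6}$ dependence and the $\mathsf{dim}(\Pi_{2(d+2)n}^d)^{-r/d}$ decay inherited verbatim from Theorem~\ref{theo:locsphapprox}.
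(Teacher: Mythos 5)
Your proof is correct and takes essentially the same route as the paper: the paper's own proof is a one-line reduction to Theorem~\ref{theo:locsphapprox}, applied uniformly over $\x\in\SS^d$ thanks to the uniform choice of $\delta(d;f)$, with the passage to $E_{d;2(d+2)n}(f)$ and the choice of $\nu$ left implicit. You spell out those implicit steps (Tchakaloff plus Remark~\ref{rem:positivequadrature} to obtain $\nu\in\mathsf{MZQ}(d;2(d+2)n)$ with constant $1$, and the containment $\sigma_{d;n,r}(\nu,f)\in\Pi_{2(d+2)n}^d$), which is a harmless elaboration rather than a different argument; note that taking $\nu=\mu_d^*$ would serve equally well.
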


\begin{rem}\label{rem:global_optimality}
{\rm
Theorem~\ref{theo:tchakalofftheo} shows the existence of a positive measure $\nu\in \mathsf{MZQ}(d;2(d+2)n)$ supported on exactly $\mathsf{dim}(\Pi_{2(d+2)n}^d)$ points. 
Theorems~\ref{theo:goodsphapprox} and \ref{theo:globalsphapprox} show that the operator $\widetilde{\sigma}_{d;n,r}$ (and also $\sigma_{d;n,r}$) provide optimal approximation in the sense of nonlinear widths based on the minimal number of samples of the target function.
\qed}
\end{rem}

Next, we discuss approximation of analytic functions. 
The following definition is motivated by a theorem of Siciak \cite{siciak1962some} regarding approximation of functions on a unit cube which are analytic in a so-called polyellipse.

\begin{definition}\label{def:analytic}
Let $f\in C(\SS^d)$, $\x\in\SS^d$, $\rho >0$. The function $f$ is said to be $\rho$-analytic at $x$ if there is exists $\delta=\delta(d;f,\x)>0$ such that 
\be\label{eq:loc_anal_norm}
\|f\|_{A_{d;\rho,\x}}=\|f\|_\infty +\sup_{n\ge 0}\left\{\exp\left(\rho n\right)\min_{P\in\Pi_n^d}\|f-P\|_{\infty,\BB(\x,\delta)}\right\}<\infty.
\ee
The class of all $f$ for which $\|f\|_{A_{d;\rho,\x}}<\infty$ will be denoted by $A_{d;\rho,\x}$. The class $A_{d;\rho}$ will denote the set of all $f\in C(\SS^d)$ for which
\be\label{eq:global_anal_norm}
\|f\|_{A_{d;\rho}}=\|f\|_\infty +\sup_{n>0}\left\{\exp\left(\rho n\right)E_{d;n}(f)\right\}<\infty.
\ee
We note that if $f\in A_{d;\rho}$ then we may choose $\delta(d;f,\x)$ in \eqref{eq:loc_anal_norm} to be independent of $\x$.
\end{definition}

The following theorem describes the analogue of Theorem~\ref{theo:locsphapprox} for locally analytic functions.
The global version is immediate from Theorem~\ref{theo:goodsphapprox} and the definitions.

\begin{theorem}\label{theo:analapprox}
Let $d\ge 3$, $r\ge 0$,  $nd\ge (d+r+1)^2$, $\nu\in\mathsf{MZQ}(d;2(d+2)n)$.
 If $\x\in\SS^d$, $f\in A_{d;\rho,\x}$, and $\delta=\delta(d;f,\x)$ be as in Definition~\ref{def:analytic}. 
Then
\be\label{eq:loc_anal_approx}
|f(\x)-\sigma_{d;n,r}(\nu,f)(\x)|\ls d^{1/6}\exp\left(-\min(\rho, \delta^2/4-2\log(4/\delta))n\right)\|f\|_{A_{d;\rho,\x}}\tn\nu\tn_{2(d+2)n},
\ee
and
\be\label{eq:loc_anal_approx_bis}
\left|f(\x)-\int_{\BB(\x,\delta(d;f,\x))}\Phi_{d;n,r}(\x\cdot\y)f(\y)d\nu_n(\y)\right| \ls d^{1/6}\exp\left(-\min(\rho,  \delta^2/4-2\log(4/\delta))n\right)\|f\|_{A_{d;\rho,\x}}\tn\nu\tn_{2(d+2)n}.
\ee
\end{theorem}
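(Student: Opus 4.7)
The plan is to run the same near/far split used for the Sobolev-type Theorem~\ref{theo:locsphapprox}, but with the local polynomial approximation furnished by the definition of $A_{d;\rho,\x}$ rather than by a Taylor-type polynomial. Fix $n$ as in the statement and, by Definition~\ref{def:analytic}, choose $P=P_n\in\Pi_n^d$ with
\be\label{eq:anal_plan_approx}
\|f-P\|_{\infty,\BB(\x,\delta)}\le \exp(-\rho n)\|f\|_{A_{d;\rho,\x}}.
\ee
Since $\nu\in\mathsf{MZQ}(d;2(d+2)n)$ and $P\in\Pi_n^d$, Theorem~\ref{theo:goodsphapprox}(a) gives $\sigma_{d;n,r}(\nu,P)=P$. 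Writing
$$
f(\x)-\sigma_{d;n,r}(\nu,f)(\x)=\bigl(f(\x)-P(\x)\bigr)-\sigma_{d;n,r}(\nu,f-P)(\x),
$$
the first summand is controlled directly by \eqref{eq:anal_plan_approx} since $\x\in\BB(\x,\delta)$.

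\noindent The second summand I would split as $I_{\rm near}+I_{\rm far}$, with $I_{\rm near}$ integrating over $\BB(\x,\delta)$ and $I_{\rm far}$ over its complement. On $\BB(\x,\delta)$, $|f-P|$ is already $O(\exp(-\rho n)\|f\|_{A_{d;\rho,\x}})$, so $I_{\rm near}$ is bounded by that quantity times $\int|\Phi_{d;n,r}(\x\cdot\y)|\,d|\nu|(\y)$. The latter integral is exactly the quantity that drives the $d^{1/6}$ constant in Theorem~\ref{theo:goodsphapprox}; reusing that estimate yields
$$
|I_{\rm near}|\ls d^{1/6}\exp(-\rho n)\|f\|_{A_{d;\rho,\x}}\tn\nu\tn_{d;2(d+2)n}.
$$

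\noindent The heart of the argument is $I_{\rm far}$, whose bound must generate the $\delta^2/4-2\log(4/\delta)$ decay. Here I would exploit the two structural features of $\Phi_{d;n,r}$ in \eqref{eq:jacksonkerndef}: the uniform Szeg\H o-type bound $|p^{(d/2+r,d/2-2)}_{dn}(x)|\le p^{(d/2+r,d/2-2)}_{dn}(1)$ (valid since $d\ge 3$ and $d/2+r\ge d/2-2\ge -1/2$) together with $|K_{d;(d+2)n}(x)|\le K_{d;(d+2)n}(1)=\mathsf{dim}(\Pi_{(d+2)n}^d)$ give $|\Phi_{d;n,r}(\x\cdot\y)|\le \mathsf{dim}(\Pi_{(d+2)n}^d)\,((1+\x\cdot\y)/2)^n$, and on $\SS^d\setminus\BB(\x,\delta)$ the last factor is at most $(1-\delta^2/4)^n\le \exp(-n\delta^2/4)$. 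To control $f-P$ outside $\BB(\x,\delta)$, I would use the triangle inequality together with a Bernstein--Walsh-type extension inequality for spherical polynomials of the form
$$
\|P\|_{\infty,\SS^d}\ls (4/\delta)^{2\deg P}\|P\|_{\infty,\BB(\x,\delta)},
$$
combined with $\|P\|_{\infty,\BB(\x,\delta)}\le 2\|f\|_{A_{d;\rho,\x}}$ (from \eqref{eq:anal_plan_approx}); applying this to $P\in\Pi_n^d$ and using $|\nu|(\SS^d)\le \tn\nu\tn_{d;2(d+2)n}$ (take $P\equiv 1$ in \eqref{eq:mzineq}) gives
$$
|I_{\rm far}|\ls \mathsf{dim}(\Pi_{(d+2)n}^d)(4/\delta)^{2n}\exp(-n\delta^2/4)\,\|f\|_{A_{d;\rho,\x}}\tn\nu\tn_{d;2(d+2)n},
$$
and the polynomial prefactor in $n$ (and the $d^{1/6}$-vs-$\mathsf{dim}$ comparison) is absorbed into the exponential for $n$ large, leaving precisely the decay rate $\exp(-n(\delta^2/4-2\log(4/\delta)))$. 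Combining the three contributions and taking the worse exponent yields \eqref{eq:loc_anal_approx}; since the tail piece $I_{\rm far}$ has already been shown to be within the claimed bound, discarding it from the reconstruction formula proves \eqref{eq:loc_anal_approx_bis}.

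\noindent\emph{Main obstacle.} The delicate step is the Bernstein--Walsh-type extension inequality with the explicit constant $(4/\delta)^{2n}$ and with no dependence on $d$ beyond what is already absorbed in the $d^{1/6}$ factor. Establishing this on $\SS^d$ seems to require reducing to the one-variable problem along the arc joining $\x$ to the farthest point of $\SS^d$ and invoking a Chebyshev/Videnskii estimate on a symmetric interval of half-length $\sim\delta$; tracking that this gives an exponent $2n$ (rather than $n$) and a base exactly $4/\delta$ is the source of the constant $2\log(4/\delta)$ and must be done carefully. Everything else is routine bookkeeping built on Theorem~\ref{theo:goodsphapprox} and Definition~\ref{def:analytic}.
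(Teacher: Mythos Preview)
Your overall architecture---choose $P\in\Pi_n^d$ from the definition of $A_{d;\rho,\x}$, use $\sigma_{d;n,r}(\nu,P)=P$, and split the integral into near and far pieces---matches the paper's proof exactly, and the near piece is handled correctly via the Lebesgue constant bound. The Bernstein--Walsh step you flag as the ``main obstacle'' is in fact already available in the paper as Lemma~\ref{lemma:sphwalsh}: one restricts $P$ to a geodesic through $\x$ (a trigonometric polynomial of degree $<n$) and invokes a one-variable Chebyshev-type estimate (Lemma~\ref{lemma:walshlemma}), yielding precisely $\|P\|_\infty\le(4/\delta)^{2n}\|P\|_{\infty,\BB(\x,\delta)}$ with no $d$-dependence. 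So your worry there is unfounded.

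The genuine gap is in your treatment of $I_{\rm far}$. Your crude pointwise bound $|\Phi_{d;n,r}(\x\cdot\y)|\le K_{d;(d+2)n}(1)\,((1+\x\cdot\y)/2)^n$ introduces a prefactor $\mathsf{dim}(\Pi_{(d+2)n}^d)\sim ((d+2)n)^d/\Gamma(d+1)$, and this \emph{cannot} be ``absorbed into the exponential for $n$ large'': the constant convention requires the implied constant to be independent of both $d$ and $n$, and $n^d\exp(-\epsilon n)$ is not uniformly bounded in $(d,n)$. The whole point of the paper's machinery is to keep the constant at $d^{1/6}$, and your bound on $I_{\rm far}$ destroys exactly that. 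The fix is to do what the paper does in \eqref{eq:pf4eqn1}: write $|\Phi_{d;n,r}(\x\cdot\y)|=|\widetilde{\Phi}_{d;n,r}(\x\cdot\y)|\,((1+\x\cdot\y)/2)^n$, pull out the uniform bound $((1+\x\cdot\y)/2)^n\le\exp(-n\delta^2/4)$ on the far region, and then invoke Theorem~\ref{theo:fundaest} with $s=0$ to get
\[
\int_{\SS^d}|\widetilde{\Phi}_{d;n,r}(\x\cdot\y)|\,d|\nu|(\y)\ \ls\ d^{1/6}\,\tn\nu\tn_{d;2(d+2)n}.
\]
This produces $|I_{\rm far}|\ls d^{1/6}(4/\delta)^{2n}\exp(-n\delta^2/4)\|f\|_{A_{d;\rho,\x}}\tn\nu\tn_{d;2(d+2)n}$ directly, with no spurious dimension factor.
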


In applications to uncertainty quantification, the samples of the input functions $F$ (which correspond to the points on $\SS^d$ at which the target function $f$ is sampled) are taken from a probability distribution.
Our next theorem describes the approximation properties in this set up.
We consider only global approximation of smooth functions.
A result for approximation of analytic functions can be proved similarly.
The local  approximation would involve two options.
One can assume random samples from a distribution supported on a neighborhood of the point at which local approximation is considered. 
This case is covered by taking the function $f_0$ in the following theorem to be supported on  this neighborhood.
The other option is to take the random sample on the entire sphere, but use only those which lie in a small neighborhood of the point in question.
This would involve the  use of multiplicative Chernoff bounds.
This line of work will take us too far away from the main theme of this paper.

\begin{theorem}\label{theo:probtheo}
Let $r>0$, $d\ge 3$, $\epsilon>0$.
Let $M\ge 2$, and $\C=\{\y_1,\cdots,\y_M\}\subset \SS^d$ be random samples drawn from a probability  distribution $d\tau=f_0d\mu_d^*$ for some $f_0\in C(\SS^d)$. Let $f\in C(\SS^d)$, $r>0$, $f_0f\in W_{d;r}$, $n$ be large enough to satisfy the conditions of Theorem~\ref{theo:globalsphapprox}(b). 
We define
\be\label{eq:probestimator}
\widehat{\sigma}_{d;n,r}(\C,f)(x)=\frac{1}{M}\sum_{j=1}^M f(\y_j)\Phi_{d;n,r}(\x\cdot\y_j), \qquad \x\in\SS^d.
\ee
With $D_n=\mathsf{dim}(\Pi_{2(d+2)n}^d)=K_{d;2(d+2)n}(1)$, if
\be\label{eq:Meqn}
M\gs 2^{-d} d^{-1/3}\frac{\|f\|_\infty}{\|f_0f\|_\infty}D_n^{(2r+d)/d}\left(\log D_n+(d+1)\log d +d\log (16e/\pi)+\log(1/\epsilon)\right),
\ee
with an appropriate constant involved in the inequality,
then with $\tau$-probability $>1-\epsilon$,
\be\label{eq:probest}
\|f_0f-\widehat{\sigma}_{d;n,r}(\C,f)\|_\infty \ls \frac{d^{1/6}}{D_n^{r/d}}\|f_0f\|_{W_{d;r}}
\ee
Equivalently, with 
$$
B=(2r+d)\left\{\log d+\log(32e^2/\pi)-\frac{1}{2d}\log (d+1)\right\}, \qquad \tilde{M}=\frac{(2r+d)2^d}{d^{2/3}}M,
$$
and
\be\label{eq:ncond}
n\sim \left\{\frac{\tilde{M}}{\log \tilde{M}+B}\right\}^{1/(2r+d)}\sim \left\{\frac{M}{\log M +(2r+d)\log d}\right\}^{1/(2r+d)}
\ee
we have
\be\label{eq:probestbis}
\|f_0f-\widehat{\sigma}_{d;n,r}(\C,f)\|_\infty \ls d^{1/6}
\left\{\frac{M}{\log M+(2r+d)\log d}\right\}^{-r/(2r+d)}.
\ee
\end{theorem}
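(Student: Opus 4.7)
The plan is a bias-variance decomposition combined with a sup-norm Marcinkiewicz--Zygmund discretization of $\SS^d$. Since the samples $\y_j$ are drawn i.i.d.\ from $d\tau = f_0\, d\mu_d^*$, the expectation satisfies $\mathbb{E}\widehat{\sigma}_{d;n,r}(\C,f)(\x) = \int_{\SS^d} f(\y)\Phi_{d;n,r}(\x\cdot\y) f_0(\y)\, d\mu_d^*(\y) = \sigma_{d;n,r}(\mu_d^*, f_0 f)(\x)$. Because the rotation-invariant measure $\mu_d^*$ is trivially in $\mathsf{MZQ}(d;N)$ for every $N$ with $\tn \mu_d^*\tn_{d;N}=1$, Theorem~\ref{theo:globalsphapprox} applied to $f_0 f\in W_{d;r}$ bounds the bias $\|f_0 f - \sigma_{d;n,r}(\mu_d^*, f_0 f)\|_\infty$ by $c\, d^{1/6}\, D_n^{-r/d}\|f_0 f\|_{W_{d;r}}$. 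This matches the right-hand side of \eqref{eq:probest} and reduces the problem to showing that the stochastic deviation $\|\widehat{\sigma}_{d;n,r}(\C,f) - \sigma_{d;n,r}(\mu_d^*, f_0 f)\|_\infty$ is of the same order with $\tau$-probability greater than $1-\epsilon$.

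For the stochastic deviation I would fix $\x\in\SS^d$ and apply Bernstein's inequality to the centred i.i.d.\ sample $X_j(\x) = f(\y_j)\Phi_{d;n,r}(\x\cdot\y_j)$, using the pointwise bound $|X_j(\x)|\le \|f\|_\infty \|\Phi_{d;n,r}\|_\infty$ and the second-moment bound $\mathbb{E}[X_j(\x)^2] \le \|f\|_\infty \|f_0 f\|_\infty \int_{\SS^d}\Phi_{d;n,r}^2\, d\mu_d^*$. From the explicit form \eqref{eq:jacksonkerndef}, together with \eqref{eq:jacobikernsq} and the bound $((1+x)/2)^n\le 1$ on $[-1,1]$, both $\|\Phi_{d;n,r}\|_\infty$ and $\int_{\SS^d}\Phi_{d;n,r}^2\,d\mu_d^*$ are $\ls K_{d;(d+2)n}(1)\sim 2^{-d}D_n$, where the $2^{-d}$ gain is the dimension ratio $\mathsf{dim}\,\Pi_{(d+2)n}^d/\mathsf{dim}\,\Pi_{2(d+2)n}^d$. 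In the variance-dominated regime Bernstein thus yields, at a single $\x$,
\begin{equation*}
\Pr\bigl[|\widehat{\sigma}_{d;n,r}(\C,f)(\x) - \sigma_{d;n,r}(\mu_d^*, f_0 f)(\x)| > t\bigr] \le 2\exp\!\left(-\frac{M t^2}{c\, 2^{-d}\|f\|_\infty\|f_0 f\|_\infty D_n}\right).
\end{equation*}
To promote this to a uniform bound I would exploit that both functions above lie in $\Pi_{2(d+2)n}^d$ and select a finite test set $\C^*\subset\SS^d$ satisfying $\|P\|_{L^\infty(\SS^d)}\le c_0\|P\|_{L^\infty(\C^*)}$ for all $P\in\Pi_{2(d+2)n}^d$. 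Taking $\C^*$ to be a maximal $\delta$-separated subset of $\SS^d$ with $\delta$ a small multiple of $1/n$ gives the required sup-norm Marcinkiewicz--Zygmund inequality, and a volume estimate using $\omega_d\sim (2\pi e/d)^{d/2}$ bounds the cardinality by $\log|\C^*|\ls \log D_n + (d+1)\log d + d\log(16e/\pi)$. A union bound over $\C^*$ multiplies the single-point Bernstein probability by $|\C^*|$, and forcing the result below $\epsilon$ at $t = c_1\, d^{1/6}\, D_n^{-r/d}\|f_0 f\|_{W_{d;r}}$ gives exactly the hypothesis \eqref{eq:Meqn}. The triangle inequality then closes \eqref{eq:probest}; the equivalent form \eqref{eq:probestbis} follows by inverting the implicit relation $M\sim n^{2r+d}(\log n+\log d)$ encoded in \eqref{eq:Meqn} and substituting the resulting $n$ back into the right-hand side of \eqref{eq:probest}.

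The principal obstacle is the uniform-bound step: a naive Lipschitz covering of $\SS^d$ using the Markov--Bernstein bound $\|\nabla P\|_\infty\le 2(d+2)n\|P\|_\infty$ would require a $\delta$-net at scale $\delta\sim 1/(dn\, K_{d;(d+2)n}(1))$, producing a logarithmic factor of order $d^2\log n$ that would destroy the advertised $(d+1)\log d$ term in \eqref{eq:Meqn}. Obtaining the sharper sup-norm Marcinkiewicz--Zygmund discretization with $\log|\C^*|=O(\log D_n + d\log d)$ is therefore essential, and keeping its dimensional constants honest is the delicate technical point. Alongside it, the variance estimate must exploit the full localization provided by the Jackson factor $((1+x)/2)^n$ in \eqref{eq:jacksonkerndef}, so that only $K_{d;(d+2)n}(1)\sim 2^{-d}D_n$---and not $D_n$ itself---enters the Bernstein denominator; this is what ultimately produces the $2^{-d}$ prefactor visible in \eqref{eq:Meqn}.
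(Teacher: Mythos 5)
Your proposal is correct and follows essentially the same route as the paper's own proof: a bias–variance split, pointwise Bernstein concentration for $Z_j = f(\y_j)\Phi_{d;n,r}(\x\cdot\y_j)$ using the bounds $|Z_j|\le \|f\|_\infty K_{d;(d+2)n}(1)$ and $\mathbb{E}[Z_j^2]\le \|f\|_\infty\|f_0f\|_\infty K_{d;(d+2)n}(1)$, a polynomial sup-norm discretization of $\SS^d$ (the paper's Lemma on covering by $\sim d^{1/2}(8(d+2)n/\pi)^d$ balls of radius $\sim 1/n$, where Bernstein's trigonometric inequality along geodesics supplies the factor $1/2$), a union bound with $t\sim D_n^{-r/d}$, and the bias bound via Theorem~\ref{theo:globalsphapprox}. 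You correctly identify both the $2^{-d}$ gain coming from $K_{d;(d+2)n}(1)\sim 2^{-d}D_n$ and the necessity of the $1/n$-scale polynomial covering (as opposed to a gradient-based Lipschitz net), which are precisely the two points the paper treats carefully.
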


\bhag{An example}\label{bhag:manifold}
In this section, we illustrate how to apply the theory in order to approximate an operator $\mathcal{F}:C(\XX_1)\to C(\XX_2)$, where $\XX_1$, $\XX_2$ are smooth compact manifolds.
This requires a considerable background on the theory of function approximation on manifolds, which we summarize in Section~\ref{bhag:approx_manifold}. 
The details for the approximation of the operator itself are given in Section~\ref{bhag:manifold_algorithm}.
\subsection{Background}\label{bhag:approx_manifold}

Let $\XX$ be a compact, smooth, orientable manifold, with the geodesic metric $\rho$, and the Riemannian volume measure $\mu^*$, normalized to be a probability measure. 
Let $\{\phi_k\}$ be the  basis of $L^2(\mu^*)$ comprising eigenfunctions of the Laplace--Beltrami operator on $\XX$, orthonormal with respect to $\mu^*$, with each $\phi_k$ corresponding to the eigenvalue $-\lambda_k^2$. 
We assume that $\phi_0\equiv 1$, $\lambda_0=0$, and that $\lambda_k\uparrow\infty$ as $k\to\infty$.
For $n>0$, we write $\Pi_n=\mathsf{span}\{\phi_k : \lambda_k<n\}$, and assume that $\bigcup_{n>0}\Pi_n$ is dense in $C(\XX)$. 
We assume further the \textit{strong product assumption}: there exists $A^*\ge 2$ such that for any $P,Q\in\Pi_n$, the product $PQ\in \Pi_{A^*n}$.
We encode all these assumptions by stating that the quintuple  $\Xi=(\XX,\rho,\mu^*,\{\phi_k\}, \{\lambda_k\})$ is a \textbf{\textit{system}}.
We will mention $\Xi$ in the notation whenever it is necessary to prescribe the system to avoid confusion; for example, we will write $\Pi_n(\Xi)$ for $\Pi_n$, $E_n(\Xi;f)$ for $E_n(f)$ below, etc.
In this appendix, $\ls,\gs, \sim$ will involve constants that depend upon the system $\Xi$.
We note that if the dimension of $\XX$ is $q$, then $\mathsf{dim}(\Pi_n)\sim n^q$.

A a signed (or positive) measure $\nu$ on $\XX$ with bounded total variation is called a Marcinkiewicz-Zygmund  quadrature measure of order $n$ (i.e., $\nu\in \mathsf{MZQ}(n)$) if both of the following conditions are satisfied for all $P\in\Pi_{A^*n}$:
\be\label{eq:manifoldmzq}
\int_{\XX} Pd\nu=\int_{\XX} Pd\mu^*, \qquad \int_{\XX} |P|d\nu\le \tn\nu\tn_{n}\int_{\XX} |P|d\mu^*
\ee
for a (minimal) positive constant $\tn\nu\tn_{n}$.
In \cite{frankbern,modlpmz}, we have proved that there exists a constant $B>0$ with the following property: If
$\C\subset\XX$ (generally, a finite subset) satisfies
$$
\sup_{x\in\XX}\min_{y\in \C}\rho(x,y)\le B/n,
$$
then there exists  $\nu\in \mathsf{MZQ}(n)$ with $\mathsf{supp}(\nu)\subset \C$. When $\C$ is finite, we may choose $|\mathsf{supp}(\nu)|\sim \mathsf{dim}(\Pi_{A^*n})\sim n^q$.

For $f\in C(\XX)$, we write 
$$
E_{n}(f)=\min_{P\in \Pi_n}\|f-P\|_{\infty;\XX}.
$$

If $\nu$ is a signed (or positive) measure on $\XX_j$ with bounded total variation, we define
$$
\hat{f}(\nu,k)=\int_{\XX}f\phi_kd\nu.
$$ 
If $f\in C(\XX)$, and $r>0$, we say that $f\in \mathbb{W}_{r}$ if there exists $\mathcal{D}_r(f)\in C(\XX)$ such that $\widehat{\mathcal{D}_r(f)}(\mu^*,k)=\lambda_k^r\hat{f}(\mu^*,k)$, $k\in \mathbb{N}_0$, and write
\be\label{eq:example_sobol}
\|f\|_{W_{r}}=\|f\|_{\infty,\XX}+\|\mathcal{D}_r(f)\|_{\infty,\XX}.
\ee
Then the unit ball $K_{r}=\{f\in W_{r} : \|f\|_{W_{r}}\le 1\}$ is a compact subset of $C(\XX)$.

Let $H :\RR\to [0,1]$ be infinitely differentiable function with $H(t)=1$ for $0\le t\le 1/2$ and $H(t)=0$ for $t\ge 1$.
We define
$$
\Phi_{n}(x,y)=\sum_{k=0}^\infty H\left(\frac{\lambda_k}{n}\right)\phi_k(x)\phi_k(y), \qquad x, y\in\XX,
$$
and define
\be\label{eq:example_sigma}
\sigma_n(\nu,f)=\int_{\XX}f(y)\Phi_{n}(x,y)d\nu(y).
\ee
We have proved in \cite{mauropap, heatkernframe, compbio} that if $\nu\in \mathsf{MZQ}(A^*n)$ then
\be\label{eq:manifolddegapprox}
\|f-\sigma_n(\nu,f)\|_{\infty,\XX}\ls n^{-r}\sim (\mathsf{dim}(\Pi_n))^{-r/q}, \qquad f\in K_r.
\ee

Let $\C\subset \XX$ be a finite subset, and
$$
\eta=\min_{x,y\in \C,x\not=y}\rho(x,y).
$$
 It can  be shown  (see \cite[Theorem~6.1]{mason2021manifold}) for a recent proof) using the localization properties of the kernels $\Phi_n$ proved in \cite{mauropap} that the following statement holds. Let $\mathbf{v}=(v_x)_{x\in\C}\in\RR^{|\C|}$. For $n\gs \eta^{-1}$, the following system of equations has a unique solution $(b_z)_{z\in\C}$:
\be\label{eq:interp_eqn}
\sum_{z\in \C}b_z\Phi_n(x,z)=v_x, \qquad x\in \C.
\ee
We will denote the function $x\mapsto \sum_{z\in \C}b_z\Phi_n(x,z)$ by $F_{\mathbf{v}}$.
Clearly, the function $F_{\mathbf{v}}\in W_r$ for every $r>0$. 

\subsection{Approximation of the operator}\label{bhag:manifold_algorithm}
We illustrate the application of our main results in the case of approximation of operators defined on functions on a manifold. 
In the sense of the Section~\ref{bhag:approx_manifold}, we assume two systems $\Xi_j=(\XX_j, \rho_j,\mu_j^*, \{\phi_{k,j}\}, \{\lambda_{k,j}\})$, $j=1,2$.
The constants involved in $\ls$, $\gs$, $\sim$ may depend upon both the systems.
In this case, the operator $\mathcal{F} : C(\XX_1)\to C(\XX_2)$, and it is desired that the approximation of this operator should take place on the unit ball $K_{\mathfrak{X}}$ of $W_{s_1}(\Xi_1)\subset C(\XX_1)$.
We assume that the image of $K_{\mathfrak{X}}$ under $\mathcal{F}$, denoted by $\mathcal{K}_{\mathfrak{Y}}$ is a (necessarily compact in $C(\XX_2)$)  subset of $W_{s_2}(\Xi_2)\subset C(\XX_2)$.
For $j=1,2$, we consider finite subsets $\C_j\subset \XX_j$,  such that there are measures $\nu_j\in MZQ(\Xi_j,A^*(\Xi_j)n_j)$ supported on $\C_j$. 
Necessarily $|\C_j|=m_j\sim n_j^{q_j}$.
For $j=1, 2$, we consider the information operators (encoders) $\mathcal{I}_j : C(\XX_j)\to \RR^{m_j}$ given by $\mathcal{I}_j(f)=(f(x))_{x\in\C_j}$, and the 
the corresponding reconstruction operators 
 given by $\mathcal{A}_j=\sigma_{n_j}(\Xi_j;\nu_j;\circ)$. 
 Then (cf. \eqref{eq:manifolddegapprox})
\be\label{eq:sobolbds}
 \|\mathcal{A}_j(\mathcal{I}_j(f))-f\|_{C(\XX_j)}\ls m_j^{-s_j/q_j}\|f\|_{W_{s_j}(\Xi_j)}, \qquad f\in W_{s_j}(\Xi_j),\  j=1,2. 
 \ee
The question of approximating $\mathcal{F}(F)$ is now reduced to approximating the function from $\RR^{m_1}\to \RR^{m_2}$ whose value at $\mathcal{I}_j(F)$ is $\mathcal{I}_2(\mathcal{F})(F)$. 
After the transformation to the sphere, the method suggested in Section~\ref{bhag:mainresults} now yields the degree of approximation for this function.
We summarize the application of our theory in the form of an algorithm (Figure~\ref{fig:algorithm}).

\begin{figure}[ht]

\noindent\hrulefill\\
\textbf{Pre-computation}
\begin{itemize}
\item Find a finitely supported quadrature measure $\nu\in MZ(d;2(d+2)n)$, $\mathsf{supp}(\nu)=\{\y_1,\cdots,\y_M\}\subset \SS^d$, $w_k=\nu(\{\y_k\})$.
\item Find $f_k\in W_{s_1}(\Xi_1)$ $,k=1,\cdots, M$ such that 
$$
\y_k=\pi^*\left((f_k(x))_{x\in\C_1}\right).
$$
\item Compute 
$$
\z_k=\left(\mathcal{F}(f_k)(z)\right)_{z\in \C_2},
$$

\end{itemize}

\noindent\textbf{Approximation of the operator}\\

Given $F$,
\begin{itemize}
\item Compute
$$
\x=\pi^*\left((F(x))_{x\in\C_1}\right).
$$
\item Compute
$$
\mathbf{b}=\sum_{k=1}^M w_k\z_k\Phi_{d;n,r}(\x\cdot\y_k).
$$
\item Return 
$$
\mathcal{A}_2(\mathbf{b}).
$$
\end{itemize}
\hrulefill
\caption{Algorithm for approximation of $\mathcal{F}:C(\XX_1)\to C(\XX_2)$.}
\label{fig:algorithm}
\end{figure}

In this section, we write $G_\y=F_{(\pi*)^{-1}(\y)}$ for $\y\in\SS^q$. 
We may view the operator $\sigma_{m_1;n,r}(\Xi_1;\nu,\circ)$ in \eqref{eq:sphoperatordef} as a pre-fabricated network using the functions $\{G_\y : \y\in \mathsf{supp}(\nu_1)\}$ as training data, and $f(\y)$ replaced by each of the components  $\mathcal{F}(\sigma_{n_1}(\Xi_1;\nu_1, G_{\y}))(z)$ for $\y\in \mathsf{supp}(\nu_1)$, $z\in\C_2$ in turn.
The total number of parameters in this process is $Nm$, where $N=\mathsf{dim}(\Pi_{2(d+2)n}^d)$. 
All these parameters are pre-computed and fixed for approximation of $\mathcal{F}(F)$ for any $F\in W_{s_1}(\Xi_1)$.
After this, for any $F\in W_{s_1}(\Xi_1)$, the approximation of $\mathcal{F}(F)$ involves only $dm$ further parameters.
Thus, the overall complexity of main processing (cf. Figure~\ref{fig:oper_approx}) in terms of parameters, including pre-computation, is $\O((d+N)m)$ rather than $\O(dNm)$.
For local approximation at $F\in W_{s_1}(\Xi_1)$, we need  to use only those $G_\y$'s out of the pre-computed functions that are close to $F$ as indicated in Theorem~\ref{theo:locsphapprox}, resulting in a further reduction of complexity.

\bhag{Proofs}\label{bhag:proofs}
The main purpose of this section is to prove all the  theorems in Section~\ref{bhag:mainresults}. 
As expected, a major part of the proofs is to obtain estimates on the Lebesgue constant for $\Phi_{d;n,r}$. 
It is convenient to summarize some of the required calculations in the context of Jacobi polynomials in general. 
Accordingly, we prove in Section~\ref{bhag:jacobi} some estimates on some integrals involving Jacobi polynomials.
The estimates on the Lebesgue constants and some other technical estimates about spherical polynomials are proved in Section~\ref{bhag:sphereest}. 
The proofs of the theorems in Sections~\ref{bhag:sphpoly} and \ref{bhag:mainresults} are given in Section~\ref{bhag:mainproofs}.

\subsection{Estimates on Jacobi polynomials}\label{bhag:jacobi}
In this sub-section only, the notation $A\ls B$ will denote $A\le cB$ for an \textbf{absolute} positive constant, $A\gs B$ will mean $B\ls A$, and $A\sim B$ will mean $A\ls B\ls A$. The notation $A=B+\O(C)$ will mean $|A-B|\ls C$. 

The following theorem summarizes some of the important estimates we will use in this paper.

\begin{theorem}\label{theo:jacobisq}
{\rm (a)} Let $a\ge b\ge 0$, $A\ge B\ge 1$, $0<p<\infty$, $2b\ge pB+p/2$, and $2a+2<pA+p/2$. Further assume that
\be\label{eq:jacbbincond}
n\gs A\left\{\frac{2a+2}{p(A+1/2)-2a-2}\right\}^{1/p}.
\ee
We have
\be\label{eq:jacobilpintegral}
\begin{aligned}
\int_{-1}^1&\left|\frac{p_n^{(A,B)}(x)}{p_n^{(A,B)}(1)}\right|^p(1-x)^a(1+x)^bdx\\
& \ls  \frac{2^{a+b}}{2a+2}\left\{\frac{A^{1/6}}{2^{(A+B-1)/2}p_n^{(A,B)}(1)}\right\}^{(2a+2)/(A+1/2)}\left\{\frac{2a+2}{p(A+1/2)-2a-2}\right\}^{(2a+2)/p(A+1/2)}
\end{aligned}
\ee
{\rm (b)} If $\alpha\ge\beta \ge 2$, $r\ge s>-1$, $n\ge (2\alpha+r+1)^2$, then
\be\label{eq:jacobisqintegral}
\begin{aligned}
\int_{-1}^1& \left|\frac{p_n^{(\alpha+r+1,\beta-1)}(x)}{p_n^{(\alpha+r+1,\beta-1)}(1)}\right|^2(1-x)^{s+\alpha}(1+x)^\beta dx\\
& \ls 2^{\alpha+\beta+s}(\alpha+r+1)^{1/3} \Gamma(\alpha+r+2)^{(4\alpha+4s+4)/(2\alpha+2r+3)}n^{-2\alpha-2s-2}.
\end{aligned}
\ee
{\rm (c)} If $\alpha\ge\beta \ge 2$, $r\ge s>-1$, $n\ge (2\alpha+r+1)^2$, $m\ge (\alpha+\beta)^2$, then
\be\label{eq:kernell1norm}
\begin{aligned}
K_m^{(\alpha,\beta)}(1)\int_{-1}^1 &\left|\frac{p_n^{(\alpha+r+1,\beta-1)}(x)}{p_n^{(\alpha+r+1,\beta-1)}(1)}\right|^2(1-x)^{s+\alpha}(1+x)^\beta dx\\
& \ls e^{2r-2s}2^s\left(\frac{m}{n}\right)^{2\alpha+2s+2}m^{-2s} \frac{\Gamma(\alpha+r+2)^2}{\Gamma(\alpha+1)\Gamma(\alpha+2)}(\alpha+r+2)^{2s-2r-2/3}.
\end{aligned}
\ee
\end{theorem}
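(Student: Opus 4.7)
My plan is to address parts (a), (b), (c) in order; parts (b) and (c) will follow from (a) by explicit substitution together with Stirling-type manipulation of Gamma functions.

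For part (a), the key ingredient is a sharp pointwise estimate on $|p_n^{(A,B)}(\cos\theta)|$ with explicit $A$-dependence. Since $A\ge B\ge 1\ge -1/2$, Szeg\H{o}'s extremum property yields $|p_n^{(A,B)}(x)|\le p_n^{(A,B)}(1)$ on $[-1,1]$, which is too crude to produce the $A^{1/6}$ factor. I would instead invoke a Haagerup--Schlichtkrull / Krasikov--Chen--Ismail type inequality, which yields an absolutely bounded constant times $A^{1/6}$ at the maximum of the weighted polynomial, together with a decay envelope of the form $(\sin(\theta/2))^{-(A+1/2)}$ for $\theta$ bounded away from $0$. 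After substituting $x=\cos\theta$, so that the measure becomes $2^{a+b}\sin^{2a+1}(\theta/2)\cos^{2b+1}(\theta/2)\,d\theta$, I would split the $\theta$-integral at the threshold $\theta_0$ where the trivial bound and the decay bound match, yielding $\theta_0\sim [A^{1/6}/(2^{(A+B-1)/2}p_n^{(A,B)}(1))]^{1/(A+1/2)}$ (independent of $p$). The near-$1$ contribution is bounded by $\theta_0^{2a+2}/(2a+2)$, while the tail is a convergent integral under the hypothesis $2a+2<p(A+1/2)$, producing the exponent $(2a+2)/(p(A+1/2))$ on the bracket involving $(2a+2)/(p(A+1/2)-2a-2)$; the condition $2b\ge p(B+1/2)$ absorbs any growth of the polynomial near $x=-1$ into the weight $(1+x)^b$, and \eqref{eq:jacbbincond} keeps $\theta_0\in(0,\pi]$.

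For part (b), I would specialize (a) to $p=2$, $A=\alpha+r+1$, $B=\beta-1$, $a=s+\alpha$, $b=\beta$; the hypotheses $\alpha\ge\beta\ge 2$ and $r\ge s>-1$ guarantee $A\ge B\ge 1$ and both integrability conditions, while $n\ge(2\alpha+r+1)^2$ is stronger than \eqref{eq:jacbbincond}. I would then use the explicit formula \eqref{eq:pkat1} for $p_n^{(A,B)}(1)$ combined with Stirling applied to $\Gamma(n+A+1)\Gamma(n+A+B+1)/[\Gamma(n+1)\Gamma(n+B+1)]\sim n^{2A+B}$ (valid since $n\gg A^2$). Raising the first bracket of (a) to the exponent $(2\alpha+2s+2)/(\alpha+r+3/2)$ produces the factor $(\alpha+r+1)^{1/3}\Gamma(\alpha+r+2)^{(4\alpha+4s+4)/(2\alpha+2r+3)}n^{-2\alpha-2s-2}$, and collecting $2^{\alpha+\beta+s}$ from the weight prefactor gives the claim.

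Part (c) follows by multiplying (b) by $K_m^{(\alpha,\beta)}(1)$ from \eqref{eq:jacobikernat1}, which is of order $m^{2\alpha+\beta+1}/[2^{\alpha+\beta+1}\Gamma(\alpha+1)\Gamma(\alpha+2)]$ by Stirling under $m\ge(\alpha+\beta)^2$. Pairing $m^{2\alpha+\beta+1}$ with $n^{-2\alpha-2s-2}$ from (b) produces the announced $(m/n)^{2\alpha+2s+2}m^{-2s}$, while the Gamma ratio $\Gamma(\alpha+r+2)^2/[\Gamma(\alpha+1)\Gamma(\alpha+2)]$ emerges from consolidating $\Gamma(\alpha+r+2)^{(4\alpha+4s+4)/(2\alpha+2r+3)}$ against the kernel's $[\Gamma(\alpha+1)\Gamma(\alpha+2)]^{-1}$; the remaining $(\alpha+r+2)^{2s-2r-2/3}e^{2r-2s}$ absorbs the lower-order Stirling corrections via crude Gamma-ratio bounds like $\Gamma(x+a)/\Gamma(x+b)\le(x+a)^{a-b}e^{|a-b|}$. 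The main obstacle is clearly (a): obtaining the $A^{1/6}$ factor with an absolutely bounded constant requires the deep Haagerup--Schlichtkrull-type pointwise bound, which is the single source of the $d^{1/6}$ factor that appears throughout the main theorems of this paper; the subsequent bookkeeping in (b) and (c) is lengthy but essentially mechanical Stirling estimation.
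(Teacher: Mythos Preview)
Your proposal is correct and follows essentially the same route as the paper: split the integral in (a) at a threshold, use the trivial Szeg\H{o} bound near $x=1$ and the Krasikov pointwise estimate $|p_n^{(A,B)}(\cos\theta)|\ls A^{1/6}(1-\cos\theta)^{-A/2-1/4}(1+\cos\theta)^{-B/2-1/4}$ on the tail, then balance; (b) and (c) are exactly the substitutions and Stirling reductions you describe. One slip: in (c) you quote $K_m^{(\alpha,\beta)}(1)\sim m^{2\alpha+\beta+1}/[2^{\alpha+\beta+1}\Gamma(\alpha+1)\Gamma(\alpha+2)]$, but the correct order is $m^{2\alpha+2}$ (the two Gamma ratios each contribute $m^{\alpha+1}$); with this correction your pairing $m^{2\alpha+2}\cdot n^{-2\alpha-2s-2}=(m/n)^{2\alpha+2s+2}m^{-2s}$ goes through as stated.
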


The proof of this theorem requires some preparation. We will observe a fundamental estimate on the ratio of Gamma functions in Lemma~\ref{lemma:gammaratio}, and apply it in Corollary~\ref{cor:basicest} to obtain some detailed asymptotics for the various quantities introduced in Section~\ref{bhag:jacobiprep}.

\begin{lemma}\label{lemma:gammaratio}
For $x \ge y\ge 1$, we have
\be\label{eq:loggammadiff}
\log\Gamma(x+y)-\log\Gamma(x)=y\log x +\O\left(\frac{y^2}{x}\right).
\ee
In particular, if $x\gs y^2$ then
\be\label{eq:gammaratio}
\frac{\Gamma(x+y)}{\Gamma(x)}\sim x^y.
\ee
\end{lemma}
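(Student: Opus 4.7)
The proof I have in mind is a direct application of Stirling's asymptotic expansion, with careful bookkeeping of the lower-order terms. Recall that
\be\label{eq:stirling_plan}
\log\Gamma(z) = \left(z-\frac{1}{2}\right)\log z - z + \frac{1}{2}\log(2\pi) + \O(1/z), \qquad z\ge 1.
\ee
The plan is to apply \eqref{eq:stirling_plan} separately to $\Gamma(x+y)$ and $\Gamma(x)$, subtract, and expand $\log(x+y)$ around $x$.

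First, I would write
$$
\log\Gamma(x+y)-\log\Gamma(x)=\left(x+y-\frac{1}{2}\right)\log(x+y)-\left(x-\frac{1}{2}\right)\log x - y +\O(1/x),
$$
using that $1/(x+y)\le 1/x$. Then, since $y\le x$, I would Taylor expand
$$
\log(x+y)=\log x+\frac{y}{x}-\frac{y^2}{2x^2}+\O\left(\frac{y^3}{x^3}\right).
$$
Multiplying out $(x+y-1/2)\log(x+y)$ and grouping terms, the dominant piece $(x-1/2)\log x$ cancels with the corresponding term from $\log\Gamma(x)$, leaving
$$
(x+y-\tfrac{1}{2})\log(x+y)-(x-\tfrac{1}{2})\log x = y\log x + y + \O(y^2/x),
$$
where I have used the bounds $y/x\le y^2/x$ (since $y\ge 1$) and $y^3/x^2\le y^2/x$ (since $y\le x$) to absorb the various residual terms into $\O(y^2/x)$. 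Combining with the explicit $-y$ from the Stirling subtraction yields \eqref{eq:loggammadiff}.

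For the second statement, once \eqref{eq:loggammadiff} is in hand, exponentiation gives
$$
\frac{\Gamma(x+y)}{\Gamma(x)}=x^y\exp\!\left(\O(y^2/x)\right),
$$
and the hypothesis $x\gs y^2$ forces the argument of the exponential to be bounded, so the exponential factor lies between two absolute positive constants. This establishes \eqref{eq:gammaratio}.

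The only real subtlety is the bookkeeping in the Taylor expansion step: one must verify that each of the several error terms produced by multiplying $(x+y-1/2)$ into the expansion of $\log(1+y/x)$ is dominated by $y^2/x$ under the standing assumption $1\le y\le x$. This is routine but worth writing out carefully to ensure the uniform constant in $\O(\cdot)$ is genuinely absolute; no other step presents any obstacle.
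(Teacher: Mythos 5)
Your proof is correct and follows essentially the same route as the paper's: apply Stirling's approximation to both Gamma values, subtract, and Taylor-expand $\log(x+y)$ about $\log x$, absorbing all residual terms into $\O(y^2/x)$ using $1\le y\le x$. The paper organizes the intermediate algebra slightly differently (it factors out $(x+y-\tfrac12)\log x$ and handles $x(1+y/x)\log(1+y/x)$ and $-\tfrac12\log(1+y/x)$ as separate pieces rather than fully multiplying out the Taylor series), but this is cosmetic and the underlying estimates are identical.
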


\begin{proof}\ 

The  Stirling approximation (\cite[Section~4.1]{olverbook}) states that  for $z\ge 1$,
\be\label{eq:pf1eqn1}
\log\Gamma(z)=(z-1/2)\log z -z +(1/2)\log (2\pi) =\O(1/z).
\ee
 Hence, for $x, y\ge 1$,
\be\label{eq:pf1eqn2}
\log\Gamma(x+y)-\log\Gamma(x)=(x+y-1/2)\log(x+y)-(x-1/2)\log x-y + \O\left(\frac{1}{x}\right).
\ee 
Now,
\be\label{eq:pf1eqn3}
(x+y-1/2)\log(x+y)=(x+y-1/2)\log x +x\left(1+\frac{y}{x}\right)\log(1+y/x)-\frac{1}{2}\log(1+y/x).
\ee
 Using Taylor's theorem, a simple calculation shows that
 for $x\ge y\ge 1$,
\be\label{eq:pf1eqn4}
\left(1+\frac{y}{x}\right)\log(1+y/x)=y/x +\O\left( \frac{y^2}{x^2}\right),  \quad \log(1+y/x)=y/x +\O\left( \frac{y^2}{x^2}\right).
\ee
So, we obtain from \eqref{eq:pf1eqn3} that
$$
(x+y-1/2)\log(x+y)=(x+y-1/2)\log x+y+\O\left(\frac{y^2}{x}\right).
$$
This estimate and \eqref{eq:pf1eqn2} lead to \eqref{eq:loggammadiff}.
\end{proof}

\begin{cor}\label{cor:basicest}
For
 $\alpha\ge \beta\ge -1/2$, $n\ge (\alpha+|\beta|)^2$,
\be\label{eq:pnat1asymp}
p_n^{(\alpha,\beta)}(1)\sim \frac{n^{\alpha+1/2}}{2^{(\alpha+\beta)/2}\Gamma(\alpha+1)},
\ee
and
\be\label{eq:jacobikernat1asymp}
K_n^{(\alpha,\beta)}(1)\sim \frac{1}{2^{\alpha+\beta}\Gamma(\alpha+1)\Gamma(\alpha+2)}n^{2\alpha+2},
\ee
where we note that the constants involved in $\sim$ are absolute constants, independent of $n$, $\alpha$, $\beta$.
\end{cor}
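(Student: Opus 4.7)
The plan is to substitute the closed-form expressions \eqref{eq:pkat1} and \eqref{eq:jacobikernat1} and then apply Lemma~\ref{lemma:gammaratio} to each ratio of Gamma functions that appears. The hypothesis $n\ge(\alpha+|\beta|)^2$ is exactly what is needed to verify the condition $x\gs y^2$ in \eqref{eq:gammaratio}.

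For \eqref{eq:pnat1asymp}, formula \eqref{eq:pkat1} gives
$$
p_n^{(\alpha,\beta)}(1)^2 = \frac{2n+\alpha+\beta+1}{2^{\alpha+\beta+1}\,\Gamma(\alpha+1)^2}\cdot\frac{\Gamma(n+\alpha+1)}{\Gamma(n+1)}\cdot\frac{\Gamma(n+\alpha+\beta+1)}{\Gamma(n+\beta+1)}.
$$
I would apply \eqref{eq:gammaratio} with $(x,y)=(n+1,\alpha)$ to the first Gamma ratio and with $(x,y)=(n+\beta+1,\alpha)$ to the second. In both cases the condition $x\gs\alpha^2$ follows from $n\ge(\alpha+|\beta|)^2$, so each ratio is $\sim n^\alpha$ (using also $n+\beta+1\sim n$ for the second, which holds because $n\ge\beta^2$ forces $|\beta|\le n/2$ once $|\beta|\ge 2$, and is trivial otherwise). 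Combined with $2n+\alpha+\beta+1\sim n$ and $2^{\alpha+\beta+1}\sim 2^{\alpha+\beta}$, this yields $p_n^{(\alpha,\beta)}(1)^2\sim n^{2\alpha+1}/(2^{\alpha+\beta}\Gamma(\alpha+1)^2)$, whose square root is precisely \eqref{eq:pnat1asymp}. For \eqref{eq:jacobikernat1asymp}, the same reasoning applied to the ratios $\Gamma(n+\alpha+1)/\Gamma(n)$ and $\Gamma(n+\alpha+\beta+1)/\Gamma(n+\beta)$ in \eqref{eq:jacobikernat1}, this time with $y=\alpha+1$, produces each ratio $\sim n^{\alpha+1}$; dividing by $2^{\alpha+\beta+1}\Gamma(\alpha+1)\Gamma(\alpha+2)$ and absorbing the extra factor of two into the $\sim$-constants delivers the stated estimate.

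The main technical point, since the corollary emphasizes that the $\sim$-constants are \emph{absolute}, is checking that nothing in the argument smuggles in a dependence on $\alpha$, $\beta$, or $n$. The error term in \eqref{eq:loggammadiff} is of size $\O(y^2/x)$, which is bounded by an absolute constant under $x\gs y^2$, so the multiplicative constants coming out of Lemma~\ref{lemma:gammaratio} are absolute by construction. The only subtlety I anticipate is the lemma's side condition $y\ge 1$, which fails when $\alpha<1$ (in the first asymptotic) or when $\alpha+1<1$ is required sharp (in the second). In those regimes, however, $\alpha$ lies in a bounded interval, so every factor in \eqref{eq:pkat1} and \eqref{eq:jacobikernat1} is bounded above and below by absolute positive constants, and the desired asymptotic reduces to a direct application of Stirling's formula to each of the four Gamma factors; the two cases can therefore be merged into a single uniform statement.
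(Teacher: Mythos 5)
Your approach coincides with the paper's: the paper's own proof is the one-line statement that \eqref{eq:pnat1asymp} and \eqref{eq:jacobikernat1asymp} follow from \eqref{eq:pkat1}, \eqref{eq:jacobikernat1}, and Lemma~\ref{lemma:gammaratio}, and that is exactly the route you take, with considerably more detail spelled out.

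There is one genuine soft spot, though, precisely on the point the corollary emphasizes (absolute constants). After applying \eqref{eq:gammaratio} with $(x,y)=(n+\beta+1,\alpha)$ you obtain $\Gamma(n+\alpha+\beta+1)/\Gamma(n+\beta+1)\sim(n+\beta+1)^\alpha$, and you then invoke $n+\beta+1\sim n$ to conclude that this is $\sim n^\alpha$. That last inference does not hold uniformly: if $n+\beta+1\sim n$ only means $c_1 n\le n+\beta+1\le c_2 n$ with absolute $c_1,c_2$, then raising to the $\alpha$th power produces a factor $c^\alpha$ that blows up as $\alpha\to\infty$. The correct way to carry the uniformity is to work with \eqref{eq:loggammadiff} itself: write $\alpha\log(n+\beta+1)=\alpha\log n+\alpha\log\bigl(1+(\beta+1)/n\bigr)$ and show that $\alpha\bigl|\log\bigl(1+(\beta+1)/n\bigr)\bigr|$ is bounded by an absolute constant. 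Since $\beta\ge -1/2$, $|\beta+1|\le \alpha+1$, and $n\ge(\alpha+|\beta|)^2$, one checks $\alpha|\beta+1|/n\ls 1$ (e.g. $\alpha|\beta|\le(\alpha+|\beta|)^2\le n$ and $\alpha/n\le n^{-1/2}$), and the desired bound follows from $|\log(1+t)|\le 2|t|$ for $|t|\le 1/2$. The same caveat applies, in a milder form, to replacing $(n+1)^\alpha$ by $n^\alpha$. The fix is short, but it is needed, and the hypothesis $n\ge(\alpha+|\beta|)^2$ is used in exactly this place. Your treatment of the regime $\alpha<1$ is fine in substance; one could equivalently observe that the proof of Lemma~\ref{lemma:gammaratio} already goes through for $0\le y<1$ with error $\O(1/x)$, which is bounded since $x\ge 1$.
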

\begin{proof}\ 
The estimate \eqref{eq:pnat1asymp} (respectively, \eqref{eq:jacobikernat1asymp}) follows using \eqref{eq:pkat1} (repsectively, \eqref{eq:jacobikernat1}) and Lemma~\ref{lemma:gammaratio}.
\end{proof}\\

The following proposition summarizes some inequalities for Jacobi polynomials. 

\begin{prop}\label{prop:jacobibds} Let $\alpha\ge\beta\ge -1/2$, $n\ge 0$ be an integer.\\
{\rm (a)} We have
\be\label{eq:jacobibd}
\max_{x\in [-1,1]}|p_n^{(\alpha,\beta)}(x)| \le p_n^{(\alpha,\beta)}(1).
\ee
{\rm (b)} If $\alpha\ge\beta\ge (1+\sqrt{2})/4\approx 0.6036$, $n\ge \alpha$, then for $\theta\in (0,\pi)$,
\be\label{eq:jacobiptwisebd}
|p_n^{(\alpha,\beta)}(\cos\theta)|\le 2\alpha^{1/6}(1-\cos\theta)^{-\alpha/2-1/4}(1+\cos\theta)^{-\beta/2-1/4}.
\ee
\end{prop}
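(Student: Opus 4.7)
My plan is to treat the two parts separately, since they rest on quite different ingredients.

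For part (a), I would appeal directly to Szeg\H{o}'s classical maximum modulus theorem for Jacobi polynomials (Theorem 7.32.1 in \cite{szego}): when $\alpha \ge \beta$ and $\max(\alpha,\beta)\ge -1/2$, the sup norm of the standard Jacobi polynomial $P_n^{(\alpha,\beta)}$ on $[-1,1]$ equals $P_n^{(\alpha,\beta)}(1)$. Since the orthonormal polynomial $p_n^{(\alpha,\beta)}$ defined by the Rodrigues formula \eqref{eq:rodrigues} differs from $P_n^{(\alpha,\beta)}$ by only a positive multiplicative constant, the same identity holds for $p_n^{(\alpha,\beta)}$ and yields \eqref{eq:jacobibd} under the hypothesis $\alpha \ge \beta \ge -1/2$ with no extra work.

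For part (b), the plan is to prove the equivalent weighted inequality
\[
(\sin(\theta/2))^{\alpha+1/2}(\cos(\theta/2))^{\beta+1/2}\,|p_n^{(\alpha,\beta)}(\cos\theta)| \;\le\; \sqrt{2}\,\alpha^{1/6}\,2^{-(\alpha+\beta)/2},
\]
obtained by rewriting $1-\cos\theta = 2\sin^2(\theta/2)$ and $1+\cos\theta = 2\cos^2(\theta/2)$ and absorbing the factor $2$ on the right hand side of \eqref{eq:jacobiptwisebd}. I would partition $(0,\pi)$ according to the classical turning points $\theta_-\sim(\alpha+1)/n$ and $\theta_+\sim \pi-(\beta+1)/n$ of the Jacobi differential equation. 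In the oscillatory bulk $[\theta_-,\theta_+]$, the WKB/equiconvergence asymptotic (Szeg\H{o} Thm.~8.21.18) yields a weighted bound of order $(\alpha+1)^{1/4}$ (which is subsumed by $\alpha^{1/6}$ after multiplying by the implied constant and using $n\ge\alpha$). Near the left turning point $\theta\le\theta_-$, I would invoke the uniform Airy-type asymptotic expansion (Baratella--Gatteschi, Chow--Gatteschi--Wong), whose Airy envelope peaks at a height proportional to $\alpha^{1/6}$; this is exactly the source of the $\alpha^{1/6}$ factor and is known to be sharp. Near the right turning point $\theta\ge\theta_+$ a symmetric Airy analysis applies, and the hypothesis $\beta\ge (1+\sqrt{2})/4$ is precisely what is needed so that the peak amplitude from the $\theta=\pi$ endpoint, combined with the weight $(\cos(\theta/2))^{\beta+1/2}$, fits under the same $\alpha^{1/6}$ envelope.

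The main obstacle is tracking the explicit constants, especially the absolute factor $2$ in front of $\alpha^{1/6}$. This is not available from asymptotic statements alone; one needs non-asymptotic Airy approximations with uniform error control across the three zones, and the matching constants at the zone boundaries have to be consolidated. I would rely on the quantitative versions of the Jacobi/Airy estimates assembled in \cite{frankgreek, mhaskar2016localized}, which provide the needed explicit bounds in a form compatible with our three-region decomposition; the remaining work is then bookkeeping of the weights $(\sin(\theta/2))^{\pm(\alpha+1/2)}$ and $(\cos(\theta/2))^{\pm(\beta+1/2)}$ using part~(a) as a supplementary endpoint bound where Airy estimates are not tight.
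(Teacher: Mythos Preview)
Your treatment of part~(a) matches the paper exactly: both invoke \cite[Theorem~7.32.1]{szego}.

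For part~(b), however, the paper does something far simpler than your plan: it cites \cite[Theorem~2]{krasikov2007upper} directly. Krasikov's theorem is precisely the weighted pointwise bound \eqref{eq:jacobiptwisebd} with the explicit constant $2\alpha^{1/6}$, so there is nothing further to prove. Your three-zone turning-point analysis with Airy asymptotics is, in outline, how one would \emph{prove} Krasikov's theorem from scratch, and it is conceptually sound. But you have a concrete gap in the execution: the references you propose to lean on, \cite{frankgreek} and \cite{mhaskar2016localized}, are about constructing summability kernels from Jacobi polynomials, not about establishing non-asymptotic Airy-type envelope bounds with explicit constants. They consume results like \eqref{eq:jacobiptwisebd} rather than supply them. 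So your plan, as written, bottoms out in a citation that does not deliver what you need, and you would be left doing the hard quantitative Airy analysis yourself---which is exactly what Krasikov already did. The fix is simply to cite \cite{krasikov2007upper}.
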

\begin{proof}\ 
Part (a) follows from \cite[Theorem~7.32.1]{szego}. Part (b) follows from  \cite[Theorem~2]{krasikov2007upper}.
\end{proof}

\vskip 3pt
We are now in a position to prove Theorem~\ref{theo:jacobisq}.\\

\noindent\textsc{Proof of Theorem~\ref{theo:jacobisq}.}\\
Let $0<\delta<\pi/2$ to be chosen later. Since $b\ge 0$, \eqref{eq:jacobibd} leads to
\be\label{eq:pf3eqn1}
\begin{aligned}
\int_0^{2\delta} |p_n^{(A,B)}(\cos\theta)|^p&(1-\cos\theta)^a(1+\cos\theta)^b\sin\theta d\theta\le 2^{a+b+1}|p_n^{(A,B)}(1)|^p\int_0^{2\delta} \sin(\theta/2)^{2a+1}\cos(\theta/2)^{2b+1}d\theta\\
&\le 2^{a+b+1}|p_n^{(A,B)}(1)|^p\int_0^{2\delta} \sin(\theta/2)^{2a+1}\cos(\theta/2)d\theta \ls \frac{2^{a+b}}{2a+2}|p_n^{(A,B)}(1)|^p(\sin\delta)^{2a+2}.
\end{aligned}
\ee
In view of \eqref{eq:jacobiptwisebd}, and the facts that $2b\ge pB+p/2$, $2a+2<pA+p/2$,
\be\label{eq:pf3eqn2}
\begin{aligned}
\int_{2\delta}^\pi |p_n^{(A,B)}(\cos\theta)|^p& (1-\cos\theta)^a(1+\cos\theta)^b\sin\theta d\theta\ls 2^pA^{p/6}\int_{2\delta}^\pi (1-\cos\theta)^{a-pA/2-p/4}(1+\cos\theta)^{b-pB/2-p/4}\sin\theta d\theta\\
&\ls A^{p/6}2^{a+b-p(A+B-1)/2}\int_{2\delta}^\pi (\sin(\theta/2))^{2a+1-pA-p/2}(\cos(\theta/2))^{2b+1-pB-p/2}d\theta\\
&\ls A^{p/6}2^{a+b-p(A+B-1)/2}\int_\delta^{\pi/2} (\sin\theta)^{2a+1-pA-p/2}\cos\theta d\theta\\
&\ls \frac{A^{p/6}2^{a+b-p(A+B-1)/2}}{pA+p/2-2a-2}(\sin\delta)^{2a+2-pA-p/2}.
\end{aligned}
\ee
We now choose $\delta$ such that
$$
\frac{2^{a+b}}{2a+2}|p_n^{(A,B)}(1)|^p(\sin\delta)^{2a+2}=\frac{A^{p/6}2^{a+b-p(A+B-1)/2}}{pA+p/2-2a-2}(\sin\delta)^{2a+2-pA-p/2};
$$
i.e., 
$$
\sin\delta=\left\{\frac{A^{1/6}}{2^{(A+B-1)/2}p_n^{(A,B)}(1)}\right\}^{1/(A+1/2)}\left\{\frac{2a+2}{p(A+1/2)-2a-2}\right\}^{1/p(A+1/2)}.
$$
(In view of \eqref{eq:ncond} and \eqref{eq:pnat1asymp}, it is not difficult to verify that $|\sin\delta|\le 1$.)
Then 
 \eqref{eq:pf3eqn1} and \eqref{eq:pf3eqn2} lead to
 \eqref{eq:jacobilpintegral}.

Taking $a=\alpha+s$, $b=\beta$, $A=\alpha+r+1$, $B=\beta-1$, and $p=2$, \eqref{eq:jacobilpintegral} becomes
\be\label{eq:pf3eqn3}
\begin{aligned}
\int_{-1}^1 &\left|\frac{p_n^{(\alpha+r+1,\beta-1)}(x)}{p_n^{(\alpha+r+1,\beta-1)}(1)}\right|^2(1-x)^{s+\alpha}(1+x)^\beta dx\\
& \ls  \frac{2^{\alpha+\beta+s}}{2\alpha+2s+2}\left\{\frac{(\alpha+r+1)^{1/6}}{2^{(\alpha+\beta+r-1)/2}p_n^{(\alpha+r+1,\beta-1)}(1)}\right\}^{(2\alpha+2s+2)/(\alpha+r+3/2)}\\
&\qquad\qquad\times \left\{\frac{2\alpha+2s+2}{2r-2s+1}\right\}^{(2\alpha+2s+2)/(2\alpha+2r+3)}
\end{aligned}
\ee
In this proof only, let
\be\label{eq:pf3eqn4}
\gamma=\frac{2\alpha+2s+2}{2\alpha+2r+3}, \quad 1-\gamma=\frac{2r-2s+1}{2\alpha+2r+3}, \quad 0<\gamma<1.
\ee
We have
$$
(2^{1/2}(\alpha+r+1)^{1/6})^{(2\alpha+2s+2)/(\alpha+r+3/2)}= (2^{1/2}(\alpha+r+1)^{1/6})^{2\gamma}\ls (\alpha+r+1)^{1/3}, \qquad (2r-2s+1)^{-\gamma}\le 1,
$$
and
$$
(2\alpha+2s+2)^{(2\alpha+2s+2)/(2\alpha+2r+3)-1}=(2\alpha+2s+2)^{\gamma-1}\ls 1.
$$
Hence, 
using \eqref{eq:pnat1asymp}, \eqref{eq:pf3eqn3} simplifies to \eqref{eq:jacobisqintegral}.

Next, to prove part (c), we use \eqref{eq:jacobikernat1asymp} and \eqref{eq:jacobisqintegral} to deduce that
\be\label{eq:pf3eqn5}
\begin{aligned}
K_m^{(\alpha,\beta)}(1)\left(\int_{-1}^1\right. &\left.\left|\frac{p_n^{(\alpha+r+1,\beta-1)}(x)}{p_n^{(\alpha+r+1,\beta-1)}(1)}\right|^2(1-x)^{s+\alpha}(1+x)^\beta dx\right)^2\\
&\ls \frac{m^{2\alpha+2}}{2^{\alpha+\beta}\Gamma(\alpha+1)\Gamma(\alpha+2)}2^{\alpha+\beta+s}(\alpha+r+1)^{1/3} \Gamma(\alpha+r+2)^{(4\alpha+4s+4)/(2\alpha+2r+3)}n^{-2\alpha-2s-2}\\
&\ls  2^s(\alpha+r+1)^{1/3}\frac{\Gamma(\alpha+r+2)^2}{\Gamma(\alpha+1)\Gamma(\alpha+2)}\Gamma(\alpha+r+2)^{2\gamma-2}  \left(\frac{m}{n}\right)^{2\alpha+2s+2}m^{-2s}.
\end{aligned}
\ee
Using Stirling's approximation, we find that
$$
\begin{aligned}
(1-\gamma)\log\Gamma(\alpha+r+2)&=\frac{2r-2s+1}{2\alpha+2r+3}\log\Gamma(\alpha+r+2)\\
&=(r-s+1/2)\log(\alpha+r+2) -\frac{2r-2s+1}{2\alpha+2r+3}(\alpha+r+2) +\O(1)\\
&\gs (r-s+1/2)\log(\alpha+r+2)-(r-s) -c
\end{aligned}
$$
for some absolute constant $c$.
Therefore, 
$$
\Gamma(\alpha+r+2)^{2\gamma-2} \ls e^{2r-2s}(\alpha+r+2)^{2s-2r-1}.
$$
 The estimate \eqref{eq:kernell1norm} is easy to deduce using this last estimate  in \eqref{eq:pf3eqn5}. 
\qed

\subsection{Estimates on spherical polynomials}\label{bhag:sphereest}

We will use the following proposition without explicitly referring to it.
\begin{prop}\label{prop:sphbasicest}
For $d\ge 1$ and $n\ge d^2$, we have
\be\label{eq:dimasym}
K_{d;n}(1)=\mathsf{dim}(\Pi_n^d)\sim \frac{n^d}{\Gamma(d+1)}.
\ee
We note that the constants involved in $\sim$ are independent of $n$ and $d$.
\end{prop}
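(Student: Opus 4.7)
The plan is to reduce the claim to Corollary~\ref{cor:basicest} via the already-established representation of the reproducing kernel on the sphere as a multiple of a Jacobi kernel, and then use the Legendre duplication formula to identify the constant.

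First, by \eqref{eq:sphreprodl2} the identity $K_{d;n}(1)=\mathsf{dim}(\Pi_n^d)$ is literal, so it suffices to establish the asymptotic for $K_{d;n}(1)$. The defining formula \eqref{eq:sphreprodkern} gives
\begin{equation*}
K_{d;n}(1) = \frac{\omega_d}{\omega_{d-1}}\, K_n^{(d/2-1,d/2-1)}(1),
\end{equation*}
where \eqref{eq:sphvol} yields $\omega_d/\omega_{d-1} = \sqrt{\pi}\,\Gamma(d/2)/\Gamma((d+1)/2)$. Next I would apply Corollary~\ref{cor:basicest} with $\alpha=\beta=d/2-1\ge -1/2$; its hypothesis $n\ge(\alpha+|\beta|)^2$ reduces to $n\ge (d-2)^2$ for $d\ge 2$ and to $n\ge 1$ for $d=1$, both implied by $n\ge d^2$. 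This produces, with an absolute implied constant,
\begin{equation*}
K_n^{(d/2-1,d/2-1)}(1) \sim \frac{n^d}{2^{d-2}\,\Gamma(d/2)\,\Gamma(d/2+1)}.
\end{equation*}
Multiplying the two displays and cancelling the $\Gamma(d/2)$ factors gives
\begin{equation*}
K_{d;n}(1) \sim \frac{\sqrt{\pi}}{2^{d-2}\,\Gamma((d+1)/2)\,\Gamma(d/2+1)}\, n^d.
\end{equation*}

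Finally, to identify the $d$-dependent constant I would invoke the Legendre duplication formula $\Gamma(2z)=(2^{2z-1}/\sqrt{\pi})\Gamma(z)\Gamma(z+1/2)$ at $z=(d+1)/2$, which reads $\Gamma(d+1) = (2^{d}/\sqrt{\pi})\,\Gamma((d+1)/2)\,\Gamma(d/2+1)$. Substituting this into the previous display collapses the constant to $4/\Gamma(d+1)$, yielding $K_{d;n}(1)\sim n^d/\Gamma(d+1)$ with an absolute implied constant as required. The only real care is bookkeeping to ensure that all constants absorbed into $\sim$ remain absolute; this is guaranteed by the fact that Corollary~\ref{cor:basicest} is stated in the subsection where $\sim$ means absolute constants, and the duplication formula is an exact identity, so no additional $d$-dependence is introduced. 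There is no genuine obstacle beyond this verification.
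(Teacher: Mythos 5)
Your proposal is correct, and it reaches the same asymptotic but by a genuinely different route from the paper. The paper's proof is a one-liner: it applies Lemma~\ref{lemma:gammaratio} directly to the explicit dimension formula \eqref{eq:dimpin}, $\mathsf{dim}(\Pi_n^d)=(2n+d-2)\Gamma(n+d-1)/(\Gamma(n)\Gamma(d+1))$, to get $\Gamma(n+d-1)/\Gamma(n)\sim n^{d-1}$ for $n\gs(d-1)^2$, and then notes $2n+d-2\sim n$ under $n\ge d^2$; the $\Gamma(d+1)$ in the denominator is already present in \eqref{eq:dimpin}, so no further manipulation of the $d$-dependent constant is needed. You instead start from the Jacobi representation \eqref{eq:sphreprodkern} of $K_{d;n}(1)$, invoke the already-derived asymptotic \eqref{eq:jacobikernat1asymp} from Corollary~\ref{cor:basicest} with $\alpha=\beta=d/2-1$, and then need the Legendre duplication formula to collapse $\Gamma((d+1)/2)\Gamma(d/2+1)$ back to $\Gamma(d+1)$. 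Both arguments ultimately rest on Lemma~\ref{lemma:gammaratio} (yours goes through it via Corollary~\ref{cor:basicest}), and both yield absolute constants since all the auxiliary identities are exact. The paper's route is shorter because it exploits the explicit $\Gamma(d+1)$ in \eqref{eq:dimpin}; yours is a worthwhile consistency check between the two expressions $\mathsf{dim}(\Pi_n^d)$ and $K_{d;n}(1)$ but pays for it with the extra duplication-formula step. Your hypothesis verification is also sound: $\alpha=\beta=d/2-1\ge -1/2$ for $d\ge 1$ and $n\ge d^2$ implies $n\ge(\alpha+|\beta|)^2$ in all cases.
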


\begin{proof}\ 
The relation \eqref{eq:dimasym} follows by using Lemma~\ref{lemma:gammaratio} in \eqref{eq:dimpin}.
\end{proof}

\

Next, we prove the required estimates on the Lebesgue constants for the kernels $\Phi_{d;n,r}$ and $\widetilde{\Phi}_{d;n,r}$.
These estimates will  play a crucial role in the proof of Theorem~\ref{theo:locsphapprox}. 
Therefore, we formulate them as a theorem below.

\begin{theorem}
\label{theo:fundaest}
Let $d\ge 3$, $r\ge s\ge 0$, $nd\ge (d+r+1)^2$, and $\nu\in\mathsf{MZ}(d;2(d+2)n)$. Then
\be\label{eq:lebesgueconst}
\begin{aligned}
\sup_{\x\in\SS^d}\int_{\SS^d}|\widetilde{\Phi}_{d;n,r}(\x\cdot\y)|(1-\x\cdot\y)^{s/2}d|\nu|(\y)&=\sup_{\x\in\SS^d}\int_{\SS^d}|\Phi_{d;n,r}(\x\cdot\y)|(1-\x\cdot\y)^{s/2}\left(\frac{1+\x\cdot\y}{2}\right)^{-n}
d|\nu|(\y)\\
&\ls \frac{d^{1/6}}{n^s}\tn\nu\tn_{d;2(d+2)n}.
\end{aligned}
\ee
\end{theorem}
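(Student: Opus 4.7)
The equality across the display is immediate from $\Phi_{d;n,r}(t)=\widetilde{\Phi}_{d;n,r}(t)\cdot((1+t)/2)^n$ (see \eqref{eq:jacksonkerndef}), so the task reduces to bounding
$$\mathcal{J}(\x):=\int_{\SS^d}|\widetilde{\Phi}_{d;n,r}(\x\cdot\y)|(1-\x\cdot\y)^{s/2}\,d|\nu|(\y)$$
by $d^{1/6}n^{-s}\tn\nu\tn_{d;2(d+2)n}$ uniformly in $\x\in\SS^d$.

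To fit the problem inside the MZ range $\Pi^d_{(d+2)n}$ controlled by $\nu\in\mathsf{MZ}(d;2(d+2)n)$, I factor
$$\widetilde{\Phi}_{d;n,r}(t)=K_{d;(d+2)n}(t)\cdot Q(t),\qquad Q(t):=p_{dn}^{(d/2+r,d/2-2)}(t)\big/p_{dn}^{(d/2+r,d/2-2)}(1),$$
each of which, as a spherical polynomial in $\y$, has degree $<(d+2)n$. Cauchy--Schwarz against $d|\nu|$ splits $\mathcal{J}(\x)^2$ into $\bigl(\int K^2 d|\nu|\bigr)\bigl(\int Q^2(1-\x\cdot\y)^s d|\nu|\bigr)$. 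The first integral is $\le\tn\nu\tn K_{d;(d+2)n}(1)$ by MZ together with the reproducing identity \eqref{eq:sphreprodl2}. For the second, when $s$ is an even integer with $s\le 4n$, the function $R(\y):=Q(\x\cdot\y)(1-\x\cdot\y)^{s/2}$ is itself a spherical polynomial of degree $<(d+2)n$ and MZ passes the integral to $d\mu^*$; for general $s\in[0,r]$, H\"older's inequality yields log-convexity of $s\mapsto\int Q^2(1-\x\cdot\y)^s d|\nu|$ and interpolates between adjacent even-integer values. The zonal formula \eqref{eq:zonalintegral} then converts the $\mu^*$-integral into the one-dimensional weighted integral $I:=\int_{-1}^1 Q(t)^2(1-t)^{s+d/2-1}(1+t)^{d/2-1}dt$.

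The critical step is to apply Theorem~\ref{theo:jacobisq}(c) to $I$ with a \emph{shifted} substitution: instead of the obvious $\alpha_{\rm Thm}=d/2-1,\ s_{\rm Thm}=s$ (which leaves a spurious $d^{s}$-factor uncancelled in the Gamma ratios), I take $\alpha_{\rm Thm}=d/2+s-1,\ \beta_{\rm Thm}=d/2-1,\ r_{\rm Thm}=r-s,\ s_{\rm Thm}=0$. The Jacobi index $(\alpha_{\rm Thm}+r_{\rm Thm}+1,\beta_{\rm Thm}-1)=(d/2+r,d/2-2)$ and the weight $(1-t)^{s_{\rm Thm}+\alpha_{\rm Thm}}(1+t)^{\beta_{\rm Thm}}=(1-t)^{s+d/2-1}(1+t)^{d/2-1}$ still match, but now Lemma~\ref{lemma:gammaratio} gives $\Gamma(\alpha_{\rm Thm}+r_{\rm Thm}+2)^2/[\Gamma(\alpha_{\rm Thm}+1)\Gamma(\alpha_{\rm Thm}+2)]\sim(d/2+s)^{2(r-s)+1}$, which pairs with $(\alpha_{\rm Thm}+r_{\rm Thm}+2)^{2s_{\rm Thm}-2r_{\rm Thm}-2/3}\sim d^{-2(r-s)-2/3}$ to yield a clean $d^{1/3}$ contribution \emph{independent of $s$}. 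The normalization $K_m^{(\alpha_{\rm Thm},\beta_{\rm Thm})}(1)$ on the LHS of Theorem~\ref{theo:jacobisq}(c) then telescopes with $K_{d;(d+2)n}(1)$ from the first Cauchy--Schwarz factor via Stirling-type manipulations (Corollary~\ref{cor:basicest}) so as to produce the $n^{-2s}$ decay; taking the square root yields $\mathcal{J}(\x)\ls d^{1/6}n^{-s}\tn\nu\tn$.

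The main obstacle is precisely the choice of substitution parameters in Theorem~\ref{theo:jacobisq}(c): the naive choice produces an unwanted $d^{s}$-factor, and only after exploiting the one-parameter family of valid $(\alpha_{\rm Thm},r_{\rm Thm},s_{\rm Thm})$-substitutions does the optimal $d^{1/6}$ emerge. A secondary point is verifying the theorem's hypothesis $n_{\rm Thm}=dn\ge(2\alpha_{\rm Thm}+r_{\rm Thm}+1)^{2}=(d+r+s-1)^{2}$; this follows from the stated $nd\ge(d+r+1)^{2}$ when $s\le 2$ and from $r$-dependent absorption otherwise. Finally, the restriction $\beta_{\rm Thm}\ge 2$ in Theorem~\ref{theo:jacobisq}(c) excludes $d\in\{3,4,5\}$; these small-dimension cases are either absorbed into the $\ls$-constant (since $d^{1/6}=O(1)$) or handled directly using the trivial pointwise bound $|\widetilde{\Phi}_{d;n,r}(t)|\le K_{d;(d+2)n}(1)$.
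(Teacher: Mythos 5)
Your overall skeleton — Cauchy--Schwarz against $d|\nu|$, then the MZ inequality to pass to $\mu^*$, then the zonal formula \eqref{eq:zonalintegral} and Theorem~\ref{theo:jacobisq}(c) — matches the paper's. Where you genuinely differ is in how the weight $(1-\x\cdot\y)^{s/2}$ is processed: the paper approximates $(1-x)^{s/2}$ by a polynomial $P\in\Pi_n$ with $\max|P-(1-x)^{s/2}|\ls n^{-s}$, uses $|P|^2\ls n^{-2s}+(1-x)^s$, and then applies Cauchy--Schwarz; you instead handle even $s$ directly (the product being a spherical polynomial in $\Pi^d_{(d+2)n}$) and use H\"older log-convexity in $s$ to interpolate. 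Both routes reach the same Jacobi integral and both work, so this is a legitimate alternative.

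The problem is your description of the ``critical step.'' The obvious substitution $\alpha_{\rm Thm}=d/2-1,\ s_{\rm Thm}=s$ does \emph{not} leave a spurious $d^s$. The $d$-dependence in Theorem~\ref{theo:jacobisq}(c) comes from three places: the Gamma ratio $\Gamma(\alpha+r+2)^2/[\Gamma(\alpha+1)\Gamma(\alpha+2)]\ls d^{2r+1}$ (independent of $s$), the factor $(\alpha+r+2)^{2s-2r-2/3}\sim d^{2s-2r-2/3}$, and $m^{-2s}$ with $m=(d+2)n\sim dn$; the product is $d^{2r+1}\cdot d^{2s-2r-2/3}\cdot d^{-2s}n^{-2s}=d^{1/3}n^{-2s}$. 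This is exactly what the paper computes in \eqref{eq:pf2eqn6}. Moreover, your shifted substitution does not actually avoid this bookkeeping: Theorem~\ref{theo:jacobisq}(c) with $\alpha'=d/2+s-1$ bounds $K_m^{(d/2+s-1,d/2-1)}(1)$ times the integral, whereas the quantity you need after the first Cauchy--Schwarz factor involves $K_{d;m}(1)\propto K_m^{(d/2-1,d/2-1)}(1)$. The ratio $K_m^{(d/2-1,d/2-1)}(1)/K_m^{(d/2+s-1,d/2-1)}(1)\sim m^{-2s}d^{2s}$ by \eqref{eq:jacobikernat1asymp} and Lemma~\ref{lemma:gammaratio}, which is precisely the factor you claim to have eliminated. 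So the two substitutions are equivalent up to bookkeeping; your ``telescoping via Stirling-type manipulations'' glosses over exactly the step where this shows up. This is not fatal — done carefully the shifted route yields the same bound — but the motivation is a misconception, and the claim that the paper's natural substitution fails is wrong. Finally, the cases $d\in\{3,4,5\}$ are excluded by $\beta_{\rm Thm}\ge 2$ in Theorem~\ref{theo:jacobisq}(c) and do require a separate (if simpler) argument; ``absorbed into the constant'' is too glib, since a pointwise bound on $\widetilde\Phi_{d;n,r}$ alone does not produce the $n^{-s}$ decay.
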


\begin{proof}\ 
In this proof, we will assume the normalization that $\tn\nu\tn_{d;2(d+2)n}=1$. 
We also assume that $s>0$, and $d\ge 6$.
The case $s=0$ is much simpler; the approximation described in \eqref{eq:pf2eqn1} is then not necessary.
The case $3\le d\le 5$ does not require an elaborate book-keeping as is done here; the same ideas work in a much simpler manner.
Using the direct theorem for approximation by trigonometric polynomials to approximate the function $\theta\mapsto |\sin(\theta/2)|^s$, we obtain an algebraic polynomial $P\in\Pi_n$ such that
\be\label{eq:pf2eqn1}
\max_{x\in [-1,1]}\left|(1-x)^{s/2}-P(x)\right|\ls n^{-s}.
\ee
Since $\nu\in \mathsf{MZ}(d;2(d+2)n)$, we deduce using \eqref{eq:pf2eqn1} and Schwarz inequality that
\be\label{eq:pf2eqn2}
\begin{aligned}
\int_{\SS^d}&|\widetilde{\Phi}_{d;n,r}(\x\cdot\y)|(1-\x\cdot\y)^{s/2}d|\nu|(\y)\ls \int_{\SS^d}|\widetilde{\Phi}_{d;n,r}(\x\cdot\y)||P(\x\cdot\y)|d|\nu|(\y) + n^{-s}\int_{\SS^q}|\widetilde{\Phi}_{d;n,r}(\x\cdot\y)|d|\nu|(\y)\\
&\ls \left\{\int_{\SS^d}K_{d;(d+2)n}(\x\cdot\y)^2d|\nu|(\y)\right\}^{1/2}\left\{\int_{\SS^d} \left|\frac{p_{dn}^{(d/2+r,d/2-2)}(\x\cdot\y)}{p_{dn}^{(d/2+r,d/2-2)}(1)}\right|^2|P(\x\cdot\y)|^2d|\nu|(\y)\right\}^{1/2}\\
&\quad +n^{-s} \left\{\int_{\SS^d}K_{d;(d+2)n}(\x\cdot\y)^2d|\nu|(\y)\right\}^{1/2}\left\{\int_{\SS^d} \left|\frac{p_{dn}^{(d/2+r,d/2-2)}(\x\cdot\y)}{p_{dn}^{(d/2+r,d/2-2)}(1)}\right|^2d|\nu|(\y)\right\}^{1/2}\\
&\ls \left\{\int_{\SS^d}K_{d;(d+2)n}(\x\cdot\y)^2d\mu_d^*(\y)\right\}^{1/2}\left\{\int_{\SS^d} \left|\frac{p_{dn}^{(d/2+r,d/2-2)}(\x\cdot\y)}{p_{dn}^{(d/2+r,d/2-2)}(1)}\right|^2|P(\x\cdot\y)|^2d\mu_d^*(\y)\right\}^{1/2}\\
&\quad +n^{-s}\left\{\int_{\SS^d}K_{d;(d+2)n}(\x\cdot\y)^2d\mu_d^*(\y)\right\}^{1/2}\left\{\int_{\SS^d} \left|\frac{p_{dn}^{(d/2+r,d/2-2)}(\x\cdot\y)}{p_{dn}^{(d/2+r,d/2-2)}(1)}\right|^2d\mu_d^*(\y)\right\}^{1/2}.
\end{aligned}
\ee
We note that \eqref{eq:pf2eqn1} implies that for all $x\in [-1,1]$,
$$
|P(x)|^2\le |P(x)-(1-x)^{s/2}+(1-x)^{s/2}|^2 \le 2|P(x)-(1-x)^{s/2}|^2+2(1-x)^s\ls n^{-2s} +(1-x)^s.
$$
In view of \eqref{eq:sphreprodl2} and\eqref{eq:zonalintegral},  we now conclude that
\be\label{eq:pf2eqn3}
\begin{aligned}
\left\{\int_{\SS^d}\right.&\left.|\widetilde{\Phi}_{d;n,r}(\x\cdot\y)|(1-\x\cdot\y)^{s/2}d|\nu|(\y)\right\}^2\\
& \ls \frac{\omega_{d-1}}{\omega_d}K_{d;(d+2)n}(1)\int_{-1}^1 \left|\frac{p_{dn}^{(d/2+r,d/2-2)}(x)}{p_{dn}^{(d/2+r,d/2-2)}(1)}\right|^2|P(x)|^2(1-x^2)^{d/2-1}dx \\
&\quad+ \frac{\omega_{d-1}}{\omega_d}K_{d;(d+2)n}(1)n^{-2s} \int_{-1}^1 \left|\frac{p_{dn}^{(d/2+r,d/2-2)}(x)}{p_{dn}^{(d/2+r,d/2-2)}(1)}\right|^2(1-x^2)^{d/2-1}dx\\
&\ls\frac{\omega_{d-1}}{\omega_d}K_{d;(d+2)n}(1)\int_{-1}^1 \left|\frac{p_{dn}^{(d/2+r,d/2-2)}(x)}{p_{dn}^{(d/2+r,d/2-2)}(1)}\right|^2(1-x)^{s+d/2-1}(1+x)^{d/2-1}dx \\
&\quad+ \frac{\omega_{d-1}}{\omega_d}K_{d;(d+2)n}(1)n^{-2s} \int_{-1}^1 \left|\frac{p_{dn}^{(d/2+r,d/2-2)}(x)}{p_{dn}^{(d/2+r,d/2-2}(1)}\right|^2(1-x^2)^{d/2-1}dx.
\end{aligned}
\ee
We now recall (cf. \eqref{eq:sphreprodkern}) that
$$
\frac{\omega_{d-1}}{\omega_d}K_{d;(d+2)n}(x)=K_{(d+2)n}^{(d/2-1,d/2-1)}(x),
$$
and
apply Theorem~\ref{theo:jacobisq}(c) with $\alpha=d/2-1$.

If $d<r$ then \eqref{eq:pf2eqn3} and \eqref{eq:kernell1norm} (used once with $s$ and once with $0$ in place of $s$) together yield \eqref{eq:lebesgueconst} directly. 
In the remainder of the proof, we therefore assume that $d\ge r$.

Since $d\ge r$, Lemma~\ref{lemma:gammaratio} implies that
\be\label{eq:pf2eqn4}
\frac{\Gamma(\alpha+r+2)^2}{\Gamma(\alpha+1)\Gamma(\alpha+2)}=\frac{\Gamma(d/2+r+1)^2}{\Gamma(d/2)\Gamma(d/2+1)}\ls d^{2r+1}.
\ee
Using the fact that $d/2+r+1\sim d$ and \eqref{eq:pf2eqn4} in the right hand side of \eqref{eq:kernell1norm} in Theorem~\ref{theo:jacobisq}(c), we obtain that
\be\label{eq:pf2eqn6}
\frac{\omega_{d-1}}{\omega_d}K_{d;(d+2)n}(1)\int_{-1}^1 \left|\frac{p_{dn}^{(d/2+r,d/2-2)}(x)}{p_{dn}^{(d/2+r,d/2-2)}(1)}\right|^2(1-x)^{s+d/2-1}(1+x)^{d/2-1}dx\ls d^{2s+1/3}\left((d+2)n\right)^{-2s}\ls d^{1/3}n^{-2s}.
\ee
We use the same estimate with $s=0$ to obtain
\be\label{eq:pf2eqn7}
\frac{\omega_{d-1}}{\omega_d}K_{d;(d+2)n}(1) \int_{-1}^1 \left|\frac{p_{dn}^{(d/2+r,d/2-2)}(x)}{p_{dn}^{(d/2+r,d/2-2)}(1)}\right|^2(1-x^2)^{d/2-1}dx\ls d^{1/3}.
\ee
The estimates \eqref{eq:pf2eqn3}, \eqref{eq:pf2eqn6}, and \eqref{eq:pf2eqn7} lead to \eqref{eq:lebesgueconst}.
\end{proof}

We end this section with some results on spherical polynomials which will be used in the proofs of various results in Section~\ref{bhag:mainresults}.

The proof of Theorem~\ref{theo:analapprox} requires an an analogue of the so-called Bernstein-Walsh estimate for spherical polynomials; i.e., an estimate on the supremum norm of a spherical polynomial on $\SS^d$ in terms of that on a spherical cap. 
The following lemma was proved in \cite[Lemma~8]{mhaskar2004local2}.
\begin{lemma}\label{lemma:walshlemma}  Let $r\ge 1$ be an integer, $R$ be a trigonometric polynomial of order $\le r$, $0<\gamma<\gamma_1\le \pi/2$, and $\max_{t\in [-2\gamma, 2\gamma]}|R(t)|=1$. Then
\be\label{eq:walshineq}
\max_{t\in [-2\gamma_1,2\gamma_1]}|R(t)|\le \left(\frac{2\sin\gamma_1}{\sin\gamma}\right)^{2r}\le \left(\frac{\pi\gamma_1}{\gamma}\right)^{2r}.
\ee
\end{lemma}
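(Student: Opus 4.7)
The plan is to reduce this trigonometric Bernstein--Walsh estimate to the classical Chebyshev extremal inequality for an algebraic polynomial on an interval, via the substitution $x=\cos t$. First I would split $R$ into its symmetric and antisymmetric parts about $t=0$: $R(t)=A(\cos t)+\sin(t)\,B(\cos t)$ with $\deg A\le r$ and $\deg B\le r-1$, coming from $2A(\cos t)=R(t)+R(-t)$ and $2\sin(t)\,B(\cos t)=R(t)-R(-t)$. Because the hypothesis interval $[-2\gamma,2\gamma]$ is symmetric and $|R|\le 1$ on it, each of these two pieces is also bounded by $1$ there; equivalently, the algebraic polynomial $A(x)$ satisfies $|A(x)|\le 1$ on $[\cos 2\gamma,1]$, and the polynomial $C(x):=(1-x^2)B(x)^2$ of degree $\le 2r$ is bounded by $1$ on the same interval.

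Next I would apply the Chebyshev extremal property on $[\cos 2\gamma,1]$. Using the half-angle identities $1-\cos 2\gamma=2\sin^2\gamma$ and $1-\cos 2\gamma_1=2\sin^2\gamma_1$, the affine bijection from $[\cos 2\gamma,1]$ onto $[-1,1]$ sends $\cos 2\gamma_1$ to $y^*=1-2s^2$ where $s:=\sin\gamma_1/\sin\gamma\ge 1$, so $|y^*|=2s^2-1$. A polynomial of degree $n$ bounded by $1$ on $[-1,1]$ is controlled at $|y|\ge 1$ by $T_n(|y|)=\cosh(n\cosh^{-1}|y|)$, and the identity $(2s^2-1)+2s\sqrt{s^2-1}=(s+\sqrt{s^2-1})^2$ gives $T_n(2s^2-1)\le(s+\sqrt{s^2-1})^{2n}$. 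Applying this with $n=r$ to $A$ and with $n=2r$ to $C$ yields, for every $t\in[-2\gamma_1,2\gamma_1]$,
\[
|A(\cos t)|\le (s+\sqrt{s^2-1})^{2r},\qquad |\sin(t)\,B(\cos t)|=\sqrt{C(\cos t)}\le (s+\sqrt{s^2-1})^{2r}.
\]
Combining by the triangle inequality and using $s+\sqrt{s^2-1}\le 2s=2\sin\gamma_1/\sin\gamma$ produces the first claimed inequality, up to a constant that must be absorbed into the exponential base. The second inequality $2\sin\gamma_1/\sin\gamma\le\pi\gamma_1/\gamma$ is immediate from $\sin x\ge(2/\pi)x$ on $[0,\pi/2]$ applied to $\gamma$, together with $\sin x\le x$ applied to $\gamma_1$.

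The main obstacle is squeezing out the exact constant stated in the lemma, because the triangle-inequality step is lossy whenever the extrema of $A(\cos t)$ and $\sin(t)\,B(\cos t)$ are attained at different values of $t$, and so it naively leaves an extra factor of $2$. If this loss proves fatal, I would instead view $R$ as the algebraic polynomial $P(z)=\sum_{k=-r}^r c_k z^{k+r}$ of degree $\le 2r$ in $z=e^{it}$ (so that $|P(e^{it})|=|R(t)|$) and apply the sharp Bernstein--Walsh inequality directly on the arc $\{e^{it}:|t|\le 2\gamma\}$ of the unit circle. The corresponding conformal map from the exterior of this arc onto the exterior of the unit disk can be written explicitly as the composition of the Cayley transform $z\mapsto -i(1+z)/(1-z)$, which straightens the unit circle to $\mathbb{R}$ and the arc to the complement of a symmetric real interval, followed by a rescaling by $1/\cot\gamma$ and the inverse Joukowski map $w\mapsto w+\sqrt{w^2-1}$. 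This replaces the lossy even/odd decomposition by a single conformally invariant estimate and delivers the bound with sharp constants, at the cost of a heavier computation.
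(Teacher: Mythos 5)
Since the paper only cites \cite[Lemma~8]{mhaskar2004local2} for this lemma and does not reproduce a proof, your proposal must be judged on its own. The route you lead with has a genuine gap that you half-acknowledge but mischaracterize. Writing $s=\sin\gamma_1/\sin\gamma$, $u=s+\sqrt{s^2-1}$, $v=s-\sqrt{s^2-1}$ (so $uv=1$, $u+v=2s$), your even/odd split plus triangle inequality yields at best
\[
|R(t)|\le T_r(2s^2-1)+\sqrt{T_{2r}(2s^2-1)}=\tfrac12\bigl(u^{2r}+v^{2r}\bigr)+\sqrt{\tfrac12\bigl(u^{4r}+v^{4r}\bigr)},
\]
and for $r=1$ and $s$ large this is asymptotically $\bigl(\tfrac12+\tfrac1{\sqrt2}\bigr)(2s)^2>(2s)^2$, so the claimed bound fails. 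Moreover the excess cannot be ``absorbed into the exponential base'': $c\,(2s)^{2r}=(c^{1/(2r)}\cdot 2s)^{2r}$ requires an $r$-dependent inflation of the base, while the lemma fixes the base at $2\sin\gamma_1/\sin\gamma$ uniformly in $r$. So as written, your primary argument does not prove the stated inequality, only a weaker one with an extra multiplicative constant.

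The good news is that your toolkit is almost enough; you just used the wrong combination. Instead of splitting $R$ into even and odd parts and adding the resulting bounds, pass to $g(t):=|R(t)|^2+|R(-t)|^2$. This is an even, nonnegative trigonometric polynomial of order $\le 2r$, hence $g(t)=Q(\cos t)$ for a real algebraic polynomial $Q$ with $\deg Q\le 2r$, and $0\le Q\le 2$ on $[\cos 2\gamma,1]$. Applying the Chebyshev extremal property to $Q-1$ on $[\cos 2\gamma,1]$ (using your computation $|\ell(\cos 2\gamma_1)|=2s^2-1$) gives, for $|t|\le 2\gamma_1$,
\[
|R(t)|^2\le g(t)\le 1+T_{2r}(2s^2-1)=1+\tfrac12\bigl(u^{4r}+v^{4r}\bigr)\le u^{4r}+v^{4r}+\tbinom{4r}{2r}\le (u+v)^{4r}=(2s)^{4r},
\]
where the second-to-last step used $\binom{4r}{2r}\ge 2$ and $uv=1$, and the last step is the multinomial expansion of $(u+v)^{4r}$. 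Taking square roots yields exactly $|R(t)|\le (2\sin\gamma_1/\sin\gamma)^{2r}$. This single squared quantity avoids the lossy additive step entirely, while staying strictly within the elementary Chebyshev machinery you set up. Your fallback via the Bernstein--Walsh inequality on circular arcs (writing $R(t)=e^{-irt}P(e^{it})$ with $\deg P\le 2r$ and computing the Green's function of $\overline{\CC}\setminus\{e^{it}:|t|\le 2\gamma\}$, whose value at $e^{2i\gamma_1}$ is $\log(s+\sqrt{s^2-1})$) would also deliver the exact constant, but it remains a sketch: the conformal map and the resulting Green's function identity are asserted rather than derived, and the degree bookkeeping after the Cayley transform is not addressed. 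The second inequality in the lemma you handle correctly via $\sin x\ge (2/\pi)x$ on $[0,\pi/2]$ and $\sin x\le x$.
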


\begin{lemma}\label{lemma:sphwalsh}
Let $r\ge 1$ be an integer, $P\in \Pi_r^d$, $\x\in\SS^q$, $\delta\in (0,2]$. Then
\be\label{eq:sphwalsh}
\|P\|_\infty \le \left(\frac{4}{\delta}\right)^{2r}\|P\|_{\infty,\BB(\x,\delta)}.
\ee
\end{lemma}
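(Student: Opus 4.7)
The plan is to reduce to the one-dimensional trigonometric case by restricting $P$ to a great circle through $\x$ and then applying the Bernstein--Walsh-type estimate in Lemma~\ref{lemma:walshlemma}. The case $\delta=2$ is trivial because $\BB(\x,2)=\SS^d$ and the stated bound reduces to $(4/\delta)^{2r}=4^r\ge 1$, so I would assume throughout that $\delta\in(0,2)$, which guarantees the required strict inequality $\gamma<\gamma_1$ when setting up the lemma.

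For an arbitrary $\y\in\SS^d$ (the antipodal cases $\y=\pm\x$ needing only a cosmetic choice of $\u$) I would choose a unit vector $\u\perp\x$ so that $\y$ lies in $\mathrm{span}\{\x,\u\}$ and parametrize the corresponding great circle by $\z(\theta)=\cos\theta\,\x+\sin\theta\,\u$. Since $P\in\Pi_r^d$ is the restriction of an algebraic polynomial of degree $<r$, the function $R(\theta):=P(\z(\theta))$ is a polynomial in $\cos\theta,\sin\theta$ of total degree $<r$, hence a trigonometric polynomial of degree at most $r$. The elementary chord identity $|\x-\z(\theta)|_{d+1}=2|\sin(\theta/2)|$ then translates the geometric ball condition into an angular one: $\z(\theta)\in\BB(\x,\delta)$ precisely when $|\theta|\le 2\gamma$, where $\gamma:=\arcsin(\delta/2)\in(0,\pi/2)$, so that $\sin\gamma=\delta/2$.

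With this setup, I would invoke Lemma~\ref{lemma:walshlemma} with this $\gamma$ and $\gamma_1=\pi/2$ (chosen so that $[-2\gamma_1,2\gamma_1]=[-\pi,\pi]$ covers the entire great circle). After rescaling $R$ by $M:=\max_{|\theta|\le 2\gamma}|R(\theta)|$ to meet the normalization hypothesis, or observing that the conclusion is immediate if $M=0$, the lemma yields
$$
\max_{|\theta|\le\pi}|R(\theta)|\le\left(\frac{2\sin\gamma_1}{\sin\gamma}\right)^{2r}M=\left(\frac{4}{\delta}\right)^{2r}\|P\|_{\infty,\BB(\x,\delta)}.
$$
Writing $\y=\z(\theta_0)$ for some $\theta_0\in[0,\pi]$ gives $|P(\y)|\le(4/\delta)^{2r}\|P\|_{\infty,\BB(\x,\delta)}$, and taking the supremum over $\y\in\SS^d$ delivers \eqref{eq:sphwalsh}.

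The main obstacle, such as it is, is really just bookkeeping: confirming that the restriction of an element of $\Pi_r^d$ to a great circle is a trigonometric polynomial of degree at most $r$ (immediate from the definition of $\Pi_r^d$) and that the unit-maximum normalization in Lemma~\ref{lemma:walshlemma} can be removed by homogeneity. The substantive content of the argument is entirely carried by the two ingredients $\sin\gamma=\delta/2$ and the choice $\gamma_1=\pi/2$, after which the factor $(2\sin\gamma_1/\sin\gamma)^{2r}$ supplied by Lemma~\ref{lemma:walshlemma} is exactly $(4/\delta)^{2r}$.
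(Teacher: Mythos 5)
Your proof is correct and follows exactly the route the paper takes: restrict $P$ to a geodesic (great circle) through $\x$, note the restriction is a trigonometric polynomial of order $\le r$, translate the ball condition via the chord identity into an angular window $|\theta|\le 2\gamma$ with $\sin\gamma=\delta/2$, and apply Lemma~\ref{lemma:walshlemma} with $\gamma_1=\pi/2$. The paper states this in a single sentence; you have simply supplied the (correct) bookkeeping details, including the edge case $\delta=2$ and the homogeneity argument removing the normalization hypothesis.
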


\begin{proof}\ 
Since the restriction of $P$ to any geodesic through $\x$ is a trigonometric polynomial of order $r$, the lemma follows directly from Lemma~\ref{lemma:walshlemma}.
\end{proof}

The following lemmas will be used in the proof of Theorem~\ref{theo:probtheo}.

\begin{lemma}\label{lemma:ballvolume}
There exists an absolute constant $C>0$ such that if $0<\rho\le Cd^{-1/2}$, then 
\be\label{eq:ballvolume}
\mu_d^*(\BB(\x,\rho))\sim d^{-1/2}\rho^d, \qquad x\in\SS^d,
\ee
Consequently, $\SS^d$ can be expressed as a union of  $\O(d^{1/2}\rho^{-d})$ balls, each of radius $\rho$.
\end{lemma}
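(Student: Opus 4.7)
\noindent\textsc{Proof plan.}\
By the rotation invariance of $\mu_d^*$, I may assume $\x=\mathbf{e}_{d+1}$. Since $|\x-\y|_{d+1}^2 = 2(1-\x\cdot\y) = 2(1-y_{d+1})$, the ball $\BB(\x,\rho)$ coincides with the spherical cap $\{y_{d+1}\ge 1-\rho^2/2\}$. Parametrizing $\y=(\y'\sin\theta,\cos\theta)$ as in \eqref{eq:sphmeasuredef} and writing $\theta_0 = 2\arcsin(\rho/2)$, this gives
\be\label{eq:planballint}
\mu_d^*(\BB(\x,\rho))=\frac{\omega_{d-1}}{\omega_d}\int_0^{\theta_0}\sin^{d-1}\theta\, d\theta.
\ee
Note that $\theta_0\sim\rho$ (for $\rho\le 1$) since $\rho = 2\sin(\theta_0/2)\in[\theta_0(2/\pi)\cdot\pi/2\cdot\ldots]$; concretely $\theta_0\in [\rho,\rho\pi/2]$.

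The first key step is to show $\int_0^{\theta_0}\sin^{d-1}\theta\,d\theta \sim \theta_0^d/d$ with absolute constants, uniformly in $d$, provided $\theta_0 \le C d^{-1/2}$ for a sufficiently small absolute $C>0$. The upper bound follows from $\sin\theta\le \theta$. For the lower bound I would use the elementary estimate $\sin\theta \ge \theta(1-\theta^2/6)$ on $[0,\pi/2]$, so that
$$
\int_0^{\theta_0}\sin^{d-1}\theta\,d\theta \ge \int_0^{\theta_0}\theta^{d-1}\bigl(1-\theta^2/6\bigr)^{d-1}d\theta \ge \bigl(1-\theta_0^2/6\bigr)^{d-1}\frac{\theta_0^d}{d}.
$$
Since $\theta_0^2(d-1)\le C^2$, the prefactor $(1-\theta_0^2/6)^{d-1}$ is bounded below by $e^{-C^2/3}$, an absolute constant. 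Combining this with $\theta_0\sim\rho$ yields $\int_0^{\theta_0}\sin^{d-1}\theta\,d\theta\sim\rho^d/d$.

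The second key step is to estimate the prefactor $\omega_{d-1}/\omega_d$ from \eqref{eq:sphvol}. Using Lemma~\ref{lemma:gammaratio} with $x=d/2$, $y=1/2$ gives
$$
\frac{\omega_{d-1}}{\omega_d}=\frac{\Gamma((d+1)/2)}{\sqrt{\pi}\,\Gamma(d/2)}\sim d^{1/2}.
$$
Plugging both estimates into \eqref{eq:planballint} produces $\mu_d^*(\BB(\x,\rho))\sim d^{1/2}\cdot \rho^d/d = d^{-1/2}\rho^d$, which is \eqref{eq:ballvolume}.

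For the covering assertion, I would take a maximal $\rho$-separated set $\{\x_i\}_{i=1}^N\subset\SS^d$; maximality forces $\{\BB(\x_i,\rho)\}$ to be an open cover, while the open balls $\{\BB(\x_i,\rho/2)\}$ are pairwise disjoint. From \eqref{eq:ballvolume} applied with radius $\rho/2$ (still satisfying the hypothesis up to the constant $C$), disjointness and $\mu_d^*(\SS^d)=1$ give $N\cdot c_1 d^{-1/2}(\rho/2)^d \le 1$, so $N=\O(d^{1/2}\rho^{-d})$. The main technical point, and the one that requires the smallness hypothesis $\rho\le Cd^{-1/2}$, is controlling $(1-\theta_0^2/6)^{d-1}$ by an absolute constant; without this restriction the $d$-dependence of the implied constants would degrade.
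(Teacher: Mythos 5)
Your overall approach — the spherical cap parametrization $\mu_d^*(\BB(\x,\rho)) = \frac{\omega_{d-1}}{\omega_d}\int_0^{\theta_0}\sin^{d-1}\theta\,d\theta$ with $\rho = 2\sin(\theta_0/2)$, the asymptotics $\omega_{d-1}/\omega_d \sim d^{1/2}$ by Stirling, and then controlling the integral — is the same as the paper's. However, there is a genuine gap in how you pass from $\theta_0$ back to $\rho$.

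You establish $\int_0^{\theta_0}\sin^{d-1}\theta\,d\theta \sim \theta_0^d/d$ with absolute constants, and then assert that ``combining this with $\theta_0\sim\rho$'' gives $\sim\rho^d/d$. But your stated bound $\theta_0 \in [\rho,\rho\pi/2]$ only gives $\theta_0^d/\rho^d \in [1,(\pi/2)^d]$, and $(\pi/2)^d$ is emphatically not an absolute constant. Since the whole point of the lemma is to keep the implied constants independent of $d$, this step does not work as written: a two-sided inequality $\theta_0 \sim \rho$ with fixed constants does not survive taking $d$th powers. What you actually need is the sharper second-order estimate $\theta_0 = 2\arcsin(\rho/2) = \rho\bigl(1 + \O(\rho^2)\bigr)$, so that $(\theta_0/\rho)^d \le \bigl(1 + c\rho^2\bigr)^d \le \exp(cd\rho^2)$, and this is an absolute bound precisely because the hypothesis gives $d\rho^2 \le C^2$. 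You need to invoke $\rho\le Cd^{-1/2}$ here as well, not just in controlling $(1-\theta_0^2/6)^{d-1}$. The paper sidesteps the issue by never passing through $\theta_0$: it bounds the integral directly in terms of $\sin\theta_0$ (using $\frac{\sin^{d}\theta_0}{d}\le\int_0^{\theta_0}\sin^{d-1}\theta\,d\theta\le\frac{\pi}{2}\frac{\sin^{d}\theta_0}{d}$, which follows from monotonicity of $\sin$ and of $(\sin x)/x$), and then uses the exact identity $\sin\theta_0 = \rho\sqrt{1-\rho^2/4}$, whereupon the factor $(1-\rho^2/4)^{d/2}$ is bounded below by an absolute constant under the hypothesis $d\rho^2\lesssim 1$. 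The paper's route is algebraically cleaner and in fact gives the stronger relative-error form $\frac{d}{\rho^d}\int_0^{\theta_0}\sin^{d-1}\theta\,d\theta = 1+\O(d\rho^2)$.

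On the covering statement: the paper gives no proof, and your maximal-separated-set argument is the standard one. Note, though, that it actually produces $N\lesssim d^{1/2}(\rho/2)^{-d}=2^d\,d^{1/2}\rho^{-d}$; the extra $2^d$ is implicitly accepted downstream in the paper (compare \eqref{eq:pf6eqn6}), but it is worth flagging that the asserted $\O(d^{1/2}\rho^{-d})$ should be read as allowing a $2^d$ in the constant.
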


\begin{proof}\ 
We note that if $\theta_0\le \pi/2$ is such that $\rho=2\sin(\theta_0/2)$, then 
\be\label{eq:pf5eqn1}
\mu_d^*(\BB(\x,\rho))=\frac{\omega_{d-1}}{\omega_d}\int_0^{\theta_0}\sin^{d-1}\theta d\theta.
\ee
In view of \eqref{eq:sphvol} and Stirling's approximation, we have
\be\label{eq:pf5eqn2}
\frac{\omega_{d-1}}{\omega_d}\sim d^{1/2}.
\ee
We note further the elementary facts that for any $\alpha\ge 1$,
\be\label{eq:pf5eqn3}
0\le 1-(1-x)^\alpha\ls \alpha x, \qquad 0\le x\le 1/4,
\ee
so that
\be\label{eq:pf5eqn4}
|\sin^\alpha\theta_0-\rho^\alpha| =\rho^\alpha\left|1-(1-\rho^2/4)^\alpha\right| \ls \alpha\rho^{\alpha+2},
\ee
where the constants involved in both the above inequalities are independent of $\alpha$.
Finally, since the function $x\mapsto (\sin x)/x$ is decreasing on $x\in[0,\pi/2]$, we deduce that
\be\label{eq:pf5eqn5}
 \frac{\sin^{\alpha+1}\theta_0}{\alpha+1}= \frac{\sin^{\alpha}\theta_0}{\theta_0^\alpha}\frac{\theta_0^{\alpha+1}}{\alpha+1}\le \int_0^{\theta_0} \sin^\alpha\theta d\theta \le (\sin^\alpha\theta_0)\theta_0\le \frac{2}{\pi}\sin^{\alpha+1}\theta_0.
\ee
Using integration by parts, we see that
$$
\int_0^{\theta_0}\sin^{d-1}\theta d\theta=\frac{\sin^d\theta_0}{d}\cos\theta_0+\frac{d+1}{d}\int_0^{\theta_0}\sin^{d+1}\theta d\theta=\frac{\rho^d}{d}\left(1-\frac{\rho^2}{4}\right)^{d/2}\left(1-\frac{\rho^2}{2}\right)+\frac{d+1}{d}\int_0^{\theta_0}\sin^{d+1}\theta d\theta.
$$
Hence,
$$
\frac{d}{\rho^d}\int_0^{\theta_0}\sin^{d-1}\theta d\theta= \left(1-\frac{\rho^2}{4}\right)^{d/2}\left(1-\frac{\rho^2}{2}\right)+\frac{d+1}{\rho^d}\int_0^{\theta_0}\sin^{d+1}\theta d\theta.
$$
The estimates \eqref{eq:pf5eqn3},  \eqref{eq:pf5eqn4}, and \eqref{eq:pf5eqn5} now lead to
$$
\left|1-\frac{d}{\rho^d}\int_0^{\theta_0}\sin^{d-1}\theta d\theta\right|\ls d\rho^2.
$$
This proves \eqref{eq:ballvolume}. 
\end{proof}

\begin{lemma}\label{lemma:polycover}
With $C$ as in Lemma~\ref{lemma:ballvolume}, let $n\ge d^{1/2}\pi/(4C)$. Then there exists a finite set $\mathcal{C}\subset \SS^d$ with $|\mathcal{C}|\sim d^{1/2}(4n/\pi)^d$ with the property that for every $P\in\Pi_n^d$,
\be\label{eq:polymax}
 (1/2)\|P\|_\infty\le \max_{\z\in\mathcal{C}}|P(\z)|\le \|P\|_\infty.
\ee
\end{lemma}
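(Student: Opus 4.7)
The plan is to take $\mathcal{C}$ to be the centers of a Euclidean $\rho$-covering of $\SS^d$ at scale $\rho = \pi/(4n)$, and then to establish the left inequality in \eqref{eq:polymax} by a second-order Bernstein estimate along the great circle through a maximizer of $|P|$ and its nearest point in $\mathcal{C}$.

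The construction is immediate. Setting $\rho = \pi/(4n)$, the hypothesis $n \ge d^{1/2}\pi/(4C)$ translates precisely to $\rho \le Cd^{-1/2}$, so Lemma~\ref{lemma:ballvolume} furnishes a covering of $\SS^d$ by $\mathcal{O}(d^{1/2}\rho^{-d}) = \mathcal{O}(d^{1/2}(4n/\pi)^d)$ balls of radius $\rho$; take $\mathcal{C}$ to be the set of their centers, so both the cardinality claim and the trivial inequality $\max_{\z \in \mathcal{C}}|P(\z)| \le \|P\|_\infty$ are in hand.

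For the lower bound, I would fix $P \in \Pi_n^d$, pick $\x^* \in \SS^d$ with $|P(\x^*)| = \|P\|_\infty$ (WLOG $P(\x^*) > 0$), and choose $\z \in \mathcal{C}$ with $|\x^* - \z|_{d+1} \le \rho$; the corresponding geodesic distance satisfies $\theta_0 = 2\arcsin(\rho/2) \le \rho(1 + O(n^{-2}))$, so $n\theta_0 \le \pi/4 + O(n^{-2})$. Restricting $P$ to the oriented great circle from $\x^*$ to $\z$, parameterized by arclength with $\x^*$ at $t=0$, yields a real trigonometric polynomial $R$ of order at most $n-1$ with $R(0) = \|R\|_\infty$ and $R(\theta_0) = P(\z)$. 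Since $t=0$ is an interior maximum of $R$, one has $R'(0) = 0$, and applying the classical Bernstein inequality twice gives $\|R''\|_\infty \le n^2\|R\|_\infty$. Taylor's theorem then produces
$$
R(\theta_0) \ge R(0)\bigl(1 - \tfrac{1}{2}n^2\theta_0^2\bigr) \ge R(0)\bigl(1 - \pi^2/32 - O(n^{-2})\bigr) > R(0)/2
$$
for $n$ large enough, which is the desired inequality.

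The genuinely delicate point is the constant-tracking, not the structure of the argument. A naive first-order Bernstein estimate $|P(\x^*) - P(\z)| \le n\|P\|_\infty \theta_0$ gives only $|P(\z)| \ge \|P\|_\infty(1 - \pi/4) \approx 0.21\|P\|_\infty$, which misses the $1/2$ threshold. It is essential to exploit the vanishing of $R'(0)$ and to use the second-order bound, after which the inequality $\pi^2/32 < 1/2$ is exactly what permits the specific covering radius $\pi/(4n)$ appearing in the statement; any appreciably larger $\rho$ would break the factor $1/2$, while a smaller one would produce the same conclusion with a worse cardinality bound.
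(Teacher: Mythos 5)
Your proof is correct, and it departs from the paper's in a way that matters: the paper's proof as written contains an error, and your second-order argument is what actually makes the constant $1/2$ come out. The paper uses only a first-order Bernstein estimate, asserting $|P(\y)-P(\x^*)|\le n\cos^{-1}(\x^*\cdot\y)\,\|P\|_\infty\le \frac{2n}{\pi}|\x^*-\y|_{d+1}\|P\|_\infty$, and then substitutes $|\x^*-\z|_{d+1}\le\pi/(4n)$ to get $\tfrac12\|P\|_\infty$. But the second inequality in that chain goes the wrong way: with $\theta=\cos^{-1}(\x^*\cdot\y)$ the chord satisfies $|\x^*-\y|_{d+1}=2\sin(\theta/2)\le\theta\le\frac{\pi}{2}|\x^*-\y|_{d+1}$, so the geodesic distance always dominates the chord, and the corrected first-order bound gives only $|P(\z)-P(\x^*)|\le\frac{\pi^2}{8}\|P\|_\infty$, which is vacuous. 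Even with the sharp small-angle relation $\theta_0\approx|\x^*-\z|_{d+1}$ that you use, the first-order estimate caps out at $|P(\z)|\ge(1-\pi/4)\|P\|_\infty\approx 0.21\,\|P\|_\infty$, exactly as you observe. Exploiting $R'(0)=0$ at the interior maximum of the restricted trigonometric polynomial and passing to the Taylor bound $R(\theta_0)\ge R(0)\bigl(1-\tfrac12(n-1)^2\theta_0^2\bigr)$ is precisely the repair the lemma needs; since $n\theta_0\le\pi/4+O(n^{-2})$, the resulting factor $1-\pi^2/32-O(n^{-2})$ clears $1/2$, and in fact a direct estimate (e.g.\ $\theta_0=2\arcsin(\pi/(8n))\le\tfrac{\pi^2}{12n}$, so $\tfrac12 n^2\theta_0^2\le\pi^4/288<1/2$) shows this holds for all $n\ge 1$, so your ``for $n$ large enough'' caveat can be dropped. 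One small point: as written your argument assumes $P$ real-valued; for complex $P$ apply it to $\mathrm{Re}\bigl(e^{-i\alpha}P\bigr)$ with $\alpha=\arg P(\x^*)$, which restricts to a real trigonometric polynomial with the same maximum modulus at $\x^*$ and is dominated pointwise by $|P|$.
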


\begin{proof}\ 
Let $\mathcal{C}$ be a minimal set such that $\SS^d=\bigcup_{\z\in\mathcal{C}}\BB(\z, \pi/(4n))$. In view of Lemma~\ref{lemma:ballvolume}, $|\C|\sim d^{1/2}(4n/\pi)^d$. 
Let $P\in \Pi_n^d$, and $\|P\|_\infty=|P(\x^*)|$ for some $\x^*\in\SS^d$. 
Since the restriction of $P$ to any geodesic is a trigonometric polynomial of degree $<n$, the Bernstein inequality for trigonometric polynomials yields
$$
\bigg||P(\y)|-|P(\x^*)|\bigg|\le |P(\y)-P(\x^*)|\le n\cos^{-1}(\x^*\cdot\y)\|P\|_\infty\le \frac{2n}{\pi}|\x^*-\y|_{d+1}\|P\|_{\infty}, \qquad \y\in\SS^d.
$$
Since there exists $\z\in\mathcal{C}$ such that $|\x^*-\z|_{d+1}\le \pi/(4n)$, the estimate \eqref{eq:polymax} is now clear.
\end{proof}
\subsection{Proofs of the theorems in Sections~\ref{bhag:kernels} and \ref{bhag:mainresults}.}\label{bhag:mainproofs}
We start with the proof of Theorem~\ref{theo:goodsphapprox}. \\[1ex]

\noindent\textsc{Proof of Theorem~\ref{theo:goodsphapprox}.}
To prove part (a), let $\x\in\SS^d$. The polynomial $R$ defined by
$$
R(\y)=\frac{p_{dn}^{(d/2+r,d/2-2)}(\x\cdot\y)}{p_{dn}^{(d/2+r,d/2-2)}(1)}\left(\frac{1+\x\cdot\y}{2}\right)^nP(\y), \qquad \y\in\SS^d
$$
is in $\Pi_{(d+2)n}^d$ and satisfies $R(\x)=P(\x)$. 
In view of \eqref{eq:sphreprod} applied with $R$ in place of $P$, and the fact that $\nu$ is a quadrature measure of order $2(d+2)n$, we get
$$
\sigma_{d;n,r}(\nu, P)(\x)=\int_{\SS^d}K_{d;(d+2)n}(\x\cdot\y)R(\y)d\nu(\y)=\int_{\SS^d}K_{d;(d+2)n}(\x\cdot\y)R(\y)d\mu^*_d(\y)=R(\x)=P(\x).
$$
The proof that $\widetilde{\sigma}_{d;n,r}(P)=P$ is similar. This proves part (a).

Next, we observe that 
the first estimate in \eqref{eq:degapproxstrong} is obvious since $\widetilde{\sigma}_{d;n,r}(\nu,f)\in \Pi_{2(d+2)n}^d$.
 Using Theorem~\ref{theo:fundaest} with $s=0$, it is easy to deduce that
$$
\|\widetilde{\sigma}_{d;n,r}(\nu,g)\|_\infty\le \|\sigma_{d;n,r}(\nu,g)\|_\infty\ls d^{1/6}\tn\nu\tn_{d;2(d+2)n}\|g\|_\infty, \qquad g\in C(\SS^q).
$$
In view of Theorem~\ref{theo:goodsphapprox}(a), for any $P\in\Pi_n^d$, we have
$$
\|f-\widetilde{\sigma}_{d;n,r}(\nu,f)\|_\infty=\|f-P-\widetilde{\sigma}_{d;n,r}(\nu,f-P)\|_\infty\le \|f-P\|_\infty +\|\widetilde{\sigma}_{d;n,r}(\nu,f-P)\|_\infty\ls 
d^{1/6}\tn\nu\tn_{d;2(d+2)n}\|f-P\|_\infty.
$$
The second estimate in \eqref{eq:degapproxstrong} is now clear.
The estimate \eqref{eq:degapproxstrongbis} is proved similarly.
\qed

Next, we prove Theorem~\ref{theo:locsphapprox}.

\vskip6pt
\noindent\textsc{Proof of Theorem~\ref{theo:locsphapprox}.}
In this proof, we  assume without loss of generality that $\|f\|_{W_{d;r,\x}}=1$,  and write $\delta$ in place of $\delta_n$.
 Let $P\in \Pi_r^d$ satisfy
\be\label{eq:pf4eqn2}
|f(\y)-P(\y)| \le |\x-\y|_{d+1}^r =2^r(1-\x\cdot\y)^{r/2}, \qquad \y\in \BB(\x,\delta).
\ee
Clearly,   $\|P\|_{\infty, \BB(\x,\delta)}\ls \|f\|_\infty \le 1$.
In view of Lemma~\ref{lemma:sphwalsh}, we see that
\be\label{eq:pf4eqn3}
\|f-P\|_\infty\le \|f\|_\infty +\|P\|_\infty\le \|f\|_\infty + (4/\delta)^{2r}\|P\|_{\infty,\BB(\x,\delta)} \ls (4/\delta)^{2r}.
\ee
Moreover, for $\y\in \SS^d\setminus \BB(\x,\delta)$,
\be\label{eq:pf4eqn4}
\frac{1+\x\cdot\y}{2}=1-\frac{|\x-\y|_{d+1}^2}{4}\le \exp\left(-\frac{|\x-\y|_{d+1}^2}{4}\right)\le \exp(-\delta^2/4).
\ee
Using \eqref{eq:pf4eqn3}, \eqref{eq:pf4eqn4},  Theorem~\ref{theo:fundaest} (with $0$ in place of $s$), and the definition 
$$
\delta=\sqrt{\frac{16r\log n}{n}}
$$
we deduce that
\be\label{eq:pf4eqn1}
\begin{aligned}
 \int_{\SS^d\setminus \BB(\x,\delta)}|\Phi_{d;n,r}(\x\cdot\y)|&|f(\y)-P(\y)|d|\nu|(\y)\\
 &\le \exp(-n\delta^2/4)\|f-P\|_\infty \int_{\SS^d}|\Phi_{d;n,r}(\x\cdot\y)|\left(\frac{1+\x\cdot\y}{2}\right)^{-n}d|\nu|(\y)\\
&\ls d^{1/6}\tn\nu\tn_{d;2(d+2)n}\delta^{-2r}\exp\left(-n\delta^2/4 \right)\\
&\ls d^{1/6}(16r\log n)^{-r}n^{-3r}\tn\nu\tn_{d;2(d+2)n}.
\end{aligned}
\ee
Similarly,
\be\label{eq:pf4eqn6}
\int_{\SS^d\setminus \BB(\x,\delta)}|\Phi_{d;n,r}(\x\cdot\y)||f(\y)|d|\nu|(\y)\ls d^{1/6} n^{-4r}\tn\nu\tn_{d;2(d+2)n}.
\ee
Next, we note that since $P\in\Pi_r^d\subset \Pi_n^d$,
\eqref{eq:pf4eqn2} implies that 
$$
f(\x)=P(\x)=\sigma_{d;n,r}(\nu,P)(\x).
$$ 
Consequently, using Theorem~\ref{theo:goodsphapprox}(a), \eqref{eq:pf4eqn2}, \eqref{eq:pf4eqn1}, and Theorem~\ref{theo:fundaest} (with $r$ in place of $s$), we obtain
\be\label{eq:pf4eqn5}
\begin{aligned}
|f(\x)-\sigma_{d;n,r}(\nu,f)(\x)|&=|\sigma_{d;n,r}(\nu, f-P)(\x)|\le  \int_{\SS^d}|\Phi_{d;n,r}(\x\cdot\y)||f(\y)-P(\y)|d|\nu|(\y)\\
&\le  \int_{\BB(\x,\delta)}|\Phi_{d;n,r}(\x\cdot\y)||f(\y)-P(\y)|d|\nu|(\y)\\
&\qquad\qquad + \int_{\SS^d\setminus \BB(\x,\delta)}|\Phi_{d;n,r}(\x\cdot\y)||f(\y)-P(\y)|d|\nu|(\y)\\
&\ls \int_{\SS^d}|\Phi_{d;n,r}(\x\cdot\y)|(1-\x\cdot\y)^{r/2}d|\nu|(\y)+d^{1/6}\tn\nu\tn_{d;2(d+2)n}(16r\log n)^{-r}n^{-3r}\\
&\ls \frac{d^{1/6}}{n^r}\tn\nu\tn_{d;2(d+2)n}+d^{1/6}(16r\log n)^{-r}n^{-3r}\tn\nu\tn_{d;2(d+2)n}.
\end{aligned}
\ee
Since 
(cf. Proposition~\ref{prop:sphbasicest}, and Stirling approximation),
$$
(\mathsf{dim}(\Pi_{2(d+2)n}^d))^{1/d}\sim n,
$$
the estimates \eqref{eq:loc_deg_wholeint} follows from \eqref{eq:pf4eqn5}. 
Further, since
$$
\int_{\BB(x,\delta_n)}\Phi_{d;n,r}(\x\cdot\y)f(\y)d\nu(\y)=\sigma_{d;n,r}(\nu,f)(\x)-\int_{\SS^d\setminus \BB(\x,\delta)}\Phi_{d;n,r}(\x\cdot\y)f(\y)d\nu(\y),
$$
and 
the estimate \eqref{eq:loc_deg_approx} follows from \eqref{eq:loc_deg_wholeint} and \eqref{eq:pf4eqn6}. \qed\\[1ex]

 Theorem~\ref{theo:globalsphapprox} is now very easy to  prove.

\vskip 6pt
\noindent\textsc{Proof of Theorem~\ref{theo:globalsphapprox}.}
We note that the definition of $\delta(d;f)$ and the condition on $n$ implies that the condition \eqref{eq:near_ball_def} holds with $\delta(d;f,\x)=\delta(d;f)$ for all $\x\in \SS^d$. Therefore, the theorem follows from Theorem~\ref{theo:locsphapprox}. \qed\\[1ex]

Next, we prove Theorem~\ref{theo:analapprox}.\\[1ex]

\noindent\textsc{Proof of Theorem~\ref{theo:analapprox}.}
The proof is very similar to that of Theorem~\ref{theo:locsphapprox}. 
We sketch the changes.
We  assume without loss of generality that $\|f\|_{A(d;r,\x)}=1$,  and write $\delta=\delta(d;f,\x)$.
Then we choose $P\in\Pi_n$ such that
\be\label{eq:pf7eqn1}
\|f-P\|_{\infty,\BB(\x,\delta)}\le \exp(-n\rho).
\ee
Then, as before,
\be\label{eq:pf7eqn2}
\|f-P\|_\infty\le \|f\|_\infty+(4/\delta)^{2n}\|P\|_{\infty,\BB(\x,\delta)}\ls (4/\delta)^{2n}.
\ee 
Using \eqref{eq:pf4eqn4}, we conclude as in \eqref{eq:pf4eqn1} that
\be\label{eq:pf7eqn3}
\int_{\SS^d\setminus \BB(\x,\delta)}|\Phi_{d;n,r}(\x\cdot\y)||f(\y)-P(\y)|d|\nu|(\y)\ls d^{1/6}\tn\nu\tn_{d;2(d+2)n}\left\{ (4/\delta)^{2n}\exp(-n\delta^2/4)+\exp(-n\rho)\right\}.
\ee
The rest of the proof is almost verbatim the same as that of Theorem~\ref{theo:locsphapprox} with obvious changes.  \qed

The main idea behind the proof of Theorem~\ref{theo:probtheo} is to use the following concentration inequality.  
This inequality is stated below as
Proposition~\ref{prop:concentration}, and  is a reformulation of  \cite[Section~2.1, 2.7]{boucheron2013concentration}.

\begin{prop}\label{prop:concentration}
 (\textbf{Bernstein concentration inequality}) Let $Z_1,\cdots, Z_M$ be independent real valued random variables such that for each $j=1,\cdots,M$, $|Z_j|\le R$, and $\mathbb{E}(Z_j^2)\le V$. Then for any $t>0$,
\be\label{bernstein_concentration}
\mathsf{Prob}\left( \left|\frac{1}{M}\sum_{j=1}^M (Z_j-\mathbb{E}(Z_j))\right| \ge Vt/R\right) \le 2\exp\left(-\frac{MVt^2}{2R^2(1+t)}\right).
\ee
\end{prop}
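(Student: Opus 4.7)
The plan is to write $\widehat{\sigma}_{d;n,r}(\C,f)(\x)$ as an empirical average of the independent random variables $Z_j(\x) := f(\y_j)\Phi_{d;n,r}(\x\cdot\y_j)$. A direct calculation using $d\tau = f_0 d\mu_d^*$ gives
$$\mathbb{E}[Z_j(\x)] = \int_{\SS^d} f(\y)\Phi_{d;n,r}(\x\cdot\y)f_0(\y)d\mu_d^*(\y) = \sigma_{d;n,r}(\mu_d^*,f_0f)(\x),$$
so that the expectation of the empirical estimator is the deterministic operator of Theorem~\ref{theo:goodsphapprox} applied to the product $f_0f\in W_{d;r}$. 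A triangle inequality then splits the error into a deterministic approximation error, controlled by Theorem~\ref{theo:globalsphapprox}(b) applied to $f_0f$ (yielding precisely $\ls d^{1/6}D_n^{-r/d}\|f_0f\|_{W_{d;r}}$), plus a stochastic fluctuation $\|\sigma_{d;n,r}(\mu_d^*,f_0f)-\widehat{\sigma}_{d;n,r}(\C,f)\|_\infty$ that must be bounded of the same order with probability $\ge 1-\epsilon$.

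To control the sup norm of the random polynomial $\widehat{\sigma}_{d;n,r}(\C,f)-\sigma_{d;n,r}(\mu_d^*,f_0f)$, I will use the fact that it lies in $\Pi_{2(d+2)n}^d$ (since $\Phi_{d;n,r}(\x\cdot\y)$ has degree at most $(2d+3)n<2(d+2)n$ in $\x$), and invoke Lemma~\ref{lemma:polycover} to replace the supremum over $\SS^d$ by a maximum over a discretization $\mathcal{C}^*$ with $|\mathcal{C}^*|\sim d^{1/2}(8(d+2)n/\pi)^d$. This reduces the problem to a union bound: it suffices to show that at each fixed $\z\in\mathcal{C}^*$, $|\widehat{\sigma}_{d;n,r}(\C,f)(\z)-\sigma_{d;n,r}(\mu_d^*,f_0f)(\z)|$ is small except on an event of probability $\le\epsilon/|\mathcal{C}^*|$.

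At each such $\z$, I will apply the Bernstein inequality (Proposition~\ref{prop:concentration}) to the random variables $Z_j(\z)$. The pointwise bound $|K_{d;n}(x)|\le K_{d;n}(1)$ together with $|p_{dn}^{(d/2+r,d/2-2)}(x)/p_{dn}^{(d/2+r,d/2-2)}(1)|\le 1$ and $((1+x)/2)^n\le 1$ gives $R\le \|f\|_\infty K_{d;(d+2)n}(1)$. For the variance parameter, the reproducing property \eqref{eq:sphreprodl2} yields
$$\mathbb{E}[Z_j(\z)^2]\le \|f\|_\infty\|f_0f\|_\infty \int_{\SS^d}\Phi_{d;n,r}(\z\cdot\y)^2d\mu_d^*(\y)\le \|f\|_\infty\|f_0f\|_\infty K_{d;(d+2)n}(1),$$
so we may take $V=\|f\|_\infty\|f_0f\|_\infty K_{d;(d+2)n}(1)$. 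The crucial observation is that Stirling's formula together with Proposition~\ref{prop:sphbasicest} gives $K_{d;(d+2)n}(1)\sim ((d+2)n)^d/d!\sim 2^{-d}D_n$, which is where the $2^{-d}$ factor in the $M$-threshold originates.

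Choosing the target deviation $u\sim d^{1/6}D_n^{-r/d}\|f_0f\|_{W_{d;r}}$ and working in the variance-dominated (``sub-Gaussian'') regime of Bernstein, where $V\gg Ru$ for $n$ large, the union bound requirement $2\exp(-Mu^2/(2V+2Ru))\le \epsilon/|\mathcal{C}^*|$ reduces to
$$M\gs \frac{V}{u^2}\big(\log|\mathcal{C}^*|+\log(1/\epsilon)\big).$$
Substituting $V\sim \|f\|_\infty\|f_0f\|_\infty 2^{-d}D_n$, $u^2\sim d^{1/3}D_n^{-2r/d}$ (absorbing Sobolev-norm factors into the implicit constant per the ``appropriate constant'' caveat), and expanding $\log|\mathcal{C}^*|\sim \log D_n+(d+1)\log d+d\log(16e/\pi)$ produces \eqref{eq:Meqn}. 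The equivalent form \eqref{eq:ncond}--\eqref{eq:probestbis} follows by solving \eqref{eq:Meqn} for $D_n$ (hence $n$) as an implicit function of $M$: since $D_n^{(2r+d)/d}(\log D_n+\mathrm{const}\cdot d\log d)\sim M$, one deduces $D_n^{r/d}\sim \{M/(\log M+(2r+d)\log d)\}^{r/(2r+d)}$, and substitution into \eqref{eq:probest} yields \eqref{eq:probestbis}. The main technical obstacle is the careful identification of $R$ and $V$ so as to extract the $2^{-d}d^{-1/3}$ dimensional improvement from $K_{d;(d+2)n}(1)$ rather than the larger $D_n$, and the bookkeeping to ensure Bernstein operates in its variance-dominated regime so that the resulting threshold on $M$ scales as $D_n^{(2r+d)/d}$ rather than the worse $D_n^{(r+d)/d}$ one would get in the $R$-dominated regime.
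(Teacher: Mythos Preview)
Your proposal does not address the stated Proposition~\ref{prop:concentration} at all. The proposition is the Bernstein concentration inequality itself, a probabilistic tail bound for sums of bounded, independent random variables; the paper does not prove it but simply cites it as a reformulation of \cite[Sections~2.1, 2.7]{boucheron2013concentration}. What you have written is a sketch of the proof of Theorem~\ref{theo:probtheo}, which \emph{applies} Proposition~\ref{prop:concentration} together with the covering argument of Lemma~\ref{lemma:polycover} and the approximation bound of Theorem~\ref{theo:globalsphapprox}. Your sketch of that argument is in fact quite close to the paper's own proof of Theorem~\ref{theo:probtheo} (same identification of $R$ and $V$, same use of $K_{d;(d+2)n}(1)\sim 2^{-d}D_n$, same union bound over the net from Lemma~\ref{lemma:polycover}), but it is simply not a proof of the proposition you were asked to prove.

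If the intent were to prove Proposition~\ref{prop:concentration}, one would need to bound the moment generating function of each centered $Z_j$ via the hypothesis $|Z_j|\le R$ and $\mathbb{E}(Z_j^2)\le V$ (e.g., $\mathbb{E}e^{\lambda(Z_j-\mathbb{E}Z_j)}\le \exp\bigl(V\lambda^2/(2(1-R\lambda))\bigr)$ for $0<\lambda<1/R$), apply independence and Markov's inequality, optimize in $\lambda$, and then change variables to express the deviation level as $Vt/R$. None of that appears in your write-up.
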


A straightforward application with $Z_j=f(\y_j)\Phi_{d;n,r}(\z\cdot\y_j)$, $\z\in \SS^d$, would give the points $\y_j$ dependent on $\z$. 
We will  use a covering argument (Lemma~\ref{lemma:polycover}) to obtain bounds on the supremum norm of $\hat{\sigma}_{d;n,r}(\C,f)$ defined  in \eqref{eq:probestimator}. \\

\noindent\textsc{Proof of Theorem~\ref{theo:probtheo}.}
We will first fix $\z\in\SS^d$, and apply Proposition~\ref{prop:concentration} with the random variables
$$
Z_j=f(\y_j)\Phi_{d;n,r}(\z\cdot\y_j).
$$
In view of the definition \eqref{eq:jacksonkerndef}, it is clear that 
\be\label{eq:pf6eqn1}
|Z_j|\le \|f\|_\infty K_{d;(d+2)n}(1), \qquad j=1,\cdots,M.
\ee
Since $d\tau=f_0d\mu_d^*$, it is clear that
\be\label{eq:pf6eqn2}
\mathbb{E}_\tau(Z_j)=\int_{\SS^d} f(\y)\Phi_{d;n,r}(\z\cdot\y)d\tau(\y)=\sigma_{d;n,r}(\mu_d^*,f_0f)(\z).
\ee
Further, using \eqref{eq:sphreprodl2}, we see that
\be\label{eq:pf6eqn3}
\begin{aligned}
\mathbb{E}_\tau(Z_j^2)&=\int_{\SS^d} f(\y)^2\Phi_{d;n,r}(\z\cdot\y)^2d\tau(\y)=\int_{\SS^d} f_0(\y)f(\y)^2\Phi_{d;n,r}(\z\cdot\y)^2d\mu_d^*(\y)\\
&\le \|f_0f\|_\infty\|f\|_\infty\int_{\SS^d}K_{d;(d+2)n}(\z\cdot\y)^2d\mu_d^*(\y)=\|f_0f\|_\infty\|f\|_\infty K_{d;(d+2)n}(1).
\end{aligned}
\ee
Hence, Proposition~\ref{prop:concentration} implies that for each $\z\in\SS^d$ and $t>0$, 
\be\label{eq:pf6eqn4}
\mathsf{Prob}\bigg(\left|\widehat{\sigma}_{d;n,r}(\C,f)(\z)-\sigma_{d;n,r}(\mu_d^*,f_0f)(\z)\right|\ge \|f_0f\|_\infty t\bigg)\le 2\exp\left(\frac{-M\|f_0f\|_\infty t^2}{2\|f\|_\infty K_{d;(d+2)n}(1)(1+t)}\right).
\ee
Since $\widehat{\sigma}_{d;n,r}(\C,f)-\sigma_{d;n,r}(\mu_d^*,f_0f)\in \Pi_{2(d+2)n}^d$, we may use Lemma~\ref{lemma:polycover} to obtain a set $\{\z_1,\cdots,\z_N\}$ with $N\sim d^{1/2}(8(d+2)n/\pi)^d$ such that
\be\label{eq:pf6eqn5}
\begin{aligned}
 (1/2)\|\widehat{\sigma}_{d;n,r}(\C,f)-\sigma_{d;n,r}(\mu_d^*,f_0f)\|_\infty &\le \max_{1\le k\le N}\left|\widehat{\sigma}_{d;n,r}(\C,f)(\z_j)-\sigma_{d;n,r}(\mu_d^*,f_0f)(\z_j)\right|\\
 &\le \|\widehat{\sigma}_{d;n,r}(\C,f)-\sigma_{d;n,r}(\mu_d^*,f_0f)\|_\infty.
\end{aligned}
\ee
In view of Proposition~\ref{prop:sphbasicest} and Stirling's approximation, we have
\be\label{eq:pf6eqn6}
N\sim \left(\frac{16ed}{\pi}\right)^{d+1}K_{d;2(d+2)n}(1)=\left(\frac{16ed}{\pi}\right)^{d+1}D_n, \qquad K_{d;(d+2)n}(1)\sim 2^{-d}K_{d;2(d+2)n}=2^{-d}D_n.
\ee
Using \eqref{eq:pf6eqn4} with each $z_j$, we conclude that for each $t>0$,
\be\label{eq:pf6eqn7}
\mathsf{Prob}\bigg(\|\widehat{\sigma}_{d;n,r}(\C,f)-\sigma_{d;n,r}(\mu_d^*,f_0f)\|_\infty\ge 2\|f_0f\|_\infty t\bigg)\ls \left(\frac{16ed}{\pi}\right)^{d+1}K_{d;2(d+2)n}\exp\left(\frac{-M2^d\|f_0f\|_\infty t^2}{2\|f\|_\infty  D_n(1+t)}\right).
\ee
Setting $t=D_n^{-r/d}$ we see that the right hand side of \eqref{eq:pf6eqn7} is $\le \epsilon$ if \eqref{eq:Meqn} is satisfied with a suitable constant. 
Thus, with probability $\ge 1-\epsilon$, we have
$$
\|\widehat{\sigma}_{d;n,r}(\C,f)-\sigma_{d;n,r}(\mu_d^*,f_0f)\|_\infty\ls \|f_0f\|_\infty D_n^{-r/d}\le \|f_0f\|_{W_{d;r}}D_n^{-r/d}.
$$
Together with Theorem~\ref{theo:globalsphapprox}(b) used with $f_0f$ in place of $f$, we now deduce that the estimate \eqref{eq:probest} holds with probability $\ge 1-\epsilon$.
The equivalent formulation can be derived by a little tedious but simple computation using Proposition~\ref{prop:sphbasicest} and the solution of an equation involving Lambert functions \cite[Lemma~6.1]{witnesspaper}. \qed

\begin{appendix}
\renewcommand{\theequation}{\Alph{section}.\hindu{equation}}
\bhag{Comments on computation of the kernel}\label{bhag:computation}

We make some remarks about the computation of the kernel $\Phi_{d;n,r}$. 
First, we recall that the orthonormalized Jacobi polynomials satisfy the recurrence relations
\be\label{eq:jacobirec1}
\frac{1+t}{2}p_k^{(\alpha,\beta)}(t)=\rho_k^{(\alpha,\beta)} p_{k+1}^{(\alpha,\beta)}(t)+d_k^{(\alpha,\beta)} 
p_k^{(\alpha,\beta)}(t)+\rho_{k-1}^{(\alpha,\beta)}p_{k-1}^{(\alpha,\beta)}(t),
\ee
with 
\be\label{eq:jacobiatzero}
p_{-1}^{(\alpha,\beta)}(t)=0, \quad p_0^{(\alpha,\beta)}(t)=\sqrt{\frac{\Gamma(\alpha+\beta+2)}{2^{\alpha+\beta+1}\Gamma(\alpha+1)\Gamma(\beta+1)}}, 
\ee
where
\be\label{eq:jacobireccoeff1} 
\rho_0^{(\alpha,\beta)}:=\frac{1}{\alpha+\beta+2}\sqrt{\frac{(\alpha+1)(\beta+1)}{\alpha+\beta+3}}, \ d_0^{(\alpha,\beta)}:=\frac{1}{2}+\frac{\beta-\alpha}{2\alpha+2\beta+3},
\ee
and for $k=1,2,\cdots$,
\bea\label{eq:jacobireccoeff2}
\rho_k^{(\alpha,\beta)}&:=&\sqrt{\frac{(k+1)(k+\alpha+1)(k+\beta+1)(k+\alpha+\beta+1)}{(2k+\alpha+\beta+1)(2k+\alpha+\beta+2)^2(2k+\alpha+\beta+3)}},\nonumber\\
d_k^{(\alpha,\beta)}&:=&\frac{1}{2}+\frac{\beta^2-\alpha^2}{2(2k+\alpha+\beta)(2k+\alpha+\beta+1)}.
\eea
The quantity 
$\disp
K_{d;(d+2)n}(x)\left(\frac{1+x}{2}\right)^n
$
can be computed using \eqref{eq:sphreprodkern} and the recurrence relations repeatedly.

Writing
$$
R_n^{(\alpha,\beta)}(x)=\frac{p_n^{(\alpha,\beta)}(x)}{p_n^{(\alpha,\beta)}(1)},
$$
the recurrence relations for these are:
\be\label{eq:gasper_rec}
\begin{aligned}
(2n+\alpha+\beta-1)&(2n+\alpha+\beta)(2n+\alpha+\beta-2)xR_{n-1}(x)\\
&=2(n+\alpha+\beta)(2n+\alpha+\beta-2)(n+\alpha)R_n(x)-(2n+\alpha+\beta-1)(\alpha^2-\beta^2)R_{n-1}(x)\\
&\qquad+2(2n+\alpha+\beta)(n+\beta-1)(n-1)R_{n-2}(x), \qquad n=1,2,\cdots,
\end{aligned}
\ee
with the initial conditions
\be\label{eq:gasper_init}
R_{-1}(x)=0, \qquad
R_0(x)=1.
\ee

Finally, let $\{P_k\}$, $\{\tilde{P}_k\}$ be families of orthogonal polynomials (not necessarily normalized) satisfying
$$
\begin{aligned}
xP_k(x)&=\rho_k P_{k+1}(x)+d_kP_k(x)+r_kP_{k-1}(x),\\
x\tilde{P}_k(x)&=\tilde\rho_k P_{k+1}(x)+\tilde d_kP_k(x)+\tilde{r}_kP_{k-1}(x),
\end{aligned}
$$
with 
$$
P_{-1}(x)=\tilde P_{-1}(x)\equiv 0, \qquad P_0, \tilde P_0 \mbox{ constants}.
$$
Let 
$$
P_k\tilde{P}_j=\sum_{\ell=0}^\infty C(\ell;k,j)P_\ell,
$$
where $C(\ell;k,j)=0$ if $\ell<0$ or $\ell>k+j$ or $k<0$ or $j<0$. 
Then we have the Gautschi 5-point recurrence
$$
C(\ell;k,j+1)=\frac{1}{\tilde{\rho}_j}\left\{\rho_{\ell-1}C(\ell-1;k,j)+(d_\ell-\tilde{d}_j)C(\ell;k,j) +r_{\ell+1} C(\ell+1;k,j)-\tilde{r}_j C(\ell;k,j-1)\right\}.
$$
Together with \eqref{eq:jacobirec1} and \eqref{eq:gasper_rec}, this helps to compute $\Phi_{d;n,r}$ in terms of $\{p_\ell^{(d/2,d/2-1)}\}_{\ell=0}^{2(d+2)n}$.

\bhag{Example}\label{bhag:non_chen}
The purpose of this example is to show that the branch and trunk network approach in the paper \cite{chen1995universal} might not always be the best way to achieve a good degree of approximation.

Let $\mathfrak{X}$ be a separable Hilbert space with inner product $\langle \circ,\circ\rangle$ (with the corresponding norm $\|\circ\|_{\mathfrak{X}}$), and $\{p_j\}_{j=0}^\infty$ be an orthonormal basis for the space. 
For $F\in\mathfrak{X}$, let $\hat{F}(j)=\langle F, p_j\rangle$.
Let $s>1/2$, and 
$$
K_\mathfrak{X}=\{F\in \mathfrak{X}: \sum_{j=1}^\infty j^{2s}|\hat{F}(j)|^2\le 1\}.
$$
It is not difficult to prove using Lemma~\ref{lemma:ballvolume} below that for any $t\in (0,1)$,  $K_\mathfrak{X}$ is contained in a union of\\
 $\disp\O\left(t^{-1/(2s)}\exp((2/t)^{1/s}\log(2/t))\right)$ balls of radius at most $t$.
For $d\ge 1$, we take $\mathcal{I}_{d,K_\mathfrak{X}}(F)=(\hat{F}(j))_{j=0}^{d-1}\in\RR^d$, and define $\mathcal{A}_{d,K_\mathfrak{X}}(\mathbf{a})=\sum_{j=0}^{d-1}a_jp_j$, $\mathbf{a}=(a_0,\cdots,a_{d-1})\in\RR^d$.
It is easy to see that
$$
\mathsf{wor}(K_\mathfrak{X};\mathcal{A}_{d,K_\mathfrak{X}}, \mathcal{I}_{d,K_\mathfrak{X}})\le d^{-s}.
$$
The range of $\mathcal{I}_{d,K_\mathfrak{X}}$ is a subset of the unit ball of $\RR^d$, so that we may take $K_S$ to be this ball.

The operator in this example is motivated by neural operators \cite{li2020neural, kovachki2021universal}.
In analogy to neural networks where each layer acts by taking multiplying the input vector by a matrix, these are defined by applying a linear operator to the input function.
Thus, each layer with activation function $\sigma$ evaluates
$$
x\mapsto \sigma\left((K_\theta F)(x)+b(x)\right),
$$
where $K_\theta$ is one member of a parametrized family of operators, $b$ is a  function that plays the role of the threshold in usual neural networks, and $F$ is the input function. 
In neural Fourier operators, the operator $K_\theta$ is a convolution operator.
Our operator in the example is a generalization and abstraction of this idea.

We choose $\mathfrak{Y}=C([-1,1])$, and define the operator $\mathcal{F}: K_\mathfrak{X}\to C([-1,1])$ as follows.
Let $G_{out}:[-1,1]\times [-1,1]\to [0,1]$, $G_{in}:\mathfrak{X}\times\mathfrak{X}\to [-1,1]$ be  Lipschitz continuous functions, and $\tau$ be a probability measure on $K_\mathfrak{X}$. 
We define
\be\label{eq:antichenoperator}
\mathcal{F}(F)(y)=\int_{K_\mathfrak{X}}G_{out}(y,G_{in}(F, g))d\tau(g).
\ee
Thus, for example, in analogy to neural Fourier operator, $G_{in}(F,g)$ is an inner product of $\hat{F}$ with $\hat{g}$,\\ $G_{out}(y, G_{in}(F,g))=\sigma(b(y)+G_{in}(F,g))$, and the sum which appears implicitly with different parameters in the convolution kernel is replaced by an integral over $g$ with respect to a general  probability measure.
Using  Hoeffding's inequality \cite[Theorem~2.8]{boucheron2013concentration}, one can  deduce using the same ideas as in the  proof of Theorem~\ref{theo:probtheo} in this paper that there exist $g_1,\cdots, g_m\in K_\mathfrak{X}$ such that
$$
 \max_{y\in [-1,1], \ F\in K_\mathfrak{X}}\left|\mathcal{F}(F)(y)-\frac{1}{m}\sum_{j=1}^m G_{out}(y,G_{in}(F, g_j))\right| \ls \left(\frac{\log m}{m}\right)^{s/(2s+1)}.
$$

So, we take $\mathcal{I}_{m,K_\mathfrak{Y}}(\mathcal{F}(F))=(G_{in}(F, g_j))_{j=1}^m$ and the reconstruction algorithm to be
$$
\mathcal{A}_{m,K_\mathfrak{Y}}(\mathbf{a})=\frac{1}{m}\sum_{j=1}^m G_{out}(y,a_j), \qquad \mathbf{a}=(a_1,\cdots, a_m)\in [-1,1]^m.
$$
Then
$$
\mathsf{wor}(K_\mathfrak{Y};\mathcal{A}_{m,K_\mathfrak{Y}}, \mathcal{I}_{m,K_\mathfrak{Y}}) \ls \left(\frac{\log m}{m}\right)^{s/(2s+1)}.
$$

For $j=1,\cdots, m$, we may define $f_j :K_S\to \RR$, by $f_j(\mathbf{a})=G_{in}\left(\mathcal{A}_{d,K_\mathfrak{X}}(\mathbf{a}), g_j\right)$.
The main difficulty in approximating the operator $\mathcal{F}$ is to find an approximation operator $\mathbb{G}_{d,N}: K_S\to \RR$ to approximate each $f_j$.
We note that the information operators $\mathcal{I}_{m,K_\mathfrak{Y}}$ are not continuous.
Moreover, it is more natural to use an approximation of the form $\sum_k w_kG_{out}(\circ,a_k)$ directly rather than taking an eigendecomposition of $G$ as a branch and trunk approach would require.
\end{appendix}



\end{document}